\documentclass[11pt,reqno]{amsart}
\usepackage[a4paper,top=2.5cm, bottom=2.5cm,left=2.5cm, right=2.5cm,twoside]{geometry}

\usepackage{amsthm}
 \usepackage{amsmath}
 \usepackage{amsfonts}
 \usepackage{amssymb}
 \usepackage{amscd}
 \usepackage{url}
 \usepackage[T1]{fontenc}
 \usepackage[utf8x]{inputenc}
 \usepackage[english]{babel}
 \usepackage[mathscr]{eucal}
 \usepackage{lmodern}
 \usepackage{bera}
 \usepackage{float} 
 \usepackage{bold-extra}
 \usepackage{textcomp}
 \usepackage{graphicx}
 \usepackage{caption}
\usepackage{subcaption} 
\usepackage{multicol}
 \usepackage[dvipsnames,table]{xcolor}
 \usepackage{mathtools} 
 \usepackage{enumitem}
 \usepackage{tikz} 
 \usetikzlibrary{positioning, arrows.meta, decorations.markings, shapes.geometric, calc}
 \usepackage{tikz-cd}
 \usepackage{wasysym}
 \usepackage{todonotes}
 \usepackage{fancybox}
 \usepackage{fancyhdr}
 \usepackage{breqn}
 \usepackage[nodisplayskipstretch]{setspace}
\usepackage{hyperref}
\hypersetup{
	colorlinks=true,
	linkcolor=blue,
	filecolor=darkmagenta,      
	urlcolor=black,
	citecolor=red
} 

 \theoremstyle{definition}  
  \newtheorem{defn}{Definition}[section]
  \newtheorem{eg}[defn]{Example}

   \newtheorem{rmk}[defn]{Remark}

  \theoremstyle{plain}  
  \newtheorem{thm}[defn]{Theorem}
  \newtheorem{lem}[defn]{Lemma}
  \newtheorem{prop}[defn]{Proposition}
  \newtheorem{cor}[defn]{Corollary}

  \theoremstyle{remark}

  \newcommand{\la}{\langle}
  \newcommand{\ra}{\rangle}

 \numberwithin{equation}{section}
 \allowdisplaybreaks[4] 
 \setlist[enumerate]{font=\upshape,noitemsep, topsep=0pt} 
 \setlist[itemize]{noitemsep, topsep=0pt}

\makeatletter
\@namedef{subjclassname@2020}{\textup{2020} Mathematics Subject Classification}
\makeatother

\title[Eventually Entanglement Breaking Channels]{A complete characterization of  eventually entanglement breaking unital quantum channels}

\author{B. V. Rajarama Bhat}
\address{B. V. Rajarama Bhat, Statistics and Mathematics Unit, Indian Statistical Institute, Bangalore Centre, 8th Mile Mysore Road, RVCE Post, Bangalore 560059, India.}
\email{bvrajaramabhat@gmail.com, bhat@isibang.ac.in}

\author{Pankaj Dey}
\address{Pankaj Dey, Statistics and Mathematics Unit, Indian Statistical Institute, Bangalore Centre, 8th Mile Mysore Road, RVCE Post, Bangalore 560059, India.}
\email{pankajdey2022@gmail.com, pankajdey\_pd@isibang.ac.in}

\author{Biswarup Saha}
\address{Biswarup Saha, Statistics and Mathematics Unit, Indian Statistical Institute, Bangalore Centre, 8th Mile Mysore Road, RVCE Post, Bangalore 560059, India.}
\email{brsaha27700@gmail.com, rs\_math2201@isibang.ac.in}

\begin{document}


\begin{abstract}

We obtain a complete characterization of eventually entanglement breaking (EEB)  unital channels in terms of some simple commutation properties of their  eigenvectors. Specifically, the following properties are equivalent for a unital channel: (i) it is EEB; (ii) it is eventually positive partial transpose (PPT); (iii) it is eventually mixed twisted dephasing; and (iv) its eigenvectors corresponding to non-zero eigenvalues commute with all its peripheral eigenvectors. Similar results hold for one-parameter semigroups of unital quantum channels, where continuity forces such semigroups to become primitive.   We employ a matrix integral technique inspired by a result of Watrous. We also characterize the extreme points of the convex set of unital EB channels and, using this characterization and the theory of equiangular tight frames (ETFs), identify several extreme points beyond the twisted dephasing channels.
\end{abstract}

\date{\today}

\keywords{Quantum channel, Enganglement breaking map, PPT map, Mixed twisted dephasing channel, Extreme point.}

\subjclass[2020]{81P40, 81P47, 81R15, 47L07, 47D07}

\maketitle


\section{Introduction}

The study of quantum channels and their ability to preserve or destroy entanglement is central to quantum information theory. Among these, entanglement breaking channels form a particularly fundamental class, as they completely destroy entanglement for every input state. Such channels are of considerable theoretical and practical interest, as they represent a strong form of quantum noise. In particular, entanglement breaking channels have zero quantum capacity and, consequently, cannot be used for the reliable transmission of quantum information.

Let $\mathbb{C}^d$ denote the $d$-dimensional complex vector space and $\mathbb{M}_d$ denote the algebra of $d\times d$ complex matrices. Formally, a \textit{quantum channel} (or simply a channel) is defined as a completely positive (CP) and trace-preserving linear map between matrix algebras. A map on $\mathbb{M}_d$ of the form \[X\mapsto\sum\limits_{j=1}^r \langle v_j,Xv_j\rangle|u_j\rangle\langle u_j|,\]
where $\{u_j\}_{j=1}^r$ and $\{v_j\}_{j=1}^r$ are vectors in $\mathbb{C}^d$, is called an \textit{entanglement breaking} (EB) map.

 A channel is said to be \textit{eventually entanglement breaking} (EEB) if some finite iterate of the channel is entanglement breaking, and consequently all subsequent iterates are entanglement breaking. This notion captures the phenomenon that, although a single application of a noisy quantum process may not completely destroy entanglement, repeated applications may eventually do so.
A central motivation for studying eventually entanglement breaking channels arises from the PPT-squared conjecture, proposed by Christandl \cite{RusMB:JunM:DavK:HayP:AndW12, BCHW}. This conjecture asserts that the composition of any two PPT (positive partial transpose) maps is entanglement breaking. Although the conjecture remains open in full generality, significant progress has been made in this direction.

Rahaman, Jaques, and Paulsen \cite{RJP} proved that every unital PPT channel becomes entanglement breaking after finitely many iterations, that is, it is eventually entanglement breaking (EEB).  However, simple examples (Example \ref{EMD:eg1}) show that a channel need not be PPT for its iterates to become EEB. The class of such maps is much larger. In fact, it is dense in the space of all unital quantum channels. Here, we characterize the unital quantum channels that are EEB. Central to our study is a class of maps that we call as mixed twisted dephasing channels. Maps on $\mathbb{M}_d$ of the form
$$X\mapsto \sum _{j=1}^d \langle v_j, Xv_j\rangle |u_j\rangle \langle u_j|,$$
where $\{u_j\}_{j=1}^d$ and $\{v_j\}_{j=1}^d$ are orthonormal bases of $\mathbb{C}^d$, are known as \textit{twisted dephasing channels}. Clearly, they are EB. In fact, they are extreme points of the convex set of unital EB maps, although they do not cover all extreme points for $d\geq 3$. Their convex combinations form the \textit{mixed twisted dephasing} (MTD) channels. We observe that EEB channels are also eventually MTD. In fact, the classes of eventually EB, eventually MTD, eventually PPT, and eventually 2-copositive unital channels are all equal. Moreover, a unital quantum channel eventually possesses any of these  (and hence all of these) properties if and only if its eigenvectors corresponding to non-zero eigenvalues commute with all its peripheral eigenvectors. Thus, this property can be determined simply by examining the channel itself, without computing its iterates. This is our main result (Theorem \ref{EMD:thm5}) in the discrete-time situation and is described through the following diagram (Figure \ref{discrete_EEB_diagram}).

\begin{figure}[htbp]
\centering
\resizebox{\textwidth}{!}{
\begin{tikzpicture}[
    box/.style={
        rectangle, draw=black, thick, rounded corners, 
        inner sep=6pt, 
        align=center, font=\large 
    },
    charbox/.style={
        rectangle, draw=black, thick, rounded corners, inner sep=10pt, 
        align=center, font=\Large
    },
    implies/.style={
        double, double equal sign distance, -Implies, thick
    },
    iff/.style={
        double, double equal sign distance, Implies-Implies, thick
    },
    notimplies/.style={
        double, double equal sign distance, -Implies, thick,
        decoration={
            markings, 
            mark=at position 0.5 with {\draw[red, very thick, solid] (-4pt,-6pt) -- (4pt,6pt);}
        },
        postaction={decorate}
    },
    ce_text/.style={
        font=\normalsize, text=blue, align=center
    }
]

\node[box] (EB) {Entanglement\\breaking (EB)};
\node[box] (MD) [left=1.6cm of EB] {Mixed twisted\\dephasing\\(MTD)};
\node[box] (PPT) [right=1.6cm of EB] {Positive partial\\transpose\\(PPT)};
\node[box] (2COP) [right=1.6cm of PPT] {2-copositive};

\node[box] (EEB) [below=2.8cm of EB] {Eventually\\entanglement\\breaking (EEB)};
\node[box] (EMTD) at (MD |- EEB) {Eventually\\mixed twisted\\dephasing (EMTD)};
\node[box] (EPPT) at (PPT |- EEB) {Eventually\\positive partial\\transpose (EPPT)};
\node[box] (E2COP) at (2COP |- EEB) {Eventually\\2-copositive};

\node[box] (PRIM) [left=1.6cm of EMTD] {Primitive};

\path (EEB) -- (EPPT) node[midway] (MID) {};
\node[charbox] (CHAR) [below=2cm of MID] {
    All eigenvectors corresponding to  non-zero eigenvalues\\
    commute with all peripheral eigenvectors
};

\draw[implies] ([yshift=6pt]MD.east) -- ([yshift=6pt]EB.west);
\draw[notimplies] ([yshift=-6pt]EB.west) -- node[below, ce_text] {\ref{EMD:eg4},\\ \ref{EMD:cor4}} ([yshift=-6pt]MD.east); 

\draw[implies] ([yshift=6pt]EB.east) -- ([yshift=6pt]PPT.west);
\draw[notimplies] ([yshift=-6pt]PPT.west) -- node[below, ce_text] {\cite{HHH},\\ \cite{HO}} ([yshift=-6pt]EB.east);

\draw[implies] ([yshift=6pt]PPT.east) -- ([yshift=6pt]2COP.west);
\draw[notimplies] ([yshift=-6pt]2COP.west) -- node[below, ce_text] {\ref{EMD:eg3}} ([yshift=-6pt]PPT.east);

\draw[implies] ([yshift=6pt]PRIM.east) -- ([yshift=6pt]EMTD.west);
\draw[notimplies] ([yshift=-6pt]EMTD.west) -- node[below, ce_text] {\ref{EMD:eg2}} ([yshift=-6pt]PRIM.east);

\draw[iff] (EMTD) -- (EEB);
\draw[iff] (EEB) -- (EPPT);
\draw[iff] (EPPT) -- (E2COP);

\draw[implies] (MD) -- (EMTD);
\draw[implies] (EB) -- (EEB);
\draw[implies] (PPT) -- (EPPT);

\draw[implies] ([xshift=-6pt]2COP.south) -- ([xshift=-6pt]E2COP.north);
\draw[notimplies] ([xshift=6pt]E2COP.north) -- node[right, ce_text] {\ref{EMD:eg1}} ([xshift=6pt]2COP.south);

\draw[iff] (CHAR.160) -- (EMTD.south);
\draw[iff] (CHAR.115) -- (EEB.south);
\draw[iff] (CHAR.65) -- (EPPT.south);
\draw[iff] (CHAR.20) -- (E2COP.south);

\end{tikzpicture}
} 
\caption{EEB unital channels and their counterparts in discrete time}\label{discrete_EEB_diagram}
\end{figure}

Briefly, our proof strategy is as follows: we begin by characterizing the extreme points of the compact convex set of unital EB channels on $\mathbb{M}_d$ and establishing the extremity of twisted dephasing channels. Building on our characterization, we construct several extreme points beyond this family using the theory of equiangular tight frames (ETFs). Then, motivated by the work of Watrous \cite{JW2} and Kribs et al. \cite{KLPR}, we derive two matrix integral formulae (Theorem \ref{EMD:thm3} and Theorem \ref{EMD:thm2}) for unital quantum channels. The first formula relates to the completely depolarizing channel $\delta_d$, which is the conditional expectation onto the $C^*$-algebra $\mathbb{C}I_d$ of scalar matrices in $\mathbb{M}_d$, and the integral is taken over all pairs of unitaries.  It is then generalized to conditional expectations onto an arbitrary commutative $C^*$-subalgebra $\mathcal{A}$ of $\mathbb{M}_d.$ Here, the integral is taken over pairs of unitaries from the commutant of $\mathcal{A}.$
With the help of these formulae, we are able to show that unital quantum channels close to these conditional expectation maps, provided they have some additional commutation properties, are actually MTD channels (Theorem \ref{EMD:cor2} and Theorem \ref{EMD:cor1}).  This serves as our main tool.

It is well-known that for any unital completely positive (CP) map, the linear span of its peripheral eigenvectors (that is, eigenvectors corresponding to eigenvalues of modulus 1) forms a $C^*$-subalgebra of $\mathbb{M}_d$ under a modified product called the extended Choi-Effros product. This $C^*$-algebra is called the peripheral Poisson boundary of the CP map. For unital quantum channels, this product matches with the original product. In other words, in this case, the peripheral Poisson boundary is a $C^*$-subalgebra of $\mathbb{M}_d$. If, furthermore, the channel is EB or EEB,  then this algebra is commutative. Asymptotically, the channel approaches the conditional expectation onto this commutative algebra, modulo an automorphism. Therefore, by applying the results derived from the matrix integral formulae, we conclude that these channels are mixed twisted dephasing. 

There is a subtle point to note in showing the eventual MTD property for a semigroup $\{\tau ^n\}_{n\geq 0}$. The composition of any CP map with an EB map (either from the left or the right) is again EB. Therefore, if $\tau ^{n_0}$ is EB, then $\tau ^n$ is EB for all $n\geq n_0.$ We cannot make a similar claim for MTD channels, because the fact that $\tau ^{n_0}$ is MTD does not immediately mean that $\tau ^{n_0+1}$ is MTD. However, Bhat and Devendra \cite{BD} showed that for any unital quantum channel $\tau $, the sequence $\{\tau ^n\}_{n\geq 0}$ is eventually mixed unitary. Because the composition of MTD channels with unitary conjugations are clearly MTD, the fact that $\tau^{n_0}$ is MTD does imply the eventual MTD property, although it remains unclear whether $\tau^{n_0+1}$ is MTD. 

We further investigate one-parameter semigroups of quantum channels. While Hanson, Rouz\'e, and Fran\c ca \cite{HRS} originally established that a continuous quantum Markov semigroup is primitive if and only if it is eventually EB, we strengthen their result for semigroups of unital quantum channels by extending the equivalence to include the eventual MTD, eventual PPT, and eventual $2$-copositive properties (Theorem \ref{EMD:thm4}). To establish this result, we adopt different approach, based on the interiority property of $\delta_d$ within the set of MTD channels (Theorem \ref{EMD:cor2}). Specifically, we characterize the primitivity of these semigroups in terms of their peripheral Poisson boundary (the space spanned by the eigenvectors of the generator corresponding to its purely imaginary eigenvalues), which is equivalent to the semigroup asymptotically approaching $\delta_d$. Thus, a primitive semigroup of quantum channels does not merely become EB after some time; it eventually enters the convex hull of twisted dephasing channels. A crucial distinction from the discrete setting is that, in continuous time, primitivity is equivalent to the eventual MTD property, whereas a discrete channel can be eventually MTD without being primitive (Example \ref{EMD:eg2}). A diagrammatic view of our main result characterizing eventually EB unital channels in continuous time is presented below (Figure \ref{continuous_EEB_diagram}).

\begin{figure}[htbp]
    \centering
    \begin{tikzpicture}[
        scale=0.85,
        transform shape,
        box/.style={
            rectangle, 
            draw=black, 
            thick, 
            rounded corners,
            minimum height=0.8cm, 
            minimum width=2.8cm,
            align=center, 
            fill=white, 
            font=\footnotesize
        },
        implies/.style={
            double, 
            double equal sign distance, 
            Implies-Implies, 
            thick
        }
    ]

        \node[box] (N1) at ({4.8*cos(90)}, {3.8*sin(90)}) {Eventually mixed\\ twisted dephasing (EMTD)};
        \node[box] (N2) at ({4.8*cos(38.57)}, {3.8*sin(38.57)}) {Eventually Entanglement\\ Breaking (EEB)};
        \node[box] (N3) at ({4.8*cos(-12.86)}, {3.8*sin(-12.86)}) {Eventually Positive\\ Partial Transpose (EPPT)};
        \node[box] (N4) at ({4.8*cos(-64.29)}, {3.8*sin(-64.29)}) {Eventually \\2-copositive};
        \node[box] (N5) at ({4.8*cos(-115.71)}, {3.8*sin(-115.71)}) {Trivial peripheral\\Poisson boundary};
        \node[box] (N6) at ({4.8*cos(-167.14)}, {3.8*sin(-167.14)}) {Primitive};
        \node[box] (N7) at ({4.8*cos(141.43)}, {3.8*sin(141.43)}) {Semigroup\\ approaches to $\delta_d$};

        \draw[implies] (N1) -- (N2);
        \draw[implies] (N2) -- (N3);
        \draw[implies] (N3) -- (N4);
        \draw[implies] (N4) -- (N5);
        \draw[implies] (N5) -- (N6);
        \draw[implies] (N6) -- (N7);
        \draw[implies] (N7) -- (N1);

    \end{tikzpicture}
    \caption{EEB unital channels and their counterparts in continuous time}\label{continuous_EEB_diagram}
    
\end{figure}

The structure of the paper is as follows. In Section \ref{priliminaries}, we provide the necessary preliminaries for our discussion. Section \ref{extreme point of UEBTP} is devoted to studying the extreme points of the compact convex set of unital EB channels.  We first characterize these points of the compact convex set of unital EB channels and show that twisted dephasing channels are indeed extreme points of this set (Theorem \ref{EMD:thm6} and Corollary \ref{EMD:cor3}). We utilize this characterization and the notion of equiangular tight frames to explicitly construct several extreme points beyond twisted dephasing channels (Example \ref{EMD:eg4} and Corollary \ref{EMD:cor4}).

In Sections \ref{completely depolarizing channel} and \ref{conditional expectation}, we develop the tools required for our main results by studying, respectively, the interiority of the completely depolarizing channel and that of the trace-preserving conditional expectation onto a commutative $C^*$-algebra within the set of MTD channels. Section \ref{discrete time} characterizes eventually EB unital channels in discrete time, while Section \ref{continuous time} does so in continuous time. We conclude by discussing an instructive class of examples in Section \ref{appendix}.

\section{Preliminaries}\label{priliminaries}

Let $\mathbb{C}^d$ denote the $d$-dimensional complex vector space equipped with the inner product defined as $\langle x,y\rangle=x^*y$ for $x,y\in\mathbb{C}^d$. We denote by $\{e_j\}_{j=1}^d$ the standard orthonormal basis of $\mathbb{C}^d$. Let $\mathbb{M}_{d_1,d_2}$ denote the space of all $d_1 \times d_2$ complex matrices, abbreviated as $\mathbb{M}_d$ when $d_1=d_2$. We denote by $\mathbb{M}_d^+$ the cone of all positive (semi-definite) matrices in $\mathbb{M}_d$. We begin with the following definition.

\begin{defn}(Separable matrix)
	A positive matrix $Z\in\mathbb{M}_{d_1}\otimes\mathbb{M}_{d_2}$ is said to be \textit{separable} if $Z=\sum_{j=1}^r X_j\otimes Y_j$ where $X_j\in\mathbb{M}_{d_1}^+$ and $Y_j\in\mathbb{M}_{d_2}^+$ for all $1\leq j\leq r$.
\end{defn}

We denote the identity map on $\mathbb{M}_d$ by $\operatorname{id}_d$. The transpose map $T:\mathbb{M}_d\rightarrow\mathbb{M}_d$ is defined by $T(X)=X^t$ for all $X\in\mathbb{M}_d$, where $X^t$ denotes the transpose of $X$ with respect to a fixed orthonormal basis. The trace of a matrix $X\in\mathbb{M}_d$ is denoted by $\text{Tr}(X)$.

\begin{defn}
	Let $\varphi:\mathbb{M}_{d_1}\rightarrow\mathbb{M}_{d_2}$ be a linear map. Then, $\varphi$ is said to be
	\begin{enumerate}
		\item \textit{positive} if $\varphi(\mathbb{M}_{d_1}^+)\subseteq\mathbb{M}_{d_2}^+$.
		\item \textit{$k$-positive} (respectively, \textit{$k$-copositive}) if $\varphi\otimes\operatorname{id}_k$ (respectively, $(\varphi\circ T)\otimes\operatorname{id}_k$) is positive.
		\item \textit{completely positive} (respectively, \textit{completely copositive}) if $\varphi$ is $k$-positive (respectively, $k$-copositive) for all $k\geq 1$.
		\item \textit{trace-preserving} (TP) if $\text{Tr}(\varphi(X))=\text{Tr}(X)$ for all $X\in\mathbb{M}_{d_1}$.
		\item \textit{quantum channel} if $\varphi$ is completely positive (CP) and TP.
		\item \textit{positive partial transpose} (PPT) if $\varphi$ is CP and completely copositive (coCP).
		\item \textit{entanglement breaking} (EB) if for every $k\geq 1$, the matrix $(\varphi\otimes\text{id}_k)(Z)$ is separable for every $Z\in(\mathbb{M}_{d_1}\otimes \mathbb{M}_k)^+$.
	\end{enumerate}
\end{defn}

 For vectors $x \in \mathbb{C}^{d_1}$ and $y \in \mathbb{C}^{d_2}$, we use the standard bra-ket notation $|x\rangle\langle y|$ to denote the rank-one operator from $\mathbb{C}^{d_2}$ to $\mathbb{C}^{d_1}$ defined by $z \mapsto \langle y, z\rangle x$. It is immediate to check that $A|x \rangle\langle y|B=|Ax \rangle\langle B^*y|$ for $A\in\mathbb{M}_{d_1}$ and $B\in\mathbb{M}_{d_2}$. We regard $\mathbb{M}_d$ as a Hilbert space equipped with the Hilbert-Schmidt inner product, that is, $\langle X, Y\rangle=\text{Tr}(X^*Y)$ for $X,Y\in\mathbb{M}_d$. Let $\varphi:\mathbb{M}_{d_1}\rightarrow\mathbb{M}_{d_2}$ be a linear map. The dual of $\varphi$ is defined as the unique linear map $\varphi^*:\mathbb{M}_{d_2}\rightarrow\mathbb{M}_{d_1}$ satisfying $\text{Tr}(X^*\varphi^*(Y))=\text{Tr}(\varphi(X)^*Y)$ for $X\in\mathbb{M}_{d_1}, Y\in\mathbb{M}_{d_2}$. It is straightforward to check that $\varphi$ is trace-preserving if and only if $\varphi^*(I_{d_2})=I_{d_1}$.

   In the realm of quantum information theory, the Choi-Jamio\l kowski isomorphism \cite{Cho75, Jam72}, commonly known as channel-state duality, illustrates the connection between quantum channels and quantum states. Let $\mathcal{B}({\mathbb{M}_{d_1},\mathbb{M}_{d_2}})$ denote the space of all linear maps from $\mathbb{M}_{d_1}$ to $\mathbb{M}_{d_2}$.

 \begin{thm}[Choi-Jamio\l kowski \cite{Cho75, Jam72}]\label{CJ isomorphism}
 	Let $\mathcal{J}$ be the linear map from $\mathcal{B}({\mathbb{M}_{d_1},\mathbb{M}_{d_2}})$ to $\mathbb{M}_{d_2}\otimes\mathbb{M}_{d_1}$ defined by 
 	$$\varphi\mapsto(\varphi\otimes\operatorname{id}_{d_1})|\xi_0\rangle\langle\xi_0|,$$
 	where $\xi_0=\sum_{j=1}^{d_1} e_j\otimes e_j$. Then, we have the following:
 	
 	\begin{enumerate}
 		\item $\mathcal{J}$ is a vector space isomorphism. 
 		\item $\varphi$ is positive map if and only if $\mathcal{J}(\varphi)$ is positive on simple tensor vectors, that is, $\langle x\otimes y, \mathcal{J}(\varphi)x\otimes y\rangle\geq 0$ for all $x\in\mathbb{C}^{d_1}$ and $y\in\mathbb{C}^{d_2}$. 
 		\item $\varphi$ is CP if and only if $\mathcal{J}(\varphi)$ is positive.
        \item $\varphi$ is EB if and only if $\mathcal{J}(\varphi)$ is separable.
        \item $\varphi$ is coCP if and only if $(\operatorname{id}_{d_2}\otimes T)(\mathcal{J}(\varphi))$ is positive.
 		\item $\varphi$ is unital if and only if $(\operatorname{id}_{d_2}\otimes\text{Tr})(\mathcal{J}(\varphi))=I_{d_2}$.
 		\item $\varphi$ is TP if and only if $(\text{Tr}\otimes \operatorname{id}_{d_1})(\mathcal{J}(\varphi))=I_{d_1}$.
 	\end{enumerate}
 \end{thm}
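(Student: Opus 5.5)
The plan is to work throughout with the explicit form of the Choi matrix. For $\varphi\in\mathcal{B}(\mathbb{M}_{d_1},\mathbb{M}_{d_2})$ we have
$$\mathcal{J}(\varphi)=\sum_{i,j=1}^{d_1}\varphi(|e_i\rangle\langle e_j|)\otimes|e_i\rangle\langle e_j|.$$
For (1), note that $\mathcal{J}$ is linear. Its inverse is recovered from the block decomposition: the $(i,j)$ block of $\mathcal{J}(\varphi)$ is $\varphi(|e_i\rangle\langle e_j|)$, and the matrix units span $\mathbb{M}_{d_1}$. Hence $\mathcal{J}$ is injective, and since both spaces have dimension $d_1^2d_2^2$, it is bijective.

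For (2) and (3), we use the identity
$$\langle y\otimes \bar x,\mathcal{J}(\varphi)\,y\otimes\bar x\rangle=\langle y,\varphi(|x\rangle\langle x|)y\rangle,$$
which follows by expanding in the matrix units. The statement's ordering $x\otimes y$ should be read with this conjugation and tensor order understood. Positivity of $\varphi$ is equivalent to positivity on rank-one projections, which gives (2).

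For (3), if $\varphi$ is CP then $\mathcal{J}(\varphi)=(\varphi\otimes\mathrm{id})(|\xi_0\rangle\langle\xi_0|)\ge0$. Conversely, if $\mathcal{J}(\varphi)\ge0$, write it as $\sum_k|w_k\rangle\langle w_k|$ and reshape each $w_k$ into a $d_2\times d_1$ matrix $V_k$. This gives $\varphi(X)=\sum_k V_kXV_k^*$, a Kraus form, so $\varphi$ is CP.

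For (5), observe that $(\mathrm{id}\otimes T)\mathcal{J}(\varphi)=\mathcal{J}(\varphi\circ T)$. Then (3) applied to $\varphi\circ T$ gives the claim.

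Items (6) and (7) are direct computations. We have $(\mathrm{id}\otimes\mathrm{Tr})\mathcal{J}(\varphi)=\sum_i\varphi(|e_i\rangle\langle e_i|)=\varphi(I)$. Similarly, $(\mathrm{Tr}\otimes\mathrm{id})\mathcal{J}(\varphi)=\sum_{i,j}\mathrm{Tr}\,\varphi(|e_i\rangle\langle e_j|)\,|e_i\rangle\langle e_j|$, and this equals $I_{d_1}$ if and only if $\mathrm{Tr}\,\varphi(|e_i\rangle\langle e_j|)=\delta_{ij}$ for all $i,j$, which by linearity is exactly trace preservation.

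Item (4) is the main obstacle. If $\varphi$ is EB, then by definition with $k=d_1$ the matrix $\mathcal{J}(\varphi)$ is separable. For the converse, suppose $\mathcal{J}(\varphi)=\sum_r P_r\otimes Q_r$ with $P_r,Q_r\ge0$. Inverting the Choi map term by term gives
$$\varphi(X)=\sum_r \mathrm{Tr}(Q_r^tX)\,P_r,$$
using that $\mathcal{J}$ of $X\mapsto\mathrm{Tr}(BX)A$ equals $A\otimes B^t$. Since $Q_r^t\ge0$, each term is a measure-and-prepare map. For such a term,
$$(\varphi_r\otimes\mathrm{id}_k)(Z)=P_r\otimes(\mathrm{Tr}\otimes\mathrm{id})\bigl((Q_r^t\otimes I)Z\bigr).$$
The second factor is positive when $Z\ge0$, because it is a partial trace of $(\sqrt{Q_r^t}\otimes I)Z(\sqrt{Q_r^t}\otimes I)$. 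So each term maps $Z$ to a separable matrix, and sums of separable matrices are separable, so $\varphi$ is EB. Decomposing $P_r,Q_r$ spectrally also yields the rank-one form $\sum_j\langle v_j,Xv_j\rangle|u_j\rangle\langle u_j|$ given in the introduction.

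The delicate points are bookkeeping: the transpose and conjugation conventions, and the order of tensor factors. I will keep these fixed consistently through the proof.
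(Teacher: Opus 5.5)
Your proposal is correct. The paper gives no proof of this theorem; it quotes it as standard background from \cite{Cho75, Jam72}, and item (4) is essentially the Holevo/Horodecki--Shor--Ruskai characterization it later cites as Theorem~\ref{Holevo form}. Your arguments are the standard proofs behind those citations: the matrix-unit computations, the reshaping argument for (3), the identity $(\operatorname{id}\otimes T)\mathcal{J}(\varphi)=\mathcal{J}(\varphi\circ T)$ for (5), and the measure-and-prepare decomposition for (4). You were also right to read the test vectors in (2) as $y\otimes\bar x$ (equivalently $y\otimes x$). Since $\mathcal{J}(\varphi)$ acts on $\mathbb{C}^{d_2}\otimes\mathbb{C}^{d_1}$, the order $x\otimes y$ in the statement is a slip.
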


 The matrix $\mathcal{J}(\varphi)$ is known as the Choi-matrix of a linear map $\varphi$. Let $V\in\mathbb{M}_{d_1,d_2}$. The adjoint map $\operatorname{Ad}_V:\mathbb{M}_{d_1}\rightarrow\mathbb{M}_{d_2}$ is defined as $\operatorname{Ad}_V(X)=VXV^*$ for $X\in\mathbb{M}_{d_1}$. Check that the dual of $\operatorname{Ad}_V$ is $\text{Ad}_{V^*}$. Choi \cite{Cho75} and Kraus \cite{Kra71} characterized CP maps as follows.

 \begin{thm}[Choi-Kraus \cite{Cho75, Kra71}]
 	A linear map $\varphi:\mathbb{M}_{d_1}\rightarrow\mathbb{M}_{d_2}$ is completely positive if and only if there exist matrices $\{V_j\}_{j=1}^r\subseteq\mathbb{M}_{d_1,d_2}$ such that
 	\begin{align}\label{Choi-Kraus representation}
 		\varphi=\sum\limits_{j=1}^r \operatorname{Ad}_{V_j}.
 	\end{align}
 \end{thm}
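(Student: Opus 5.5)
The plan is to go through the Choi matrix $\mathcal{J}(\varphi)=\sum_{j,k}\varphi(|e_j\rangle\langle e_k|)\otimes|e_j\rangle\langle e_k|$, which is a positive matrix precisely when $\varphi$ is completely positive (Theorem \ref{CJ isomorphism}).

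\emph{Sufficiency.} Suppose $\varphi=\sum_{j=1}^r \operatorname{Ad}_{V_j}$. Each $\operatorname{Ad}_{V_j}$ is completely positive, because
\[
(\operatorname{Ad}_{V_j}\otimes\operatorname{id}_k)(Z)=(V_j\otimes I_k)\,Z\,(V_j\otimes I_k)^*\geq 0
\quad\text{for every } Z\geq 0 .
\]
A sum of completely positive maps is completely positive, so $\varphi$ is completely positive.

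\emph{Necessity, step 1.} Let $\varphi$ be completely positive. Applying $\varphi\otimes\operatorname{id}_{d_1}$ to the positive matrix $|\xi_0\rangle\langle\xi_0|$ shows that $\mathcal{J}(\varphi)$ is positive. Use the spectral decomposition to write
\[
\mathcal{J}(\varphi)=\sum_{j=1}^r |w_j\rangle\langle w_j|, \qquad w_j\in\mathbb{C}^{d_2}\otimes\mathbb{C}^{d_1}.
\]
Expand each vector as $w_j=\sum_{k} v_{jk}\otimes e_k$ with $v_{jk}\in\mathbb{C}^{d_2}$. Define $V_j:\mathbb{C}^{d_1}\to\mathbb{C}^{d_2}$ by $V_je_k=v_{jk}$. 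With this definition,
\[
w_j=(V_j\otimes I)\,\xi_0 .
\]

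\emph{Necessity, step 2.} Compute the Choi matrix of the candidate map:
\[
\mathcal{J}\Bigl(\sum_j \operatorname{Ad}_{V_j}\Bigr)=\sum_j (V_j\otimes I)\,|\xi_0\rangle\langle\xi_0|\,(V_j\otimes I)^*=\sum_j |w_j\rangle\langle w_j|=\mathcal{J}(\varphi).
\]
By injectivity of $\mathcal{J}$ (Theorem \ref{CJ isomorphism}(1)), this gives $\varphi=\sum_j\operatorname{Ad}_{V_j}$.

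The only real obstacle is bookkeeping: the $V_j$ act from $\mathbb{C}^{d_1}$ to $\mathbb{C}^{d_2}$, so $V_j\in\mathbb{M}_{d_2,d_1}$. Under the paper's convention $\operatorname{Ad}_V(X)=VXV^*$, this is the space written $\mathbb{M}_{d_1,d_2}$ in the statement, so the index convention must be read accordingly. One also has to check the identity $(V\otimes I)\xi_0=\sum_k Ve_k\otimes e_k$, which holds directly from the definition of $\xi_0$.
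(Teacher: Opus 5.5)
Your proof is correct. The paper gives no proof of this theorem (it simply cites Choi and Kraus), and your argument is Choi's original one: positivity of $\mathcal{J}(\varphi)$ from $d_1$-positivity, a rank-one decomposition of it, reshaping each $w_j$ into $V_j$ with $w_j=(V_j\otimes I)\xi_0$, and then injectivity of $\mathcal{J}$. It uses only the trivial direction of Theorem \ref{CJ isomorphism}(3), so there is no circularity.

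Your index remark is also right. The paper defines $\mathbb{M}_{d_1,d_2}$ as the $d_1\times d_2$ matrices, which is inconsistent with $\operatorname{Ad}_V(X)=VXV^*$ on $\mathbb{M}_{d_1}$. The Kraus operators are genuinely $d_2\times d_1$ matrices.
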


 \begin{rmk}\label{metric operator space}
 	The representation given in Equation \ref{Choi-Kraus representation} is known as the Choi-Kraus representation of $\varphi$, and the operators $\{V_j\}_{j=1}^r$ are called the Kraus operators associated with $\varphi$. If $\varphi$ admits another representation with Kraus operators $\{W_j\}_{j=1}^k$, then it can be shown that $\text{span}\{W_j:1\leq j\leq k\}=\text{span}\{V_j:1\leq j\leq r\}$. We refer to $\text{span}\{V_j:1\leq j\leq r\}$ as the \textit{Kraus operator space} of $\varphi$ and denote it by $\mathscr{K}_\varphi$. The minimum number of Kraus operators required to represent a CP map $\varphi$ as in Equation \ref{Choi-Kraus representation} is called the Choi rank of $\varphi$, and is denoted by $\operatorname{CR}(\varphi)$. It can be checked that $\operatorname{CR}(\varphi)=r$ if and only if $\{V_j\}_{j=1}^r$ is linearly independent. Moreover, $\operatorname{CR}(\varphi)=r$ if and only if $\text{rank}(\mathcal{J}(\varphi))=r$.
  \end{rmk}

  The following theorem characterizes entanglement breaking maps.

  \begin{thm}[\cite{H, HSR}]\label{Holevo form}
  	Let $\varphi:\mathbb{M}_{d_1}\rightarrow\mathbb{M}_{d_2}$ be a linear map. Then, the following statements are equivalent:
  	\begin{enumerate}
  		\item $\varphi$ is entanglement breaking.
  		\item There exist rank-one matrices $\{V_j\}_{j=1}^r\subseteq\mathbb{M}_{d_1,d_2}$ such that $	\varphi=\sum_{j=1}^r \operatorname{Ad}_{V_j}$.
  		\item (Holevo form) There exists $F_j\in\mathbb{M}_{d_1}^+, R_j\in\mathbb{M}_{d_2}^+$ such that $\varphi(X)=\sum_{j=1}^k\text{Tr}(XF_j)R_j$.
  	\end{enumerate}
  \end{thm}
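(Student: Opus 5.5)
We prove the cycle of implications (1)$\Rightarrow$(3)$\Rightarrow$(2)$\Rightarrow$(1), with the Choi matrix $\mathcal{J}(\varphi)=(\varphi\otimes\operatorname{id}_{d_1})|\xi_0\rangle\langle\xi_0|$ of Theorem \ref{CJ isomorphism} as the bridge throughout.

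For (1)$\Rightarrow$(3), apply the definition of EB with $k=d_1$ and $Z=|\xi_0\rangle\langle\xi_0|$, which is positive. Then $\mathcal{J}(\varphi)$ is separable: $\mathcal{J}(\varphi)=\sum_{j} R_j\otimes Y_j$ with $R_j\in\mathbb{M}_{d_2}^+$ and $Y_j\in\mathbb{M}_{d_1}^+$. This is item (4) of Theorem \ref{CJ isomorphism}. To read off $\varphi$, use the inversion formula
\[
\varphi(X)=(\operatorname{id}_{d_2}\otimes\text{Tr})\bigl(\mathcal{J}(\varphi)(I_{d_2}\otimes X^t)\bigr).
\]
One checks it on matrix units: $\mathcal{J}(\varphi)=\sum_{a,b}\varphi(E_{ab})\otimes E_{ab}$, and $\text{Tr}(E_{ab}E_{cd}^t)=\delta_{ac}\delta_{bd}$. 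Substituting the separable decomposition gives $\varphi(X)=\sum_j \text{Tr}(Y_jX^t)R_j=\sum_j\text{Tr}(XY_j^t)R_j$. Set $F_j=Y_j^t$; these are positive because transposition preserves positivity. This gives the Holevo form.

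For (3)$\Rightarrow$(2), spectrally decompose $F_j=\sum_a |f_{ja}\rangle\langle f_{ja}|$ and $R_j=\sum_b |r_{jb}\rangle\langle r_{jb}|$. Then $\text{Tr}(XF_j)R_j=\sum_{a,b}\langle f_{ja},Xf_{ja}\rangle |r_{jb}\rangle\langle r_{jb}|$. Each summand equals $\operatorname{Ad}_{V}(X)$ with $V=|r_{jb}\rangle\langle f_{ja}|$, a rank-one matrix, since $VXV^*=\langle f_{ja},Xf_{ja}\rangle|r_{jb}\rangle\langle r_{jb}|$.

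For (2)$\Rightarrow$(1), write each rank-one Kraus operator as $V_j=|u_j\rangle\langle v_j|$. Fix $k$ and $Z\in(\mathbb{M}_{d_1}\otimes\mathbb{M}_k)^+$, and write $Z=\sum_i|z_i\rangle\langle z_i|$. It is enough to treat $Z=|z\rangle\langle z|$ with $z=\sum_m x_m\otimes y_m$. Then $(V_j\otimes I_k)z=u_j\otimes w_j$, where $w_j=\sum_m\langle v_j,x_m\rangle y_m$. Hence $(\operatorname{Ad}_{V_j}\otimes\operatorname{id}_k)(Z)=|u_j\rangle\langle u_j|\otimes|w_j\rangle\langle w_j|$, a product of positive matrices. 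Summing over $j$ and $i$ gives a separable matrix.

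Every step is a direct computation. The only point needing care is the transpose in the inversion formula, and it is harmless because $Y_j^t\geq 0$.
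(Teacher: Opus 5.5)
Your proof is correct and complete. The inversion formula $\varphi(X)=(\operatorname{id}_{d_2}\otimes\text{Tr})\bigl(\mathcal{J}(\varphi)(I_{d_2}\otimes X^t)\bigr)$ checks out on matrix units. The spectral splitting in (3)$\Rightarrow$(2) produces genuinely rank-one Kraus operators $|r_{jb}\rangle\langle f_{ja}|$. The computation $(V_j\otimes I_k)z=u_j\otimes w_j$ settles (2)$\Rightarrow$(1).

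The paper gives no proof of this statement and simply cites \cite{H, HSR}. Your cycle through the Choi matrix is essentially the standard Horodecki--Shor--Ruskai argument. It uses only the easy direction of item (4) of Theorem \ref{CJ isomorphism}, which you derive directly from the definition by taking $Z=|\xi_0\rangle\langle\xi_0|$, so nothing is circular.
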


  \begin{rmk}
  	The minimum number of rank-one Kraus operators required to represent an EB map $\varphi$ as in Theorem \ref{Holevo form} (ii) is called the entanglement breaking rank of $\varphi$, and is denoted by $\operatorname{EBR}(\varphi)$ (see \cite{PPPR}). Clearly, $\operatorname{CR}(\varphi)\leq\operatorname{EBR}(\varphi)$.
  \end{rmk}

  Every EB map is PPT because if $\varphi=\sum_{j=1}^r\operatorname{Ad}_{|u_j\ra\la v_j|}$ is EB then $\varphi\circ T=\sum_{j=1}^r\operatorname{Ad}_{|u_j\ra\la \overline{v_j}|}$. The converse holds when $d_1d_2\leq 6$, but fails in general \cite{HHH, HO}. However, it was observed in \cite{BCHW} that the composition of any two of the known PPT maps is EB. This observation led the authors to formulate the PPT-squared conjecture: the composition of two PPT maps is always EB. While the general conjecture remains open, several results are known in this direction. In particular, Rahaman, Jaques, and Paulsen \cite{RJP} established the following result, for which we first recall the necessary definition.  
  
  \begin{defn}\label{defn of eventually EB}
  	A linear map $\varphi$ on $\mathbb{M}_d$ is said to be \textit{eventually entanglement breaking} (respectively, \textit{eventually PPT}, or \textit{eventually $2$-copositive}) if there exists $n_\circ\in\mathbb{N}$ such that $\varphi^n$ is entanglement breaking (respectively, PPT, or $2$-copositive) for all $n \ge n_\circ$. 
  \end{defn}

  \begin{rmk}
  	The properties of being EB, PPT, and $2$-copositive are preserved under composition with arbitrary CP maps. Consequently, a CP map $\varphi$ on $\mathbb{M}_d$ is eventually EB (respectively, eventually PPT or eventually $2$-copositive) if and only if there exists $n_\circ\in\mathbb{N}$ such that $\varphi^{n_0}$ is EB (respectively, PPT or $2$-copositive). Indeed, if $\varphi^{n_0}$ is EB (respectively, PPT or $2$-copositive) for some $n_\circ\in\mathbb{N}$, then $\varphi^n =  \varphi^{n-n_{0}}\circ\varphi^{n_0}$ is EB (respectively, PPT or $2$-copositive) for all $n \ge n_0$.
  \end{rmk}

  \begin{thm}[Rahaman-Jaques-Paulsen \cite{RJP}]\label{Rahaman's result}
  	Every unital PPT channel is eventually EB.
  \end{thm}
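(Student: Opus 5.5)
The plan is to study the asymptotic behaviour of the powers $\varphi^n$ through the peripheral spectrum of $\varphi$. First I will use that PPT forces this asymptotic part to be commutative. Then I will show that the non-peripheral part dies out fast enough that $\varphi^n$ lands inside the separable cone for large $n$.

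\textbf{Step 1 (asymptotic structure).} Since $\varphi$ is a unital channel, the span $\mathcal{N}$ of its peripheral eigenvectors is a unital $C^*$-subalgebra of $\mathbb{M}_d$. On $\mathcal{N}$, $\varphi$ acts as a $*$-automorphism $\theta$. There is a trace-preserving conditional expectation $E$ onto $\mathcal{N}$ with $\varphi E = E\varphi=\theta E$, and
\[
\|\varphi^n-\theta^n E\|\le C\,r^n \quad \text{for some } r<1,
\]
where $r$ is any number with $|\lambda|<r$ for all non-peripheral eigenvalues $\lambda$ of $\varphi$. Since $\mathcal{N}$ is finite dimensional, $\theta$ has finite order up to a limit. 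More precisely, some subsequence $\theta^{n_k}\to \operatorname{id}_{\mathcal{N}}$, and therefore $\varphi^{n_k}\to E$.

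\textbf{Step 2 (PPT forces commutativity).} The set of PPT maps is closed and stable under composition with CP maps. Hence $E=\lim \varphi^{n_k}$ is PPT, and so is $E\circ T$ composed with CP maps. Suppose $\mathcal{N}$ contained a block $\mathbb{M}_k$ with $k\ge 2$. Then $\mathcal{N}\cong\bigoplus_i \mathbb{M}_{k_i}\otimes I_{m_i}$. Compose $E$ on the right with the unital CP embedding $X\mapsto X\otimes I_{m_i}$ of $\mathbb{M}_{k}$ into that block, and on the left with the CP compression back onto $\mathbb{M}_k$. This produces the identity map of $\mathbb{M}_k$, so the identity of $\mathbb{M}_k$ would be PPT. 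Then $T$ would be CP on $\mathbb{M}_k$, which is false for $k\ge2$. Hence $\mathcal{N}$ is commutative, spanned by orthogonal projections $P_1,\dots,P_m$ with $\sum P_i=I$. In this case
\[
E(X)=\sum_i \frac{\operatorname{Tr}(XP_i)}{\operatorname{Tr}P_i}\,P_i .
\]
This is in Holevo form, so it is EB, and so is every $\theta^nE$, since $\theta$ permutes the $P_i$.

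\textbf{Step 3 (from approximately EB to EB: the main obstacle).} An exponentially small perturbation of an EB map need not be EB, because $\mathcal{J}(\theta^nE)$ is not in the interior of the separable cone. I would handle this blockwise. Write $\varphi^n = \varphi^{n}\circ E' + \varphi^n\circ(\operatorname{id}-E')$, where $E'$ is the spectral projection onto the peripheral part. Compress $\mathcal{J}(\varphi^n)$ by the projections $\theta^n(P_j)\otimes \overline{P_i}$. Because $\varphi$ is unital and trace preserving and $\varphi(P_i)=\theta(P_i)$, the off-diagonal blocks $\theta^n(P_j)\varphi^n(\cdot)\theta^n(P_{j'})$ on inputs supported in $P_i$ can be controlled by $Cr^n$. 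Moreover, the diagonal block for $j=\theta$-image of $i$ is close to the normalized maximally mixed product $\frac{1}{\operatorname{Tr}P_i}P\otimes \overline{P_i}$ on that subspace. The Gurvits--Barnum ball says there is a fixed radius, depending only on block dimensions, around such a product of identities that consists of separable operators. Once $Cr^n$ is below that radius for every block, each diagonal block of $\mathcal{J}(\varphi^n)$ is separable.

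The expected difficulty is the cross terms between different blocks $i\neq i'$. I would first try to show they vanish. The idea is that $\varphi^n(|x\rangle\langle y|)$ for $x\in P_i$ and $y\in P_{i'}$ lies in the non-peripheral part, because $E(|x\rangle\langle y|)=0$. I would then absorb the remaining terms by splitting the small operator into positive and negative parts and dominating it by the diagonal blocks, which lie strictly inside the separable ball. This last absorption estimate is where I expect the real work. If it fails, the fallback is to use the PPT hypothesis once more: apply Step 2 to the compressed maps $X\mapsto P_j\varphi^{n}(P_iXP_i)P_j$ and argue by induction on $d$.
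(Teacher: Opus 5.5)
\paragraph{Verdict: a genuine gap at the cross terms.}
Steps 1 and 2 are sound, and so is the diagonal-block part of Step 3. Step 2 reaches the conclusion of the paper's Proposition~\ref{EMD:prop3} by a slightly different route; the paper's version needs only $2$-copositivity. The gap is exactly where you expected it, but the remedy you propose cannot work.

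Write $\mathcal{H}_i=\operatorname{Range}(P_i)$, $\overline{\mathcal{H}_i}=\operatorname{Range}(\overline{P_i})$ and $\theta(P_i)=P_{\pi(i)}$. Because the $P_i$ lie in the multiplicative domain, $\varphi^n(P_iXP_{i'})=P_{\pi^n(i)}\varphi^n(X)P_{\pi^n(i')}$ holds exactly. So the blocks you propose to bound by $Cr^n$ are in fact zero, and $\mathcal{J}(\varphi^n)$ is supported in $S=\bigoplus_i\mathcal{H}_{\pi^n(i)}\otimes\overline{\mathcal{H}_i}$. The cross terms ($i\neq i'$) are the off-diagonal blocks of $\mathcal{J}(\varphi^n)$ with respect to this decomposition of $S$.

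A nonzero product vector $a\otimes b$ lying in $S$ must lie in a single summand. Indeed, if $b$ had nonzero components in $\overline{\mathcal{H}_i}$ and $\overline{\mathcal{H}_{i'}}$, then $a$ would have to vanish on every $\mathcal{H}_j$. Hence every separable matrix supported in $S$ is block diagonal, and $\varphi^n$ can be EB only if the cross terms vanish \emph{exactly}. No decay estimate helps, and neither does splitting into positive and negative parts dominated by the diagonal blocks.

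The paper's Example~\ref{commutative peripheral space but not EB} makes this concrete. The channel $\lambda\operatorname{id}_d+(1-\lambda)\Delta_d$ has commutative peripheral space, one-dimensional (hence trivially separable) diagonal blocks, and cross terms of size $\lambda^n$, yet no power of it is even $2$-copositive. Your absorption step never invokes PPT, so it would apply verbatim to this example; it must therefore fail. The fallback does not help either: the compressions $X\mapsto P_j\varphi^n(P_iXP_i)P_j$ see only diagonal blocks and carry no information about $\varphi^n$ on $P_i\mathbb{M}_dP_{i'}$.

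\paragraph{The missing idea.}
You need a second use of the PPT hypothesis to show that $\varphi$ itself annihilates $P_i\mathbb{M}_dP_{i'}$ for $i\neq i'$. This is the content of the paper's Theorem~\ref{EMD:prop1} and Proposition~\ref{EMD:prop4}.

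Take $u\in\mathcal{H}_i$, $v\in\mathcal{H}_{i'}$ and $\xi=\overline{u}\otimes e_1+\overline{v}\otimes e_2$, and consider the positive matrix $((\varphi\circ T)\otimes\operatorname{id}_2)(|\xi\ra\la\xi|)$. Let $u'\in\mathcal{H}_{\pi(i)}$ and $v'\in\mathcal{H}_{\pi(i')}$.
\begin{itemize}
\item Its diagonal entry at $v'\otimes e_1$ is $\la v',\varphi(|u\ra\la u|)v'\ra=0$, because $\varphi(|u\ra\la u|)\in P_{\pi(i)}\mathbb{M}_dP_{\pi(i)}$ and $\pi(i)\neq\pi(i')$.
\item Its entry between $u'\otimes e_2$ and $v'\otimes e_1$ is $\la u',\varphi(|u\ra\la v|)v'\ra$.
\end{itemize}
Positivity forces this entry to vanish. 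Since $\varphi(|u\ra\la v|)=P_{\pi(i)}\varphi(|u\ra\la v|)P_{\pi(i')}$, it follows that $\varphi(|u\ra\la v|)=0$.

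Consequently every $\varphi^n$ is a permuted direct sum of unital channels between blocks of equal size, and the cross terms in $\mathcal{J}(\varphi^n)$ vanish identically. Your blockwise Gurvits--Barnum argument then completes the proof.

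The paper obtains the statement as a corollary of Theorem~\ref{EMD:thm5}, whose final step uses the integral representation behind Theorem~\ref{EMD:cor1} in place of Gurvits--Barnum. This yields the stronger conclusion that every unital $2$-copositive channel is eventually mixed twisted dephasing.
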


To establish this result, they employ the technique of the multiplicative domain. They also proved that the stabilized multiplicative domain of a unital quantum channel $\varphi$ is commutative if and only if some subsequence of $\{\varphi^{n}\}$ converges to an EB map (see also \cite{LG}). However, they demonstrated through counterexamples that the commutativity of the stabilized multiplicative domain alone does not guarantee that a channel is eventually EB. The class of eventually EB channels contains non-PPT channels, as demonstrated below.

The \textit{complete depolarizing channel} on $\mathbb{M}_d$, denoted by $\delta_d$, is defined as
 \begin{align}
 	\delta_d(X)=\frac{\text{Tr}(X)}{d}I_d.
 \end{align}
 Note that $\delta_d$ is the projection onto $\text{span}\{I_d\}$. Indeed, $\delta_d$ is EB, as it admits the following representation: $$\delta_d(X)=\frac{1}{d}\sum_{1\leq i,j\leq d}|e_i\rangle\langle e_j|X|e_j\rangle\langle e_i|.$$

\begin{eg}\label{EMD:eg1}
	Let $d\ge2$ be a natural number. Consider the unital quantum channel $\tau_\lambda=\lambda\text{id}_d+(1-\lambda)\delta_d$ for $\lambda\in (\frac{1}{d+1},1)$. Then, $\tau_\lambda$ is eventually EB. But $\tau_\lambda$ is not $2$-copositive, and hence not PPT. For details, we refer the reader to Example \ref{example in appendix}.
\end{eg}

Now, we construct a family of unital EB channels that will play a crucial role in our study. The \textit{completely dephasing channel} on $\mathbb{M}_d$, denoted by $\Delta_d$, is defined as
 \[\Delta_d(X)=\sum_{j=1}^d \langle e_j,X e_j\rangle|e_j\rangle\langle e_j|.\]
 Note that $\Delta_d$ is the projection onto the algebra of all diagonal matrices in $\mathbb{M}_d$. Let $\mathbb{U}(d)$ be the set of all unitary matrices in $\mathbb{M}_d$. For $U,V\in \mathbb{U}(d)$, we define
 \begin{align}\label{twisted dephasing channel}
 	  \beta_{U,V}=\text{Ad}_U\circ\Delta_d\circ\text{Ad}_{V^*}.
 \end{align}
Indeed, these are unital EB channels and we call them as $``$\textit{twisted dephasing channels}$"$. The following gives a Choi matrix description of twisted dephasing channels.

\begin{prop}\label{ChoiMatrixofTDchannels}
    Let $d\in\mathbb{N}$ and $\varphi$ be a linear map on $\mathbb{M}_d$. Then, the following statements are equivalent:
    \begin{enumerate}
        \item $\varphi$ is a twisted dephasing channel;
        \item There exist two orthonormal bases $\{u_k\}_{k=1}^d$ and $\{v_k\}_{k=1}^d$ of $\mathbb{C}^d$ such that the Choi matrix of $\varphi$ takes the form $\mathcal{J}(\varphi)=\sum_{k=1}^d |u_k\otimes v_k\ra\la u_k\otimes v_k|$.
        \item The Choi matrix $\mathcal{J}(\varphi)$ is separable and a projection satisfying the partial trace relations $(\text{id}_d\otimes \operatorname{Tr})(\mathcal{J}(\varphi))=I_d$ and $(\operatorname{Tr}\otimes\text{id}_d)(\mathcal{J}(\varphi))=I_d$.
    \end{enumerate}
\end{prop}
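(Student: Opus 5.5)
I will prove the cycle (i)$\Rightarrow$(ii)$\Rightarrow$(iii)$\Rightarrow$(i).

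\emph{(i)$\Rightarrow$(ii).} Write $\varphi=\beta_{U,V}=\operatorname{Ad}_U\circ\Delta_d\circ\operatorname{Ad}_{V^*}$. Since $\Delta_d=\sum_k\operatorname{Ad}_{|e_k\rangle\langle e_k|}$, we get $\varphi=\sum_k \operatorname{Ad}_{|Ue_k\rangle\langle Ve_k|}$. Put $u_k=Ue_k$ and $w_k=Ve_k$; both are orthonormal bases. For a rank-one Kraus operator, the Choi matrix is
\[
\mathcal{J}(\operatorname{Ad}_{|u\rangle\langle w|})=|u\rangle\langle u|\otimes|\overline{w}\rangle\langle\overline{w}|,
\]
because $(|u\rangle\langle w|\otimes I)\xi_0=u\otimes\overline{w}$; this uses $\sum_j \langle w,e_j\rangle e_j=\overline{w}$ in the standard basis. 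Setting $v_k=\overline{w_k}$, which is again an orthonormal basis, gives the form in (ii).

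\emph{(ii)$\Rightarrow$(iii).} The vectors $u_k\otimes v_k$ are orthonormal, so $\mathcal{J}(\varphi)$ is an orthogonal projection of rank $d$. It is visibly separable. The partial traces are $\sum_k |u_k\rangle\langle u_k|=I_d$ and $\sum_k|v_k\rangle\langle v_k|=I_d$.

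\emph{(iii)$\Rightarrow$(i).} This is the main step. Let $P=\mathcal{J}(\varphi)$, and write it as a separable sum of product pure states,
\[
P=\sum_{j=1}^r |x_j\otimes y_j\rangle\langle x_j\otimes y_j|,
\]
where the scalars are absorbed into the vectors. Since $P$ is a projection, every $x_j\otimes y_j$ lies in $\operatorname{ran}P$. Also $\operatorname{Tr}P=d$ by either partial trace relation, so $\operatorname{rank}P=d$.

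I would next extract an orthonormal basis of $\operatorname{ran}P$ made of product vectors. Using the partial trace $(\operatorname{id}\otimes\operatorname{Tr})P=\sum_j\|y_j\|^2|x_j\rangle\langle x_j|=I_d$, we get
\[
d=\operatorname{Tr}P=\sum_j\|x_j\|^2\|y_j\|^2 .
\]
Since $P\geq |x_j\otimes y_j\rangle\langle x_j\otimes y_j|$ and $P\le I$, each $\|x_j\otimes y_j\|\le1$.

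A cleaner route uses compression of the marginal. For a unit vector $z=x_j\otimes y_j/\|x_j\otimes y_j\|\in\operatorname{ran}P$, let $\hat x=x_j/\|x_j\|$ and $\hat y=y_j/\|y_j\|$, so $z=\hat x\otimes\hat y$. Then
\[
1=\langle \hat x,(\operatorname{id}\otimes\operatorname{Tr})(P)\hat x\rangle=\sum_{i}\langle \hat x\otimes f_i,P\,\hat x\otimes f_i\rangle
\]
for any orthonormal basis $\{f_i\}$. Choose $f_1=\hat y$. The $i=1$ term equals $1$, since $\hat x\otimes\hat y\in\operatorname{ran}P$. Hence $P(\hat x\otimes f)=0$ for every $f\perp\hat y$.

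Now take two distinct directions $z=\hat x\otimes\hat y$ and $z'=\hat x'\otimes\hat y'$ from the decomposition. If $\hat x'$ is not parallel to $\hat x$, run the same argument using the other marginal. We have $\langle z,z'\rangle=\langle\hat x,\hat x'\rangle\langle \hat y,\hat y'\rangle$. Decompose $\hat y'=c\hat y+f$ with $f\perp\hat y$. Then
\[
\langle \hat x\otimes \hat y', z'\rangle=\langle \hat x\otimes\hat y',Pz'\rangle=\langle P(c\,\hat x\otimes\hat y+\hat x\otimes f),z'\rangle=c\langle z,z'\rangle .
\]
This says $\langle\hat x,\hat x'\rangle=c\,\overline{c}\,\langle\hat x,\hat x'\rangle$, using $c=\langle\hat y,\hat y'\rangle$. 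So either $\hat x\perp\hat x'$, or $|\langle\hat y,\hat y'\rangle|=1$.

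Symmetrically, either $\hat y\perp\hat y'$ or $|\langle\hat x,\hat x'\rangle|=1$. So any two product directions are either equal up to phase or orthogonal. Grouping equal directions gives an orthonormal family of product vectors spanning $\operatorname{ran}P$. Hence we have exactly $d$ of them, $u_k\otimes v_k$, with $P=\sum_k|u_k\otimes v_k\rangle\langle u_k\otimes v_k|$.

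The partial traces now give $\sum_k|u_k\rangle\langle u_k|=I$ with $d$ unit vectors, forcing $\{u_k\}$ to be orthonormal; likewise $\{v_k\}$. Reversing the computation of (i)$\Rightarrow$(ii), we get $\varphi=\beta_{U,V}$ with $Ue_k=u_k$ and $Ve_k=\overline{v_k}$.

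The main obstacle I expect is the dichotomy (equal up to phase versus orthogonal) in the last step. If the computation above misbehaves in the mixed case where exactly one factor is parallel, I would fall back on an extremality argument: $P/d$ is a separable state whose marginals are maximally mixed and which is a normalized projection. I would then use the fact that a rank-$d$ separable state with maximally mixed marginals has the Schmidt structure above.
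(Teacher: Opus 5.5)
Your proof is correct. Its skeleton matches the paper's: separable decomposition, every product vector lies in $\operatorname{ran}P$, a parallel-or-orthogonal dichotomy, then merging. The difference is in how you obtain the dichotomy and how you finish.

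\textbf{How the paper gets the dichotomy.} It expands $\langle a_i\otimes b_i,\mathcal{J}(\varphi)(a_i\otimes b_i)\rangle=1$ in the given decomposition $\sum_j q_j|a_j\otimes b_j\rangle\langle a_j\otimes b_j|$. It subtracts this from the marginal identities $\sum_j q_j|\langle a_i,a_j\rangle|^2=1=\sum_j q_j|\langle b_i,b_j\rangle|^2$. Termwise this gives $|\langle a_i,a_j\rangle|^2(1-|\langle b_i,b_j\rangle|^2)=0$, and the symmetric identity. After merging, it tracks the weights $p_k$ and uses $\sum_k p_k|u_k\rangle\langle u_k|=I_d$ with orthonormal $u_k$ to force $r=d$ and $p_k=1$.

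\textbf{How you get it.} You prove a decomposition-free lemma: if $\hat x\otimes\hat y\in\operatorname{ran}P$ and $f\perp\hat y$, then $P(\hat x\otimes f)=0$. One inner-product computation then gives the dichotomy. At the end you never need the weights. An orthogonal projection equals $\sum_k|z_k\rangle\langle z_k|$ for any orthonormal basis $\{z_k\}$ of its range, and orthonormality of the factors follows from the marginals.

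\textbf{Comparison.} The paper's route is slightly shorter. Yours works for arbitrary product vectors in the range and makes the role of the projection property more transparent.

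\textbf{Two small points.}
\begin{itemize}
\item In your display, $\langle P(c\,\hat x\otimes\hat y+\hat x\otimes f),z'\rangle=\overline{c}\,\langle z,z'\rangle$, not $c\langle z,z'\rangle$, since the inner product is conjugate-linear in the first slot. Because $\langle z,z'\rangle=c\,\langle\hat x,\hat x'\rangle$, this yields exactly the factor $|c|^2$ you wrote, so nothing downstream changes.
\item The mixed case you worry about cannot occur once you combine your two dichotomies. If $\langle\hat x,\hat x'\rangle\neq 0$, the first gives $|\langle\hat y,\hat y'\rangle|=1$, and then the second gives $|\langle\hat x,\hat x'\rangle|=1$. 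If $\hat x\perp\hat x'$, the second gives $\hat y\perp\hat y'$. So the two product directions are either equal up to phase or factorwise orthogonal. Your fallback via an unproved claim about rank-$d$ separable states with maximally mixed marginals is unnecessary.
\end{itemize}
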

\begin{proof}
    The equivalence (1) $\Leftrightarrow$ (2) follows immediately from the canonical representation of twisted dephasing channels $\varphi = \sum_{k=1}^d \text{Ad}_{|Ue_k\ra\la Ve_k|}$ for $U,V\in\mathbb{U}(d)$. This yields $\mathcal{J}(\varphi) = \sum_{k=1}^d |Ue_k \otimes \overline{Ve_k}\ra\la Ue_k \otimes \overline{Ve_k}|$. The implication (2) $\Rightarrow$ (3) is evident from the given structure of the Choi matrix. We therefore proceed to prove (3) $\Rightarrow$ (2).
    
    Assume $\mathcal{J}(\varphi)$ is a separable projection satisfying the marginal trace conditions. By separability, $\mathcal{J}(\varphi) = \sum_j q_j |a_j \otimes b_j\ra\la a_j \otimes b_j|$ for some unit vectors $a_j, b_j \in \mathbb{C}^d$ and scalars $q_j > 0$. The partial trace constraints force
    \begin{equation}\label{EMD:eq3}
        \sum_j q_j |a_j\ra\la a_j| = I_d \quad \text{and} \quad \sum_j q_j |b_j\ra\la b_j| = I_d.
    \end{equation}
    Because $\mathcal{J}(\varphi)$ is a projection and $q_i |a_i \otimes b_i\ra\la a_i \otimes b_i| \le \mathcal{J}(\varphi)$, each $a_i \otimes b_i$ belongs to $\text{Range}(\mathcal{J}(\varphi))$. Thus, $\la a_i \otimes b_i, \mathcal{J}(\varphi)(a_i \otimes b_i)\ra = 1$, which expands to
    \begin{equation}\label{EMD:eq6}
        \sum_j q_j |\la a_i, a_j\ra|^2 |\la b_i, b_j\ra|^2 = 1.
    \end{equation}
    From Equation \ref{EMD:eq3}, evaluating the respective quadratic forms yields $\sum_j q_j |\la a_i, a_j\ra|^2 = 1$ and $\sum_j q_j |\la b_i, b_j\ra|^2 = 1$. Subtracting \eqref{EMD:eq6} from these two identities provides
    \[
        \sum_j q_j |\la a_i, a_j\ra|^2 (1 - |\la b_i, b_j\ra|^2) = 0 \quad \text{and} \quad \sum_j q_j |\la b_i, b_j\ra|^2 (1 - |\la a_i, a_j\ra|^2) = 0.
    \]
    Since $q_j > 0$ and the inner products are bounded by $1$, every term in both sums must vanish. Consequently, for any $i \neq j$, if $\la a_i, a_j\ra \neq 0$, then $|\la b_i, b_j\ra| = 1$, forcing $b_i$ and $b_j$ to generate the same rank-one projection, that is, $|b_i\ra\la b_i| = |b_j\ra\la b_j|$. By symmetry, this implies $|\la a_i, a_j\ra| = 1$ and hence $|a_i\ra\la a_i| = |a_j\ra\la a_j|$. Therefore, for $i \neq j$, either $\la a_i, a_j\ra = \la b_i, b_j\ra = 0$, or the rank-one projections $|a_i \otimes b_i\ra\la a_i \otimes b_i|$ and $|a_j \otimes b_j\ra\la a_j \otimes b_j|$ are equal.
    
    We can therefore merge the equal terms to rewrite $\mathcal{J}(\varphi) = \sum_{k=1}^r p_k |u_k \otimes v_k\ra\la u_k \otimes v_k|$, where $\{u_k\}_{k=1}^r$ and $\{v_k\}_{k=1}^r$ are now orthonormal sets in $\mathbb{C}^d$ and $p_k > 0$. The partial trace relations \eqref{EMD:eq3} now become $\sum_{k=1}^r p_k |u_k\ra\la u_k| = \sum_{k=1}^r p_k |v_k\ra\la v_k| = I_d$. The linear independence of $u_k$'s (or, $v_k$'s) forces $r = d$ and evaluating at some $u_l$ provides $p_l = 1$ for all $l$. This establishes that $\{u_k\}_{k=1}^d$ and $\{v_k\}_{k=1}^d$ are orthonormal bases, concluding (2).
\end{proof}
We now introduce the following definition.

 \begin{defn}[Mixed twisted dephasing channel]
 	A channel $\tau$ on $\mathbb{M}_d$ is said to be a \textit{mixed twisted dephasing} (MTD) channel if there exist $\{U_j, V_j: 1\leq j\leq n\}\subseteq \mathbb{U}(d)$ and a probability distribution $\{p_j\}_{j=1}^n$ (where $p_j \ge 0$ and $\sum_{j=1}^n p_j=1$) such that
 	\[\tau = \sum_{j=1}^n p_j \beta_{U_j, V_j}.\]
 \end{defn}

 The convex set of all mixed twisted dephasing channel on $\mathbb{M}_d$ is denoted by $\mathrm{MTD}(d)$. Due to the Carath\'{e}odory's theorem \cite{RocRT70}, this set is compact, as it is the convex hull of a compact set (the set of twisted dephasing channels) in a finite-dimensional space.  Moreover, it is closed under composition with mixed unitary channels from either side. Consequently, $\mathrm{MTD}(d)$ itself forms a semigroup under composition. Similarly, the set of all unital EB channels on $\mathbb{M}_d$ is denoted by $\mathrm{UEBTP}(d)$. Note that this set is also compact and convex. Indeed, $\mathrm{MTD}(d)\subseteq\mathrm{UEBTP}(d)$. The next proposition shows that this containment is, in fact, an equality when $d=2$.

 \begin{prop}\label{extreme point for M_2}
 	Every unital EB channel on $\mathbb{M}_2$ is a mixed twisted dephasing channel.
 \end{prop}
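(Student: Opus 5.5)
The plan is to pass to the Bloch-sphere (Pauli) description of Choi matrices on $\mathbb{M}_2\otimes\mathbb{M}_2$. Every Choi matrix in sight turns out to be determined by a single real $3\times 3$ "correlation matrix". In these coordinates the unital EB condition and the MTD condition become the same convex-hull condition on that matrix.

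\textbf{Step 1: the form of an EB Choi matrix.} Let $\varphi$ be a unital EB channel on $\mathbb{M}_2$. By Theorem \ref{CJ isomorphism}, $\mathcal{J}(\varphi)$ is separable and has both partial traces equal to $I_2$. Exactly as in the proof of Proposition \ref{ChoiMatrixofTDchannels}, we write $\mathcal{J}(\varphi)=\sum_j q_j P_j\otimes Q_j$ with $q_j>0$ and rank-one projections $P_j=\tfrac12(I+r_j\cdot\sigma)$, $Q_j=\tfrac12(I+s_j\cdot\sigma)$. Here $r_j,s_j\in\mathbb{R}^3$ are unit vectors and $\sigma=(\sigma_1,\sigma_2,\sigma_3)$ are the Pauli matrices. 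The marginal conditions \eqref{EMD:eq3} translate into
\[
\sum_j q_j=2,\qquad \sum_j q_j r_j=0,\qquad \sum_j q_j s_j=0.
\]
Expanding $P_j\otimes Q_j$ in the basis $\{\sigma_a\otimes\sigma_b\}$ (with $\sigma_0=I$), the linear terms cancel. We obtain
\[
\mathcal{J}(\varphi)=\tfrac14\Big(2\,I\otimes I+\sum_{a,b=1}^3 T_{ab}\,\sigma_a\otimes\sigma_b\Big),\qquad T=\sum_j q_j\, r_j s_j^{t}.
\]
Consequently $\mathcal{J}(\varphi)$, and hence $\varphi$ by Theorem \ref{CJ isomorphism}(1), is completely determined by the real matrix $T$.

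\textbf{Step 2: the twisted dephasing Choi matrices.} By Proposition \ref{ChoiMatrixofTDchannels}, a twisted dephasing channel on $\mathbb{M}_2$ has Choi matrix $P\otimes Q+(I-P)\otimes(I-Q)$. In Bloch form this is the pair of terms with vectors $(r,s)$ and $(-r,-s)$, each with weight $1$. The computation of Step 1 applies, and its correlation matrix is $T=2\,rs^{t}$. Conversely, every pair of unit vectors $r,s$ arises this way, since a unit Bloch vector determines a rank-one projection and $I-P$ has Bloch vector $-r$.

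Consider a convex combination $\sum_j p_j\beta_j$ of twisted dephasing channels. It has the form above with correlation matrix $2\sum_j p_j r_j s_j^t$, because the affine part $\tfrac14\cdot 2\,I\otimes I$ is the same for all of them.

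\textbf{Step 3: matching the two descriptions.} Return to the EB channel of Step 1 and set $p_j=q_j/2$. These form a probability distribution, and $T=2\sum_j p_j r_j s_j^t$. Let $\beta_j$ be the twisted dephasing channel attached to the unit vectors $(r_j,s_j)$. By Step 2, the channel $\sum_j p_j\beta_j$ has the same correlation matrix $T$ as $\varphi$. Since the Choi matrix is determined by $T$, we get $\mathcal{J}(\varphi)=\mathcal{J}\big(\sum_j p_j\beta_j\big)$, so $\varphi=\sum_j p_j\beta_j\in\mathrm{MTD}(2)$.

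\textbf{Main obstacle.} The only delicate point is the bookkeeping in the Pauli expansion. One must check that the coefficient of $I\otimes I$ equals $\tfrac14\sum_j q_j=\tfrac12$ in both descriptions. One must also check that the cross terms $\sigma_a\otimes I$ and $I\otimes\sigma_b$ vanish exactly because of the marginal conditions. Together these guarantee that the correlation matrix is the only remaining datum. The qubit-specific ingredient is that every rank-one projection on $\mathbb{C}^2$ has a complement $I-P$ which is again rank one with antipodal Bloch vector. This is what lets each individual separable term be "paired up" into a twisted dephasing channel, and it fails for $d\ge 3$.
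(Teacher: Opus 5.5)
Your proof is correct and is essentially the paper's argument. Both proofs pair each separable term $P_j\otimes Q_j$ (equivalently, each rank-one Kraus term $|u_k\rangle\langle v_k|$) with its complement $(I-P_j)\otimes(I-Q_j)$ to form a twisted dephasing channel with weight $q_j/2$, and both use the marginal conditions $\sum_j q_jP_j=\sum_j q_jQ_j=I_2$ to cancel the leftover terms. The paper does this cancellation directly on the maps, while your Pauli correlation matrix $T$ is a coordinate bookkeeping of the same identity $P\otimes Q+(I-P)\otimes(I-Q)=2P\otimes Q+I\otimes I-P\otimes I-I\otimes Q$.
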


 \begin{proof}
 	Suppose $\tau \in \mathrm{UEBTP}(2)$ admits the representation $\tau = \sum_{k=1}^n p_k \text{Ad}_{|u_k\ra\la v_k|}$, where $\{u_k\}_{k=1}^n$ and $\{v_k\}_{k=1}^n$ are unit vectors in $\mathbb{C}^2$, and $p_k > 0$. Because $\tau$ is unital and trace-preserving, we have the identities
 	\begin{equation}\label{EMD:eq4}
 		\sum_{k=1}^n p_k |u_k\ra\la u_k| = I_2,\ \sum_{k=1}^n p_k |v_k\ra\la v_k| = I_2 \text{ and } \sum_{k=1}^np_k=2,
 	\end{equation}
 	where the last identity is obtained by taking the trace of either equation. For each $k$, let $u_k^\perp, v_k^\perp \in \mathbb{C}^2$ be two unit vectors orthogonal to $u_k$ and $v_k$ respectively. We then define the unitary matrices $U_k, V_k \in \mathbb{U}(2)$ via: $
 	U_k e_1 = u_k,\, U_k e_2 = u_k^\perp,\, V_k e_1 = v_k, \text{ and } V_k e_2 = v_k^\perp.$
 	We claim that $\tau$ can be expressed as the convex combination $\tau = \sum_{k=1}^n \frac{p_k}{2} \beta_{U_k, V_k}$. To verify this, we expand the action of $\beta_{U_k, V_k}$ on an arbitrary $X \in \mathbb{M}_2$. Utilizing the relations $I_2 = |u_k\ra\la u_k| + |u_k^\perp\ra\la u_k^\perp|$ and $I_2 = |v_k\ra\la v_k| + |v_k^\perp\ra\la v_k^\perp|$, we deduce
 	\begin{align*}
 		\beta_{U_k, V_k}(X)
 		&= \text{Tr}(X|v_k\ra\la v_k|)|u_k\ra\la u_k| + \text{Tr}(X|v_k^\perp\ra\la v_k^\perp|)|u_k^\perp\ra\la u_k^\perp| \\
 		&= \text{Tr}(X|v_k\ra\la v_k|)|u_k\ra\la u_k| + \text{Tr}\big(X(I_2 - |v_k\ra\la v_k|)\big)(I_2 - |u_k\ra\la u_k|) \\
 		&= 2\text{Tr}(X|v_k\ra\la v_k|)|u_k\ra\la u_k| + \text{Tr}(X)I_2 - \text{Tr}(X)|u_k\ra\la u_k| - \text{Tr}(X|v_k\ra\la v_k|)I_2.
 	\end{align*}
 	Summing over $k$ with weights $\frac{p_k}{2}$, the identities in \eqref{EMD:eq4} induce
 	\begin{align*}
 		&\sum_{k=1}^n \frac{p_k}{2} \beta_{U_k, V_k}(X)\\
 		&= \tau(X) + \frac{\text{Tr}(X)}{2} \bigg(\sum_{k=1}^n p_k\bigg)I_2 - \frac{\text{Tr}(X)}{2} \bigg(\sum_{k=1}^n p_k|u_k\ra\la u_k|\bigg) - \frac{1}{2} \text{Tr}\bigg(X\sum_{k=1}^n p_k|v_k\ra\la v_k|\bigg)I_2 \\
 		&= \tau(X) + \text{Tr}(X)I_2 - \frac{\text{Tr}(X)}{2}I_2 - \frac{\text{Tr}(X)}{2}I_2 \\
 		&= \tau(X).
 	\end{align*}
 	Therefore, $\tau$ is a mixed twisted dephasing channel. This completes the proof.
 \end{proof}

 To the best of our knowledge, the extreme points of the set $\mathrm{UEBTP}(d)$ have not yet been characterized. In the next section, we characterize the extreme points of the set $\mathrm{UEBTP}(d)$. In particular, we show that the twisted dephasing channels are extreme points of $\mathrm{UEBTP}(d)$. This implies, by Proposition \ref{extreme point for M_2}, that the extreme points of $\mathrm{UEBTP}(2)$ are precisely the twisted dephasing channels. However, for $d\geq 3$, there are extreme points of $\mathrm{UEBTP}(d)$ other than the twisted dephasing channels. For $d\geq 3$, we construct explicit examples of extreme points of $\mathrm{UEBTP}(d)$ other than the twisted dephasing channels, thereby showing that $\mathrm{MTD}(d)\subsetneq\mathrm{UEBTP}(d)$ for $d\geq 3$.

\section{Extreme points of the set of unital EB channels}\label{extreme point of UEBTP}

In this section, we characterize the extreme points of the set $\mathrm{UEBTP}(d)$. In particular, we show that the twisted dephasing channels are extreme points of $\mathrm{UEBTP}(d)$. But these do not exhaust all extreme points. We obtain several interesting examples of extreme points $\mathrm{UEBTP}(d)$ beyond twisted dephasing channels.

The set of all EB maps on $\mathbb{M}_d$ is denoted by $\mathrm{EB}(d)$. Let $\tau, \sigma\in\mathrm{EB}(d)$. We write $\sigma \preccurlyeq_{EB}\tau$ (read $``\sigma$ is EB-dominated by $\tau"$) if $\tau - \sigma\in\mathrm{EB}(d)$. The following theorem characterizes the extreme points of the set $\mathrm{UEBTP}(d)$.

\begin{thm}\label{EMD:thm6}
	Let $\tau\in\mathrm{UEBTP}(d)$ and define $F_0(\tau) = \{\text{Ad}_B : B \in \mathbb{M}_d, \, \text{rank}(B)=1, \, \text{Ad}_B \preccurlyeq_{EB} \tau\}$. Let $\{\text{Ad}_{B_k}\}_{k=1}^n \subseteq F_0(\tau)$ be a basis of $\mathcal{V}_{\tau}=\text{span}_{\mathbb{R}} F_0(\tau)$. Then the following are equivalent:
	\begin{enumerate}
		\item $\tau$ is an extreme point of $\mathrm{UEBTP}(d)$.
		\item The operators $\{B_k B_k^* \oplus B_k^* B_k\}_{k=1}^n \subseteq \mathbb{M}_d \oplus \mathbb{M}_d$ are linearly independent.
	\end{enumerate}
\end{thm}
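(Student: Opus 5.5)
Work inside the real vector space of Hermiticity-preserving maps on $\mathbb{M}_d$, and use the real-linear map $\Phi(\varphi)=\varphi(I_d)\oplus\varphi^*(I_d)$. For $\varphi=\text{Ad}_B$ this gives $\Phi(\text{Ad}_B)=BB^*\oplus B^*B$. The unital TP condition for an EB map $\varphi$ is exactly $\Phi(\varphi)=I_d\oplus I_d$. The plan is to identify the face of the cone $\mathrm{EB}(d)$ generated by $\tau$ with the cone spanned by $F_0(\tau)$, and then observe that extremality in $\mathrm{UEBTP}(d)$ means $\Phi$ is injective on $\mathcal{V}_\tau$.

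The first step is the face structure. If $\tau=\tfrac12(\tau_1+\tau_2)$ with $\tau_i\in\mathrm{UEBTP}(d)$, then $\tfrac12\tau_i\preccurlyeq_{EB}\tau$. Expanding $\tau_i$ via Theorem \ref{Holevo form}(2) as a sum of $\text{Ad}_{C}$ with $C$ rank one, each such $\text{Ad}_C$ (suitably scaled) satisfies $\text{Ad}_C\preccurlyeq_{EB}\tau$. Hence $\text{Ad}_C\in F_0(\tau)$ up to positive scaling, and so $\tau_i\in\mathcal{V}_\tau$. Also $\tau\in\mathcal{V}_\tau$ itself, since $\tau$ is a sum of rank-one $\text{Ad}$'s, each dominated by $\tau$.

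For (2)$\Rightarrow$(1): given $\tau=\tfrac12(\tau_1+\tau_2)$, we have $\tau_1-\tau\in\mathcal{V}_\tau$ and $\Phi(\tau_1-\tau)=0$. Injectivity of $\Phi$ on $\mathcal{V}_\tau$, which is what linear independence of $\{\Phi(\text{Ad}_{B_k})\}$ says for the basis $\{\text{Ad}_{B_k}\}$, gives $\tau_1=\tau$.

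For (1)$\Rightarrow$(2), argue by contraposition. Suppose $\sum_k c_k\,\Phi(\text{Ad}_{B_k})=0$ with real $c_k$ not all zero, and set $\eta=\sum_k c_k\text{Ad}_{B_k}\neq 0$, which is nonzero since the $\text{Ad}_{B_k}$ form a basis. We want $\tau\pm\epsilon\eta\in\mathrm{EB}(d)$ for small $\epsilon>0$. Then $\tau\pm\epsilon\eta$ are both unital and TP, because $\Phi(\eta)=0$, and $\tau$ is their midpoint, so $\tau$ is not extreme.

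The main obstacle is showing that $\tau$ lies in the relative interior of the cone generated by $F_0(\tau)$ within $\mathcal{V}_\tau$. First, $\tau\pm\epsilon\,\text{Ad}_{B_k}$: the plus sign is clearly EB. For the minus sign, $\text{Ad}_{B_k}\preccurlyeq_{EB}\tau$ gives $\tau-\text{Ad}_{B_k}\in\mathrm{EB}(d)$, hence $\tau-\epsilon\text{Ad}_{B_k}=(1-\epsilon)\tau+\epsilon(\tau-\text{Ad}_{B_k})\in\mathrm{EB}(d)$ for $\epsilon\le 1$. Then
\[
\tau+\epsilon\eta=\sum_k \tfrac1n\bigl(\tau+n\epsilon c_k\text{Ad}_{B_k}\bigr),
\]
and each summand is EB once $n\epsilon|c_k|\le1$, with sign handled by the two cases above. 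The same holds for $\tau-\epsilon\eta$.

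One should also check that $\mathcal{V}_\tau$ is finite dimensional, which is clear, and that the choice of basis does not matter. Changing the basis changes $\{\Phi(\text{Ad}_{B_k})\}$ by an invertible real linear map, so linear independence is basis-independent.
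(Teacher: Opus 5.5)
Your proposal is correct and follows essentially the same route as the paper. For (2)$\Rightarrow$(1) you show that any convex summand of $\tau$ lies in $\mathcal{V}_\tau$ and then use injectivity of $\varphi\mapsto\varphi(I_d)\oplus\varphi^*(I_d)$ on $\mathcal{V}_\tau$; for (1)$\Rightarrow$(2) you perturb $\tau$ by $\pm\epsilon\sum_k c_k\text{Ad}_{B_k}$ using $\text{Ad}_{B_k}\preccurlyeq_{EB}\tau$. Writing $\tau+\epsilon\eta$ as an average of single-term perturbations only repackages the paper's bound via $C\tau-\sigma_\pm$, and restricting to real $c_k$ is harmless because the operators $B_kB_k^*\oplus B_k^*B_k$ are Hermitian.
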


\begin{proof}
	\noindent\textbf{(1) $\Rightarrow$ (2):} Let $\tau$ be an extreme point of $\mathrm{UEBTP}(d)$. Suppose
    $\sum_{k=1}^n c_k(B_k B_k^* \oplus B_k^* B_k) = 0$. Without loss of generality we can take  $c_k \in \mathbb{R}$. Let $\sigma = \sum_{k=1}^n c_k \text{Ad}_{B_k}$, decomposing as $\sigma = \sigma_+ - \sigma_-$ where $\sigma_{\pm} = \sum_{\pm c_k > 0} |c_k| \text{Ad}_{B_k}$ are EB maps. Choose $C > \max\{ \sum_{c_k \ge 0} c_k, \sum_{c_k < 0} |c_k| \}$. Since $\text{Ad}_{B_k} \preccurlyeq_{EB} \tau$, we obtain 
	\[
	\left(C - \sum_{c_k \ge 0} c_k\right)\tau \preccurlyeq_{EB} C\tau - \sigma_+.
	\]
	Thus, $C\tau - \sigma_+$ is EB, and analogously $C\tau - \sigma_-$ is EB. For sufficiently small $\epsilon > 0$ such that $\epsilon C \le 1$, the perturbed maps $\tau_{\pm} = \tau \pm \epsilon \sigma$ are EB, since 
	\[\tau_{\pm} = C^{-1}(C\tau - \sigma_{\mp}) + (C^{-1} - \epsilon)\sigma_{\mp} + \epsilon \sigma_{\pm}.\]
	The relation $\sigma(I_d) = \sigma^*(I_d) = 0$ guarantees $\tau_{\pm} \in \mathrm{UEBTP}(d)$. Extremity of $\tau = \frac{1}{2}(\tau_+ + \tau_-)$ forces $\tau_{\pm} = \tau$, yielding $\sigma = 0$. Then, by linear independence of $\{\text{Ad}_{B_k}\}_{k=1}^n$, we have $c_k = 0$ for all $k$. Therefore, $\{B_k B_k^* \oplus B_k^* B_k\}_{k=1}^n$ is linearly independent.

	 \noindent\textbf{(2) $\Rightarrow$ (1): } Let $\{B_k B_k^* \oplus B_k^* B_k\}_{k=1}^n$ be linearly independent. Suppose $\tau = p\sigma + (1-p)\sigma'$ with $\sigma, \sigma' \in \mathrm{UEBTP}(d)$ and $p \in (0,1)$. Then, $\sigma \preccurlyeq_{EB} p^{-1}\tau$, so $\sigma \in \mathcal{V}_\tau$. Expanding $\tau = \sum_{k=1}^n a_k \text{Ad}_{B_k}$ and $\sigma = \sum_{k=1}^n b_k \text{Ad}_{B_k}$ in the given basis with scalars $a_k,b_k\in\mathbb{R}$, the unital and trace-preserving conditions imply $\tau(I_d) - \sigma(I_d) = 0$ and $\tau^*(I_d) - \sigma^*(I_d) = 0$. Hence, $\sum_{k=1}^n (a_k - b_k)(B_k B_k^* \oplus B_k^* B_k) = 0$. Linear independence forces $a_k = b_k$, so $\sigma = \tau$. Therefore, $\tau$ is an extreme point of $\mathrm{UEBTP}(d)$.
\end{proof}

\begin{rmk}
	It should be noted that $\mathcal{V}_\tau$ in Theorem \ref{EMD:thm6} coincides with the known space $\text{span}_{\mathbb{R}} F(\tau)$ where $F(\tau) = \{ \sigma \in \mathrm{EB}(d) : \sigma \preccurlyeq_{EB} \lambda\tau \text{ for some } \lambda > 0 \}$ is the face of $\mathrm{EB}(d)$ containing $\tau$.
\end{rmk}

We will now discuss some consequences. Recall that $\mathscr{K}_\tau$ denotes the Kraus operator space of a CP map $\tau$ (see Remark \ref{metric operator space}).

\begin{cor}\label{extreme}
	Let $\tau \in \mathrm{UEBTP}(d)$ and define $\mathcal{W}_{\tau} = \text{span}_{\mathbb{R}}\{\text{Ad}_B : B \in \mathscr{K}_\tau, \, \text{rank}(B)=1\}$. If $\mathcal{W}_\tau$ admits a basis $\{\text{Ad}_{B_k}\}_{k=1}^m$, where   $\text{rank}(B_k)=1$ for all $k$, such that $\{B_k B_k^* \oplus B_k^* B_k\}_{k=1}^m$ is linearly independent, then $\tau$ is an extreme point of $\mathrm{UEBTP}(d)$.
\end{cor}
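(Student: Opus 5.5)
The plan is to reduce to Theorem \ref{EMD:thm6} by showing that $\mathcal{V}_\tau \subseteq \mathcal{W}_\tau$. Once we know this, the linear-independence hypothesis on the basis of $\mathcal{W}_\tau$ passes to a basis of $\mathcal{V}_\tau$.

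\textbf{Step 1: $F_0(\tau)\subseteq \mathcal{W}_\tau$.} Let $\operatorname{Ad}_B \in F_0(\tau)$ with $\operatorname{rank}(B)=1$. Then $\tau-\operatorname{Ad}_B$ is EB, hence CP. By Theorem \ref{Holevo form} it can be written as $\sum_j \operatorname{Ad}_{C_j}$. Thus $\tau=\operatorname{Ad}_B+\sum_j\operatorname{Ad}_{C_j}$ is a Kraus representation of $\tau$, and by Remark \ref{metric operator space} its Kraus operators span $\mathscr{K}_\tau$. In particular $B\in\mathscr{K}_\tau$. Since $B$ has rank one, $\operatorname{Ad}_B\in\mathcal{W}_\tau$. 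Taking real spans gives $\mathcal{V}_\tau\subseteq\mathcal{W}_\tau$.

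\textbf{Step 2: passing linear independence to $\mathcal{V}_\tau$.} Consider the real-linear map $\Lambda:\operatorname{Ad}_B\mapsto BB^*\oplus B^*B$. It is well defined on real combinations of maps, since $\sum c_k\operatorname{Ad}_{B_k}\mapsto (\sum c_k \operatorname{Ad}_{B_k})(I_d)\oplus(\sum c_k\operatorname{Ad}_{B_k})^*(I_d)$, which depends only on the map. The hypothesis says $\Lambda$ sends a basis of $\mathcal{W}_\tau$ to a linearly independent set, so $\Lambda$ is injective on $\mathcal{W}_\tau$. It is therefore injective on the subspace $\mathcal{V}_\tau$. Now take any basis $\{\operatorname{Ad}_{B_k'}\}\subseteq F_0(\tau)$ of $\mathcal{V}_\tau$. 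Its image $\{B_k'B_k'^*\oplus B_k'^*B_k'\}$ under the injective linear map $\Lambda$ is linearly independent. Theorem \ref{EMD:thm6}, (2)$\Rightarrow$(1), then gives that $\tau$ is extreme.

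\textbf{Main obstacle.} The only subtle point is whether Step 1 legitimately places $B$ in $\mathscr{K}_\tau$, given that the statement of Theorem \ref{EMD:thm6} allows $\mathcal{V}_\tau$ to be trivial or degenerate. This is handled by the invariance of the Kraus operator space under choice of representation (Remark \ref{metric operator space}). The well-definedness of $\Lambda$ in Step 2 is the second point to check, and it is routine.
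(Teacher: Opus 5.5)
Your proposal is correct and follows essentially the paper's proof. Both show that $F_0(\tau)$ lies in the spanning set of $\mathcal{W}_\tau$, since EB-domination gives a Kraus decomposition of $\tau$ containing $B$, so $B\in\mathscr{K}_\tau$. Both then transfer linear independence through the real-linear assignment $\operatorname{Ad}_B\mapsto BB^*\oplus B^*B$. The paper does this transfer by extending a basis of $\mathcal{V}_\tau$ to a basis of $\mathcal{W}_\tau$ and using the invertible change-of-basis matrix, whereas your injectivity-of-$\Lambda$ formulation says the same thing more directly and avoids the basis extension.
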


\begin{proof}
	    Let $\{\text{Ad}_{C_k}\}_{k=1}^n$ be a basis of $\mathcal{V}_\tau$ where $\text{Ad}_{C_k}\preccurlyeq_{EB}\tau$ and $\text{rank}(C_k)=1$ for all $1\leq k\leq n$. Because EB-dominance implies CP-dominance, the set $F_0(\tau)$ is contained in the spanning set of $\mathcal{W}_\tau$. Consequently, the set $\{\mathrm{Ad}_{C_k}\}_{k=1}^n$ of $\mathcal{V}_\tau = \text{span}_{\mathbb{R}}F_0(\tau)$ forms a linearly independent subset of $\mathcal{W}_\tau$. By Theorem \ref{EMD:thm6}, it suffices to prove that the corresponding direct sums $\{C_k C_k^* \oplus C_k^* C_k\}_{k=1}^n$ are linearly independent. Extend $\{\mathrm{Ad}_{C_k}\}_{k=1}^n$ to a basis $\{\mathrm{Ad}_{C_k}\}_{k=1}^m$ of $\mathcal{W}_\tau$. It follows that $\mathrm{Ad}_{C_k} = \sum_{l=1}^m a_{kl} \mathrm{Ad}_{B_l}$, where the coefficient matrix $[a_{kl}] \in \mathbb{M}_m(\mathbb{R})$ is invertible.  This gives $C_kC_k^*=\sum_{l=1}^m a_{kl}B_lB_l^*$ and $C_k^*C_k=\sum_{l=1}^m a_{kl}B_l^*B_l$. Now, we consider $\sum_{k=1}^mc_k(C_k C_k^* \oplus C_k^* C_k)=0$ where $c_k\in\mathbb{R}$ for all $k$. These imply that $\sum_{k,l=1}^mc_ka_{kl}(B_l B_l^* \oplus B_l^* B_l)=0$. The assumed linear independence of $\{B_l B_l^* \oplus B_l^* B_l\}_{l=1}^m$ forces $\sum_{k=1}^m c_k a_{kl} = 0$ for all $l$. Since $[a_{kl}]$ is invertible, this yields $c_k = 0$ for all $k$. Therefore, the subcollection $\{C_k C_k^* \oplus C_k^* C_k\}_{k=1}^n$ is linearly independent, confirming the extremity of $\tau$.
\end{proof}

\begin{prop}
	Every $\tau \in \mathrm{EB}(d)$ admits a decomposition $\tau = \sum_{k=1}^N \text{Ad}_{A_k}$, where each $A_k$ has rank one, and $\{\text{Ad}_{A_k}\}_{k=1}^N$ is linearly independent over $\mathbb{R}$.
\end{prop}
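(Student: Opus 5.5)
This is a Carathéodory-type reduction: start from any rank-one decomposition and remove linear dependencies while keeping all coefficients nonnegative. By Theorem \ref{Holevo form}, write $\tau=\sum_{k=1}^N \text{Ad}_{A_k}$ with each $A_k$ of rank one, and choose such a representation with $N$ minimal. We claim that $\{\text{Ad}_{A_k}\}_{k=1}^N$ is then linearly independent over $\mathbb{R}$. Minimality of $N$ makes sense because the real span of all $\text{Ad}_B$ with $B$ of rank one lies in a finite-dimensional real vector space, namely the Hermiticity-preserving maps on $\mathbb{M}_d$. That said, the argument below only needs that $N$ can always be reduced when a dependency exists.

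Suppose there is a nontrivial real relation $\sum_{k=1}^N c_k \text{Ad}_{A_k}=0$. After multiplying by $-1$ if necessary, we may assume some $c_k>0$. Set
\[
t=\min\{1/c_k : c_k>0\},
\]
and attain the minimum at some index $k_0$. Then
\[
\tau=\sum_{k=1}^N (1-tc_k)\,\text{Ad}_{A_k}.
\]
Every coefficient $1-tc_k$ is nonnegative: if $c_k\le 0$ it is at least $1$, and if $c_k>0$ it is nonnegative by the choice of $t$. Moreover, the coefficient at $k_0$ vanishes.

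Since $\lambda\,\text{Ad}_{A}=\text{Ad}_{\sqrt{\lambda}A}$ for $\lambda\ge 0$, we can absorb the nonnegative coefficients into the operators. Dropping the zero terms gives a representation of $\tau$ with at most $N-1$ rank-one Kraus operators, which contradicts minimality. Hence no such relation exists, and the family $\{\text{Ad}_{A_k}\}_{k=1}^N$ is linearly independent over $\mathbb{R}$.

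The only delicate point is the sign normalization, which ensures that a positive $c_k$ exists. This is immediate, since a nontrivial relation has some nonzero coefficient. The rest of the argument is routine.
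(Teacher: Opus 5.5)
Your proof is correct and is essentially the paper's Carath\'eodory-type reduction. The differences are cosmetic: you argue by minimality of $N$ rather than iterating, and you obtain a coefficient of the needed sign by flipping the relation, where the paper uses a trace computation giving $\sum_k c_k=0$.
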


\begin{proof}
	We prove this via a standard Carath\'{e}odory-type reduction. Since $\tau \in \mathrm{EB}(d), \tau$ admits a representation $\tau = \sum_{k=1}^n p_k \text{Ad}_{|u_k\ra\la v_k|}$ for unit vectors $u_k, v_k \in \mathbb{C}^d$ and weights $p_k > 0$. If the set $\{\text{Ad}_{|u_k\ra\la v_k|}\}_{k=1}^n$ is linearly dependent over $\mathbb{R}$, there exist real scalars $c_k$, not all zero, satisfying $\sum_{k=1}^n c_k \text{Ad}_{|u_k\ra\la v_k|} = 0$. Evaluating this operator on $I_d$ and taking the trace forces $\sum_{k=1}^n c_k = 0$. Therefore, the set of coefficients must contain strictly negative values. Let $t = \min \left\{-\frac{p_k}{c_k} : c_k <0 \right\} > 0$, and let $k_0$ denote an index achieving this minimum. We can then rewrite the channel as $\tau = \sum_{k \neq k_0} (p_k + t c_k) \text{Ad}_{|u_k\ra\la v_k|}.$  By our choice of $t$, the purturbed weights $p_k + t c_k$ is non-negative and the $k_0$-th term vanishes. This reduces the number of conjugations by rank-one operators in the sum. Iterating this reduction process necessarily terminates in a linearly independent set.
\end{proof}

\begin{cor}\label{EMD:cor5}
	Let $\tau \in \mathrm{UEBTP}(d)$ be given by $\tau = \sum_{k=1}^N \text{Ad}_{A_k}$, where $\{A_k\}_{k=1}^N$ are rank-one matrices such that $\{\text{Ad}_{A_k}\}_{k=1}^N$ is linearly independent over $\mathbb{R}$. If $\tau$ is an extreme point of $\mathrm{UEBTP}(d)$, then $\{A_k A_k^* \oplus A_k^* A_k\}_{k=1}^N$ is linearly independent. The converse holds provided $\mathscr{K}_\tau$ contains no rank-one matrices other than scalar multiples of $\{A_k\}_{k=1}^N$. For any such extreme $\tau$, the value of $N$, and consequently the EB rank of $\tau$, is at most $2d^2-1$.
\end{cor}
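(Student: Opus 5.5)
The plan is to reduce everything to Theorem \ref{EMD:thm6}, using the elementary fact that each $\text{Ad}_{A_k}$ is EB-dominated by $\tau$. Indeed, $\tau-\text{Ad}_{A_k}=\sum_{l\neq k}\text{Ad}_{A_l}$ is a sum of conjugations by rank-one matrices, hence EB by Theorem \ref{Holevo form}. So $\text{Ad}_{A_k}\in F_0(\tau)$ for every $k$.

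For the first assertion, assume $\tau$ is extreme. Since $\{\text{Ad}_{A_k}\}_{k=1}^N$ is a real linearly independent subset of $\mathcal{V}_\tau$, extend it to a basis $\{\text{Ad}_{B_k}\}_{k=1}^n$ of $\mathcal{V}_\tau$ drawn from $F_0(\tau)$, with $B_k=A_k$ for $k\le N$. This is possible because $F_0(\tau)$ spans $\mathcal{V}_\tau$. Theorem \ref{EMD:thm6} (1)$\Rightarrow$(2) then gives linear independence of all $B_kB_k^*\oplus B_k^*B_k$, and in particular of the subfamily indexed by $k\le N$.

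For the converse, I would show $\mathcal{V}_\tau=\text{span}_{\mathbb{R}}\{\text{Ad}_{A_k}\}_{k=1}^N$. The key observation, used already in the proof of Corollary \ref{extreme}, is that if $\text{Ad}_B\preccurlyeq_{EB}\tau$ then also $\text{Ad}_B$ is CP-dominated by $\tau$. Hence $B\in\mathscr{K}_\tau$; this is the standard fact that Kraus operators of a CP-dominated map lie in the Kraus space, e.g.\ via the range inclusion of Choi matrices $\mathcal{J}(\text{Ad}_B)\le\mathcal{J}(\tau)$. By hypothesis, every rank-one element of $\mathscr{K}_\tau$ is a scalar multiple $cA_k$, and $\text{Ad}_{cA_k}=|c|^2\text{Ad}_{A_k}$. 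So $F_0(\tau)\subseteq\text{span}_{\mathbb{R}}\{\text{Ad}_{A_k}\}$, and the reverse inclusion was noted above. Thus $\{\text{Ad}_{A_k}\}$ is a basis of $\mathcal{V}_\tau$ consisting of elements of $F_0(\tau)$, and Theorem \ref{EMD:thm6} (2)$\Rightarrow$(1) yields extremity. The only slightly delicate point is the Kraus-space containment, and I would justify it through the Choi matrix as indicated.

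For the bound, the elements $A_kA_k^*\oplus A_k^*A_k$ are Hermitian, so they lie in the real vector space of Hermitian elements of $\mathbb{M}_d\oplus\mathbb{M}_d$, which has dimension $2d^2$. They also satisfy a linear constraint: each has the form $X\oplus Y$ with $\operatorname{Tr}X=\operatorname{Tr}Y$, since $\operatorname{Tr}(AA^*)=\operatorname{Tr}(A^*A)$. This confines them to a real subspace of dimension $2d^2-1$, so linear independence forces $N\le 2d^2-1$. Since $\tau=\sum_{k=1}^N\text{Ad}_{A_k}$ is a rank-one decomposition, $\operatorname{EBR}(\tau)\le N\le 2d^2-1$.
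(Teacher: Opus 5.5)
Your proposal is correct and follows the paper's proof. Both directions reduce to Theorem \ref{EMD:thm6}. For the converse, you use the containment of $F_0(\tau)$ in the rank-one part of $\mathscr{K}_\tau$, which is the mechanism of Corollary \ref{extreme}, specialized here so that $\mathcal{V}_\tau=\text{span}_{\mathbb{R}}\{\text{Ad}_{A_k}\}$ directly. The bound comes from the trace constraint $\operatorname{Tr}(AA^*)=\operatorname{Tr}(A^*A)$, and your Choi-matrix range-inclusion argument for $B\in\mathscr{K}_\tau$ supplies a detail the paper only asserts.
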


\begin{proof}
The first part follows from Theorem \ref{EMD:thm6}, while the second part follows immediately from Corollary \ref{extreme}. Finally, whenever $\tau$ is extreme, the $N$ matrices $\{A_k A_k^* \oplus A_k^* A_k\}_{k=1}^N$ are linearly independent in the $(2d^2-1)$-dimensional space $\{X\oplus Y:\, X,Y\in\mathbb{M}_d,\text{Tr}(X)=\text{Tr}(Y)\}$, yielding $N\le 2d^2-1$.
\end{proof}

We next present an example of $\tau=\sum_{k=1}^4\text{Ad}_{A_k}\in\mathrm{UEBTP}(2)$ having Choi rank $4$ (hence, not extreme) but $\{A_kA_k^*\oplus A_k^*A_k\}_{k=1}^4$ is linearly independent. This demonstrates that the condition assumed in the converse part of Corollary \ref{EMD:cor5} cannot be dropped.

\begin{eg}\label{EMD:eg5}
	Let $\{e_1,e_2\}$ be the standard coordinate basis of $\mathbb{C}^2$ and $u=\frac{1}{\sqrt{2}}(e_1+e_2), v=\frac{1}{\sqrt{2}}(e_1-e_2)$ be two unit vectors. Define the rank-one matrices in $M_2$:
	\[A_1=\frac{1}{\sqrt{2}}|e_1\ra\la e_1|,\, A_2=\frac{1}{\sqrt{2}}|e_2\ra\la u|,\, A_3=\frac{1}{\sqrt{2}}|u\ra\la e_2|\text{ and }A_4=\frac{1}{\sqrt{2}}|v\ra\la v|.\]
	And, let $\tau=\sum_{k=1}^4\text{Ad}_{A_k}$. We can verify that these matrices are linearly independent and hence Choi rank of $\tau$ is $4$. It is easy to compute the following matrices
	\[A_1A_1^*=\frac{1}{2}\begin{bmatrix}
		1 & 0\\0& 0 
	\end{bmatrix},\, A_2A_2^*=\frac{1}{2}\begin{bmatrix}
		0 & 0\\0& 1 
	\end{bmatrix},\, A_3A_3^*=\frac{1}{4}\begin{bmatrix}
		1 & 1\\1& 1
	\end{bmatrix},\, A_4A_4^*=\frac{1}{4}\begin{bmatrix}
		\ \  1 & -1\\-1& \ \ 1
	\end{bmatrix},\]
	and $A_1^*A_1=A_1A_1^*,\, A_2^*A_2=A_3A_3^*,\, A_3^*A_3=A_2A_2^*,\, A_4^*A_4=A_4A_4^*$. Using these expressions of matrices, it is evident that $\sum_{k=1}^4A_kA_k^*=\sum_{k=1}^4 A_k^*A_k=I_2$, hence $\tau\in\mathrm{UEBTP}(2)$. Having Choi rank more than $2$, $\tau$ is not extreme point of $\mathrm{UEBTP}(2)$ (by Proposition \ref{extreme point for M_2}). Using the above matrix expressions, it is straightforward to verify that the matrices $\{A_kA_k^*\oplus A_k^*A_k\}_{k=1}^4$ are linearly independent. 
\end{eg}

Now, we show that the twisted dephasing channels are extreme points of $\mathrm{UEBTP}(d)$. In particular, by Proposition \ref{extreme point for M_2}, the extreme points of $\mathrm{UEBTP}(2)$ are precisely the twisted dephasing channels. However, for $d\geq 3$, there are extreme points of $\mathrm{UEBTP}(d)$ other than the twisted dephasing channels. We construct concrete examples of extreme points of $\mathrm{UEBTP}(d)$ whose Choi rank and EB rank coincide and can take any value between $d$ and $2d-2$. We need the following lemma.

\begin{lem}\label{EMD:lem10}
	Let $d\in\mathbb{N}$ and $X\in \mathbb{M}_d$. Suppose $X = \sum_{k=1}^s c_k |u_k\ra\la v_k|$ for non-zero scalars $c_k \in \mathbb{C}$ and vectors $u_k, v_k \in \mathbb{C}^d$. Let $r_u = \dim(\text{span}\{u_k\}_{k=1}^s)$ and $r_v = \dim(\text{span}\{v_k\}_{k=1}^s)$. Then, $s \ge r_u + r_v - \text{rank}(X)$.
\end{lem}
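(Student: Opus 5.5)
We factor $X$ through $\mathbb{C}^s$. Put $P=[u_1\ \cdots\ u_s]\in\mathbb{M}_{d,s}$, $D=\operatorname{diag}(c_1,\dots,c_s)\in\mathbb{M}_s$ and $Q=[v_1\ \cdots\ v_s]\in\mathbb{M}_{d,s}$. Then
\[
X=\sum_{k=1}^s c_k|u_k\ra\la v_k| = PDQ^*,
\]
and $\text{rank}(P)=r_u$, $\text{rank}(Q^*)=\text{rank}(Q)=r_v$. Since every $c_k\neq 0$, $D$ is invertible, so $M:=DQ^*\in\mathbb{M}_{s,d}$ also has rank $r_v$. The claim thus reduces to Sylvester's rank inequality applied to the product $X=PM$, with $P\in\mathbb{M}_{d,s}$ and $M\in\mathbb{M}_{s,d}$:
\[
\text{rank}(PM)\ \ge\ \text{rank}(P)+\text{rank}(M)-s .
\]
Rearranged, this is exactly $s\ge r_u+r_v-\text{rank}(X)$.

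For completeness we prove Sylvester's inequality by rank--nullity on the restriction of $P$ to $\text{Range}(M)\subseteq\mathbb{C}^s$. Its image is $\text{Range}(PM)$, and its kernel is $\ker P\cap\text{Range}(M)$, which has dimension at most $\dim\ker P=s-r_u$. Hence
\[
\text{rank}(X)=\dim\text{Range}(M)-\dim(\ker P\cap \text{Range}(M))\ \ge\ r_v-(s-r_u).
\]

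The argument has no real obstacle. The only points needing care are the invertibility of $D$, which uses $c_k\neq0$, and the fact that $\text{rank}(Q^*)=\dim\text{span}\{v_k\}_{k=1}^s$.
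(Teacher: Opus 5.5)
Your proof is correct and follows essentially the same route as the paper: both factor $X$ through $\mathbb{C}^s$ as a $d\times s$ matrix with columns spanning $\text{span}\{u_k\}$ times an $s\times d$ matrix with rows $v_k^*$, and then apply Sylvester's rank inequality. The only differences are cosmetic: the paper absorbs the $c_k$ into the columns $c_k u_k$ rather than into a diagonal factor $D$, and it cites Sylvester's inequality where you also prove it by rank--nullity.
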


\begin{proof}
	Let $A\in \mathbb{M}_{d,s}$ be the matrix whose $k$-th column is the vector $c_ku_k$ and $B\in \mathbb{M}_{s,d}$ be the matrix whose $k$-th row is $v_k^*$.  The hypothesis reads $AB=X$ with $\text{rank}(A)=r_u$ and $\text{rank}(B)=r_v$. Then, by Sylvester's rank inequality, it follows that $\text{rank}(AB)\ge \text{rank}(A)+\text{rank}(B)-s$, hence $s\ge r_u+r_v-\text{rank}(X)$.
\end{proof}

\begin{cor}\label{EMD:cor3}
	Let $d, N \in \mathbb{N}$ with $d\leq N \le 2d-2$. Let $\{u_k\}_{k=1}^N$ and $\{v_k\}_{k=1}^N$ be unit vectors in $\mathbb{C}^d$ with weights $\{p_k\}_{k=1}^N \subseteq (0,\infty)$ satisfying:
	\begin{enumerate}
		\item $\sum_{k=1}^N p_k|u_k\ra\la u_k| = \sum_{k=1}^N p_k|v_k\ra\la v_k| = I_d$;
		\item Any $d$ vectors in $\{u_k\}_{k=1}^N$ and in $\{v_k\}_{k=1}^N$ are linearly independent;
		\item The set $\{|u_k\ra\la u_k| \oplus |v_k\ra\la v_k|\}_{k=1}^N$ is linearly independent.
	\end{enumerate}
	Then, the channel $\tau = \sum_{k=1}^N p_k\text{Ad}_{|u_k\ra\la v_k|}$ is an extreme point of $\mathrm{UEBTP}(d)$ with Choi rank and EB rank equal to $N$. In particular, twisted dephasing channels are extreme points of $\mathrm{UEBTP}(d)$.
\end{cor}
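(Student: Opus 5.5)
The plan is to use the converse part of Corollary \ref{EMD:cor5}, with $A_k=\sqrt{p_k}\,|u_k\ra\la v_k|$. Hypothesis (1) makes $\tau$ unital and trace-preserving, so $\tau\in\mathrm{UEBTP}(d)$. Since $A_kA_k^*=p_k|u_k\ra\la u_k|$ and $A_k^*A_k=p_k|v_k\ra\la v_k|$, hypothesis (3) is exactly the required linear independence of $\{A_kA_k^*\oplus A_k^*A_k\}$. It therefore remains to establish three facts. First, $\{A_k\}_{k=1}^N$ is linearly independent; this gives Choi rank $N$, and since the $A_k$ are linearly independent as matrices, $\{\text{Ad}_{A_k}\}$ is linearly independent over $\mathbb{R}$ (this follows from the Choi matrix $\sum |A_k\rangle\rangle\langle\langle A_k|$ picture, where independence of the vectorizations gives independence of the rank-one projections). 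Second, every rank-one matrix in $\mathscr{K}_\tau=\text{span}\{A_k\}$ is a scalar multiple of some $A_k$. Third, the EB rank equals $N$; this follows because $N=\operatorname{CR}(\tau)\le\operatorname{EBR}(\tau)\le N$.

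The key tool for the first two facts is Lemma \ref{EMD:lem10}. Suppose $X=\sum_{k\in S}c_k|u_k\ra\la v_k|$ with all $c_k\neq0$, where $S$ is the support, $|S|=s\le N\le 2d-2$, and $\text{rank}(X)\le 1$. By hypothesis (2), any $\min(s,d)$ of the $u_k$ are independent, so $r_u=r_v=\min(s,d)$. If $s\le d$, the lemma gives $s\ge 2s-\text{rank}(X)$, hence $s\le\text{rank}(X)\le 1$. If $s>d$, it gives $s\ge 2d-\text{rank}(X)\ge 2d-1$, which contradicts $s\le 2d-2$. So any combination of rank at most one has support of size at most one. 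Taking $X=0$ gives linear independence, and taking $\text{rank}(X)=1$ gives $X=c_kA_k/\sqrt{p_k}$. This is where the constraint $N\le 2d-2$ is used, and I expect the main care to be in this case split. One must also check that the vectors within the support inherit the independence from (2) even when $s<d$, which holds because subsets of independent sets are independent. With these facts, the converse of Corollary \ref{EMD:cor5} yields extremality.

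For twisted dephasing channels, take $N=d$ with $u_k=Ue_k$, $v_k=Ve_k$ and $p_k=1$. Condition (1) is clear, and (2) holds since these are bases. For (3), note that the projections $|u_k\ra\la u_k|$ are mutually orthogonal and hence linearly independent, which already forces independence of the direct sums. When $d=1$ the range $d\le N\le 2d-2$ is empty, but in that case the statement is trivial since $\mathrm{UEBTP}(1)$ is a single point. Alternatively, one can bypass the range restriction for $N=d$ and apply the same Lemma \ref{EMD:lem10} argument directly: for $s\le d$ the argument is identical, and $s>d$ cannot occur.
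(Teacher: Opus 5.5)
Your proposal is correct and takes essentially the same route as the paper. You reduce to the converse part of Corollary \ref{EMD:cor5}, then use Lemma \ref{EMD:lem10} together with hypothesis (2) and $N\le 2d-2$ to show that any combination of the $|u_k\ra\la v_k|$ of rank at most one has support of size at most one; this gives both the linear independence (hence Choi and EB rank $N$) and the absence of other rank-one elements in $\mathscr{K}_\tau$. Your write-up is somewhat more explicit than the paper's (uniform treatment of $\operatorname{rank}(X)\le 1$, the independence of $\{\text{Ad}_{A_k}\}$, and the twisted dephasing case including $d=1$), but the argument is the same.
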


\begin{proof}
	By Corollary \ref{EMD:cor5}, it suffices to show that $\mathscr{K}_\tau$ contains no rank-one matrices beyond scalar multiples of $\{|u_k\ra\la v_k|\}_{k=1}^N$. Suppose $|x\ra\la y| = \sum_{k\in\mathbf{I}} c_k |u_k\ra\la v_k| \in \mathscr{K}_\tau ,$ with non-zero $c_k$ and support size $s = |\mathbf{I}|$. Condition (2) implies that $r_u = r_v = \min\{s, d\}$. Lemma \ref{EMD:lem10} then forces $s \ge 2\min\{s, d\} - 1$. If $s > d$, this implies $s \ge 2d - 1$, contradicting $s \le N \le 2d - 2$. Thus, $s \le d$, yielding $s \ge 2s - 1$, which forces $s=1$. This confirms that $\mathscr{K}_\tau$ admits no other rank-one operators other than scalar multiples of $\{|u_k\ra\la v_k|\}_{k=1}^N$.
	
	It remains to verify that its Choi rank and EB rank both equal to $N$. We will do this by showing $\{|u_k\ra\la v_k|\}_{k=1}^N$ is linearly independent. Suppose $\sum_{k\in\mathbf{I}}c_k|u_k\ra\la v_k|=0$ with non-zero $c_k$ and suppose size $s=|\mathbf{I}|$. Similar to the previous argument, using Lemma \ref{EMD:lem10}, we obtain $s\ge 2\min\{s,d\}$. It is not possible to have $s>d$, because if so, it will imply $s\ge 2d$ but we are given with $s\le N\le 2d-2$. Thus, we must have $s\le d$, hence $s\ge 2s\Rightarrow s=0$. This completes the proof.
\end{proof}

 \begin{cor}\label{EMD:cor7}
	The extreme points of $\mathrm{UEBTP}(2)$ are the twisted dephasing channels.
\end{cor}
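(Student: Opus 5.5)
The claim follows by combining Proposition \ref{extreme point for M_2} with the last assertion of Corollary \ref{EMD:cor3}. We need two inclusions.

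First, every twisted dephasing channel $\beta_{U,V}$ on $\mathbb{M}_2$ is an extreme point of $\mathrm{UEBTP}(2)$. This is the ``in particular'' clause of Corollary \ref{EMD:cor3}. To check that it applies with $d=N=2$ (here $d\le N\le 2d-2$ holds since $2\le 2\le 2$), write
\[
\beta_{U,V}=\sum_{k=1}^2 \operatorname{Ad}_{|Ue_k\rangle\langle Ve_k|}
\]
with $p_k=1$, $u_k=Ue_k$ and $v_k=Ve_k$. We verify the three hypotheses in turn.
\begin{enumerate}
\item The completeness relations $\sum_k |u_k\rangle\langle u_k|=\sum_k|v_k\rangle\langle v_k|=I_2$ hold because $\{u_k\}$ and $\{v_k\}$ are orthonormal bases.
\item Any $2$ of the vectors in each family are linearly independent, since each family is a basis.
\item The two matrices $|u_k\rangle\langle u_k|\oplus|v_k\rangle\langle v_k|$, $k=1,2$, are orthogonal nonzero projections, so they are linearly independent.
\end{enumerate}
Hence every twisted dephasing channel on $\mathbb{M}_2$ is extreme.

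Conversely, let $\tau$ be an extreme point of $\mathrm{UEBTP}(2)$. By Proposition \ref{extreme point for M_2}, $\tau\in\mathrm{MTD}(2)$, so $\tau=\sum_j p_j\beta_{U_j,V_j}$ is a convex combination of twisted dephasing channels, each of which lies in $\mathrm{UEBTP}(2)$. Discard the terms with $p_j=0$. If some remaining term differed from $\tau$, we could split off that term and write $\tau$ as a nontrivial convex combination of two distinct elements of $\mathrm{UEBTP}(2)$: the term $\beta_{U_{j_0},V_{j_0}}$ itself, and the normalized sum of the other terms. This contradicts extremity. Therefore every $\beta_{U_j,V_j}$ with $p_j>0$ equals $\tau$, and $\tau$ is a twisted dephasing channel.

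There is no real obstacle. The only step needing care is confirming that the hypotheses of Corollary \ref{EMD:cor3}, in particular the range condition on $N$, are satisfied when $d=2$.
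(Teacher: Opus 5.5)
Your proposal is correct and follows the paper's proof. The paper's proof simply cites Corollary~\ref{EMD:cor3} (twisted dephasing channels are extreme, which applies here since $d=N=2$ satisfies $d\le N\le 2d-2$) together with Proposition~\ref{extreme point for M_2} ($\mathrm{UEBTP}(2)=\mathrm{MTD}(2)$); you make explicit the routine details it leaves implicit, namely the hypothesis check and the standard argument that an extreme point written as a convex combination must equal every term of positive weight.
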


\begin{proof}
	The proof is immediate by Corollary \ref{EMD:cor3} and Proposition \ref{extreme point for M_2}.
\end{proof}

\begin{eg}\label{EMD:eg4}
	Let $d \le N \le 2d-2$ and let $\zeta = e^{\frac{2\pi i}{N}}$. Define $w_k = \frac{1}{\sqrt{d}}(1, \zeta^k, \zeta^{2k}, \ldots, \zeta^{(d-1)k})^T \in \mathbb{C}^d$ for $1 \le k \le N$. Direct computation verifies the tight frame condition $\sum_{k=1}^N |w_k\ra\la w_k| = \frac{N}{d}I_d$. By the properties of Vandermonde matrices, any $d$ vectors from $\{w_k\}_{k=1}^N$ are linearly independent. Furthermore, because $d\leq N \le 2d-2 $, we have showed in the proof of Corollary \ref{EMD:cor3} that the operators $\{|w_k\ra\la w_k|\}_{k=1}^N$ are linearly independent. Therefore, by Corollary \ref{EMD:cor3}, the unital EB channel $\tau = \frac{d}{N}\sum_{k=1}^N \text{Ad}_{|w_k\ra\la w_k|}$ is extreme with Choi rank and EB rank equal to $N$. This produces a concrete examples of extreme points of $\mathrm{UEBTP}(d)$ whose Choi rank and EB rank coincide and can take any value between $d$ and $2d-2$.
\end{eg}

We conclude this section discussing another rich family of extreme points of $\mathrm{UEBTP}(d)$ whose Choi rank and EB rank coincide and can take any value between $d$ and $2d-2$ using the notion of equiangular tight frames (ETFs). The known extreme point of $\mathrm{UEBTP}(3)$ with Choi rank $4$ from \cite{HSR} emerges naturally as a special case of this framework. We recall the definition of the equiangular tight frames (ETFs).

\begin{defn}
	A set of unit vectors $\{w_k\}_{k=1}^N \subseteq \mathbb{C}^d$ (with $d \ge 1$) forms an \textit{equiangular tight frame} (ETF) if:
	\begin{enumerate}
		\item \textit{Equiangular}: $|\la w_k, w_l\ra| = \sqrt{\frac{N-d}{d(N-1)}}$ for all $1 \le k < l \le N$;
		\item \textit{Tight Frame}: $\sum_{k=1}^N |w_k\ra\la w_k| = \frac{N}{d}I_d$.
	\end{enumerate}
\end{defn}

It can be shown that if an ETF consisting of $N$ vectors in $\mathbb{C}^d$ exists, then $d\leq N\leq d^2$ (see, for instance, \cite[Theorem 5]{TroJA05}). ETFs of size $N=d$ (orthonormal bases) and $N=d+1$ (vertices of regular simplex centred at origin \cite{CK, SH}) exist in every dimension $d$. When $N=d^2$, the corresponding existence problem is known as the SIC POVM problem. SIC POVMs are known to exist in many dimensions, but their existence for every $d$ remains an open problem. In general, determining the pairs $(d,N)$ for which ETFs exist is a nontrivial and interesting problem. For further details on the existence of ETFs, we refer the reader to \cite{S, STDH}. For any set of unit vectors $\{w_k\}_{k=1}^N \subseteq \mathbb{C}^d$ spanning an $r$-dimensional subspace, the Welch bound \cite{W} states that
\begin{equation}\label{EMD:eq2}
	\max_{k \ne l} |\la w_k, w_l\ra| \ge \sqrt{\frac{N-r}{r(N-1)}}.
\end{equation}
Alternative proofs of this bound using Gram matrix are presented in \cite{SH, STDH}.

\begin{cor}\label{EMD:cor4}
	Let $\{w_k\}_{k=1}^N \subseteq\mathbb{C}^d$ be an ETF with $N \le 2d-2$. Then, the unital EB channel $\tau = \frac{d}{N}\sum_{k=1}^N \text{Ad}_{|w_k\ra\la w_k|}$ is an extreme point of $\mathrm{UEBTP}(d)$ with Choi and EB ranks equal to $N$.
\end{cor}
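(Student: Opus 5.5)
The plan is to apply Corollary \ref{EMD:cor3} with $u_k=v_k=w_k$ and $p_k=d/N$. Condition (1) is exactly the tight frame property: $\sum_k \frac{d}{N}|w_k\ra\la w_k| = I_d$, used twice. We also need $d\le N\le 2d-2$. The lower bound $N\ge d$ holds because an ETF spans $\mathbb{C}^d$ (by the tight frame identity), and the upper bound is the hypothesis. Condition (3), the linear independence of $\{|w_k\ra\la w_k|\oplus|w_k\ra\la w_k|\}$, reduces to linear independence of $\{|w_k\ra\la w_k|\}_{k=1}^N$. As in Example \ref{EMD:eg4}, the proof of Corollary \ref{EMD:cor3} shows via Lemma \ref{EMD:lem10} that this follows from condition (2) whenever $N\le 2d-2$. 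So everything reduces to verifying condition (2): any $d$ of the vectors $w_k$ are linearly independent.

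To prove condition (2), I would argue by contradiction using the Welch bound \eqref{EMD:eq2}. Suppose some $d$ vectors $\{w_k\}_{k\in S}$, $|S|=d$, are linearly dependent, so they span a subspace of dimension $r\le d-1$. Applying \eqref{EMD:eq2} to these $M=d$ unit vectors in an $r$-dimensional space gives
\[
\frac{N-d}{d(N-1)} = \max_{k\neq l\in S}|\la w_k,w_l\ra|^2 \ \ge\ \frac{d-r}{r(d-1)}\ \ge\ \frac{1}{(d-1)^2},
\]
where the equality is equiangularity. The function $r\mapsto (d-r)/(r(d-1))$ is decreasing in $r$, so its minimum over $r\le d-1$ is attained at $r=d-1$, giving the last inequality. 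It then remains to show that $\frac{N-d}{d(N-1)}<\frac{1}{(d-1)^2}$ when $N\le 2d-2$. Since $\frac{N-d}{N-1}$ is increasing in $N$, it suffices to check $N=2d-2$, where the left side is $\frac{d-2}{d(2d-3)}$. The required inequality becomes $(d-2)(d-1)^2<d(2d-3)$, which is a cubic versus quadratic comparison, and here I expect trouble.

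This numerical inequality is the main obstacle. Direct checking shows it holds only for small $d$, roughly $d\le 4$, so the crude bound $r=d-1$ is insufficient in general. I would then try a sharper argument. One option is to use the full frame structure: the Gram matrix $G$ of all $N$ vectors equals $\frac{N}{d}$ times a rank-$d$ projection, with unimodular-scaled off-diagonal entries of constant modulus $\alpha$. A dependence among $d$ of the vectors means a $d\times d$ principal submatrix $G_S=I+\alpha\Phi_S$ (with $\Phi_S$ a Hermitian matrix of unimodular off-diagonal entries) is singular, so $\alpha\Phi_S$ has eigenvalue $-1$. Combining the projection identity $G^2=\frac{N}{d}G$ with the complementary block of size $N-d\le d-2$ should then force a rank contradiction. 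Concretely, $\ker G_S\neq 0$ yields a vector annihilated by $G$'s restriction, and since $\ker G$ has dimension $N-d$, a nonzero vector supported on $S$ lies in $\ker G$. I would then bound the support of kernel vectors: the complementary principal block $\frac{N}{d}I-G$ restricted to the $N-d$ coordinates off $S$ must interact appropriately, and I would aim to show that any vector in $\ker G$ has support of size at least $d+1$, contradicting support in $S$. This is the step I would work out most carefully.

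If the Gram-matrix argument stalls, the fallback is to note that for $N=d$ (orthonormal bases) and $N=d+1$ (simplex frames) condition (2) is classical, and to handle these alongside the small-$d$ cases covered by the Welch computation. That would at least recover the $\mathrm{UEBTP}(3)$, $N=4$ example of \cite{HSR}. Once condition (2) is established, Corollary \ref{EMD:cor3} gives both extremality and the equality of Choi rank and EB rank with $N$.
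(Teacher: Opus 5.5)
There is a genuine gap, and it goes beyond the unfinished Gram-matrix step: the property you are trying to establish there is false in general. Condition (2) of Corollary~\ref{EMD:cor3} says the ETF is full spark, i.e.\ every $d$ of the $w_k$ are linearly independent. Equivalently, every nonzero vector in $\ker G$ has support at least $d+1$, which is exactly what you were aiming for. ETFs with $N\le 2d-2$ need not be full spark.

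For example, the Hesse SIC in $\mathbb{C}^3$ contains three linearly dependent vectors: the images of $(0,1,-1)/\sqrt{2}$ under powers of $\operatorname{diag}(1,e^{2\pi i/3},e^{4\pi i/3})$ all have vanishing first coordinate. Its Naimark complement is an ETF of $N=9$ vectors in $\mathbb{C}^6$, so $N\le 2d-2=10$. The suitably scaled synthesis matrices of a tight frame and its complement stack to an $N\times N$ unitary. Jacobi's complementary-minor identity therefore shows that the six complementary vectors are linearly dependent.

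So no Welch estimate or kernel-support argument can deliver condition (2). Your derivation of condition (3) from condition (2) collapses with it. The fallbacks ($N\in\{d,d+1\}$, or $d\le 4$) only cover special cases of the statement.

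The way out is to drop Corollary~\ref{EMD:cor3} and check the hypotheses of the converse in Corollary~\ref{EMD:cor5} directly, which is what the paper does.

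First, linear independence of the projectors $|w_k\ra\la w_k|$ follows from their Hilbert--Schmidt Gram matrix. This is all that condition (3) and the rank claims need. The Gram matrix is $\bigl[|\la w_k,w_l\ra|^2\bigr]_{k,l}=(1-c)I_N+cJ_N$ with $c=\frac{N-d}{d(N-1)}\in[0,1)$, which is positive definite.

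Second, to exclude extra rank-one elements of $\mathscr{K}_\tau$, apply the Welch bound \eqref{EMD:eq2} to the support of a putative rank-one element rather than to an arbitrary $d$-subset. Suppose $|x\ra\la y|=\sum_{k\in\mathbf{I}}c_k|w_k\ra\la w_k|$ with $|\mathbf{I}|=s\ge 2$. Lemma~\ref{EMD:lem10} with $u_k=v_k=w_k$ gives $r=\dim\operatorname{span}\{w_k\}_{k\in\mathbf{I}}\le\frac{s+1}{2}$. Feeding this into Welch yields $c\ge\frac{s-r}{r(s-1)}\ge\frac{1}{s+1}\ge\frac{1}{N+1}$, which is equivalent to $N\ge 2d-1$, a contradiction.

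The constraint $r\le\frac{s+1}{2}$, coming from the rank-one condition, is exactly what your estimate with $s=d$, $r\le d-1$ lacked.
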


\begin{proof}
	We first show that $\mathscr{K}_\tau$ contains no rank-one matrices other than scalar multiples of $\{|w_k\ra\la w_k|\}_{k=1}^N$. Suppose $|x\ra\la y| = \sum_{k\in\mathbf{I}} c_k |w_k\ra\la w_k| \in \mathscr{K}_\tau$ with non-zero $c_k$ and support size $s = |\mathbf{I}| \ge 2$. Lemma \ref{EMD:lem10} implies $s \ge 2r - 1$, where $r = \dim(\text{span}\{w_k\}_{k\in\mathbf{I}})$. 
	Applying the Welch bound \eqref{EMD:eq2} to the subset $\{w_k\}_{k\in\mathbf{I}}$ yields
	\[
	\frac{N-d}{d(N-1)} = \max_{k \ne l} |\la w_k, w_l\ra|^2 \ge \frac{s-r}{r(s-1)} \ge \frac{1}{s+1} \ge \frac{1}{N+1}.
	\]
	This forces $N \ge 2d-1$, which contradicts $N \le 2d - 2$. Thus, $s=1$, establishing that $\mathscr{K}_\tau$ admits no other rank-one matrices. 
	
	Next, we show that $\{|w_k\ra\la w_k|\}_{k=1}^N$ is linearly independent. We consider the Gram matrix $G \in \mathbb{M}_N$ of the projectors, defined by $G_{kl} = \text{Tr}(|w_k\ra\la w_k||w_l\ra\la w_l|)$. The equiangular property yields $G = (1-c)I_N + c J_N$, where $c = \frac{N-d}{d(N-1)}$ and $J_N$ is the matrix whose entries are all equal to one. Because $c\in(0, 1)$, $G$ is strictly positive definite, ensuring $\{|w_k\ra\la w_k|\}_{k=1}^N$ are linearly independent. Therefore, by Corollary \ref{EMD:cor5}, $\tau$ is an extreme point of $\mathrm{UEBTP}(d)$ with Choi rank and EB rank equal to $N$.
\end{proof}

\begin{rmk}
    For $N>d\ge3$, the extreme channels constructed in Example \ref{EMD:eg4} and Corollary \ref{EMD:cor4} are not twisted dephasing channels. Because both families posses EB rank (as well as Choi rank) equal to $N$, whereas every twisted dephasing channel has those ranks to be exactly $d$. This explicitly demonstrates that $\mathrm{MTD}(d) \subsetneq \mathrm{UEBTP}(d)$ for all $d \ge 3$.  
\end{rmk}

\begin{rmk}
	Corollary \ref{EMD:cor4} shows that ETFs of size $N \le 2d-2$ give rise to extreme points of $\mathrm{UEBTP}(d)$ beyond the twisted dephasing channels. By Naimark's complement, an ETF of size $N$ in $\mathbb{C}^d$ exists if and only if a complementary ETF of size $N$ exists in $\mathbb{C}^{N-d}$. For details regarding this duality, reader may refer to \cite{FicM:MayBR21, STDH}.  Thus, constructing an ETF of size $N \le 2d-2$ in dimension $d$ is equivalent to finding one of size $N \ge 2d_0+2$ in the dual dimension $d_0 = N-d$. There are several interesting construction of such ETFs in the literature. We present one example from \cite[Section 2.1.2]{SH} showing infinite families of such configurations exist: for any prime $p \ge 3$ and $l \in \mathbb{N}$, set $d_0= p^l+1$ and $N = d_0^2-d_0+1$. Then, there exist integers $0 \le m_1 < \cdots < m_{d_0} < N$ such that the $d_0(d_0-1)$ differences $\{(m_{j}-m_{j'})\mod N:\, j\ne j'\}$ take all possible non-zero values $1,\ldots, N-1$. The harmonic vectors 
	\[
	v_k = \frac{1}{\sqrt{d_0}} \left(e^{\frac{2\pi i k m_1}{N}}, e^{\frac{2\pi i k m_2}{N}}, \ldots, e^{\frac{2\pi i k m_{d_0}}{N}}\right)^T \in \mathbb{C}^{d_0} \quad \text{for }k\in\{1,\ldots,N\},
	\]
	constitute an ETF of size $N$. As $d_0\ge 4$, it is easy to verify that this size $N=d_0^2-d_0+1\ge 2d_0+2$. Its Naimark complement consequently produces the desired ETF of size $N \le 2d-2$ in dimension $d = (d_0-1)^2$, guaranteeing a rich family of extreme points of $\mathrm{UEBTP}(d)$. Based on this idea of difference sets, more generalized family of such ETFs has been constructed in \cite{XZG}.
\end{rmk}

\section{Mixed twisted dephasing channels around the completely depolarizing channel}\label{completely depolarizing channel}

In this section, we show that there exists a neighbourhood of the completely depolarizing channel in which every unital quantum channel is mixed twisted dephasing. To establish this result, we employ a matrix integral approach inspired by a technique of Watrous \cite{JW2}.

Recall that $\mathbb{M}_d$ becomes a Hilbert space with respect to the Hilbert-Schmidt inner product: $\langle X, Y\rangle=\text{Tr}(X^*Y)$. Similarly, the space of superoperators, that is, linear maps on $\mathbb{M}_d$, is equipped with its own Hilbert–Schmidt inner product, making it a Hilbert space. In fact,  this inner product between superoperators happens to be just the Hilbert-Schmidt inner product between their respective Choi matrices. Consequently, we observe that the inner product between any two CP maps is non-negative, and in particular, real. We make extensive use of this inner product. Note the curious fact that for any unital map $\varphi$, $$\langle\varphi, \delta _d\rangle=1,$$
where $\delta_d$ is the completely depolarizing channel.

The following integral representation plays a central role  both in this section and the next. In view of the observation made above, the scalar coefficients of the integrand is a real valued function. The integration is with respect to the normalized Haar measure on the unitary group.

\begin{thm}\label{EMD:thm3}
	For any dimension $d$, every unital quantum channel $\tau$ on $\mathbb{M}_d$ admits a real-weighted integral representation over the family of the twisted dephasing channels $\beta_{U,V}$. Explicitly, $\tau$ can be expressed via the formula
	\begin{align}\label{EMD:eq1}
		\tau = \int_{U,V \in \mathbb{U}(d)} \Big( (d-1)(d+1)^2 \langle \beta_{U,V}, \tau \rangle - (d-1)(d+1)^2 + 1 \Big) \beta_{U,V} \, \mathrm{d}U \mathrm{d}V.
	\end{align}
\end{thm}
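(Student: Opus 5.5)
The plan is to pass to Choi matrices and recognise the integral as a twirl over the local unitary group $\{U\otimes\overline{V}\}$, which Schur's lemma diagonalises. Put $P=\mathcal{J}(\Delta_d)=\sum_{k}|e_k\otimes e_k\ra\la e_k\otimes e_k|$ and $W=U\otimes\overline{V}$. By the computation in the proof of Proposition \ref{ChoiMatrixofTDchannels}, $\mathcal{J}(\beta_{U,V})=WPW^*$. The Hilbert--Schmidt inner product of superoperators is that of their Choi matrices, so $\la\beta_{U,V},\tau\ra=\operatorname{Tr}(WPW^*\mathcal{J}(\tau))$. The right-hand side of \eqref{EMD:eq1} then has Choi matrix
\[
K\,\mathcal{T}(\mathcal{J}(\tau))-(K-1)\int WPW^*\,\mathrm{d}U\mathrm{d}V,
\qquad K=(d-1)(d+1)^2,
\]
where
\[
\mathcal{T}(X)=\int_{U,V}\operatorname{Tr}(WPW^*X)\,WPW^*\,\mathrm{d}U\mathrm{d}V .
\]
By Theorem \ref{CJ isomorphism}(1), it suffices to show that this Choi matrix equals $\mathcal{J}(\tau)$.

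First, $\mathcal{T}$ commutes with the conjugation action $X\mapsto WXW^*$ of $\mathbb{U}(d)\times\mathbb{U}(d)$, by invariance of Haar measure. Decompose $\mathbb{M}_d\otimes\mathbb{M}_d=\mathbb{M}_d(\mathbb{C})\otimes\mathbb{M}_d(\mathbb{C})$ as
\[
\mathbb{C}I\otimes\mathbb{C}I\;\oplus\;\mathbb{M}_d^0\otimes\mathbb{C}I\;\oplus\;\mathbb{C}I\otimes\mathbb{M}_d^0\;\oplus\;\mathbb{M}_d^0\otimes\mathbb{M}_d^0 ,
\]
where $\mathbb{M}_d^0$ denotes the traceless matrices. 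These are pairwise inequivalent irreducible representations: $\mathbb{M}_d^0$ is the irreducible adjoint representation of $\mathbb{U}(d)$, and the four summands are tensor products of trivial or adjoint factors. So by Schur's lemma $\mathcal{T}$ acts as a scalar $c_\lambda$ on each summand. The standard twirl identity gives $c_\lambda=\|\Pi_\lambda(P)\|_2^2/\dim\lambda$, where $\Pi_\lambda$ is the orthogonal projection onto the summand; this follows by taking the trace of $\mathcal{T}$ restricted to the summand.

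Next I compute the components of $P$. Both partial traces of $P$ equal $I_d$, so
\[
\Pi_{\mathbb{C}I\otimes\mathbb{C}I}(P)=\tfrac{1}{d}I\otimes I,
\]
the two mixed components vanish, and
\[
\Pi_{\mathbb{M}_d^0\otimes\mathbb{M}_d^0}(P)=P-\tfrac1d I\otimes I .
\]
This gives $c_{\mathbb{C}I\otimes\mathbb{C}I}=1$. Since $\|P-\frac1dI\otimes I\|_2^2=d-2+1=d-1$ and $\dim(\mathbb{M}_d^0\otimes\mathbb{M}_d^0)=(d^2-1)^2$,
\[
c_{\mathbb{M}_d^0\otimes\mathbb{M}_d^0}=\frac{d-1}{(d^2-1)^2}=\frac1K .
\]
Also, $\int WPW^*\,\mathrm{d}U\mathrm{d}V$ is the projection of $P$ onto the invariant subspace $\mathbb{C}I\otimes\mathbb{C}I$, namely $\frac1d I\otimes I$.

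Now let $\tau$ be a unital quantum channel. By Theorem \ref{CJ isomorphism}(6),(7), both partial traces of $\mathcal{J}(\tau)$ are $I_d$. Hence $\mathcal{J}(\tau)=\frac1d I\otimes I+Y$ with $Y\in\mathbb{M}_d^0\otimes\mathbb{M}_d^0$. Then
\[
\mathcal{T}(\mathcal{J}(\tau))=\tfrac1dI\otimes I+\tfrac1K Y,
\]
and therefore
\[
K\,\mathcal{T}(\mathcal{J}(\tau))-(K-1)\tfrac1dI\otimes I=\tfrac1d I\otimes I+Y=\mathcal{J}(\tau),
\]
which is the desired identity.

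I expect the main obstacle to be the representation-theoretic step: checking that the four summands are irreducible and pairwise inequivalent under $U\otimes\overline{V}$, so that Schur's lemma applies. The complex conjugate $\overline{V}$ causes no trouble here, since $\overline{\mathbb{U}(d)}=\mathbb{U}(d)$ and the adjoint representation of $\mathbb{U}(d)$ on $\mathbb{M}_d^0$ is irreducible over $\mathbb{C}$. The second delicate point is getting the scalar $c_\lambda=\|\Pi_\lambda(P)\|_2^2/\dim\lambda$ right. A safe alternative is to note that $\operatorname{Tr}(\mathcal{T}|_\lambda)=\int\|\Pi_\lambda(WPW^*)\|_2^2=\|\Pi_\lambda(P)\|_2^2$, by invariance of the decomposition under conjugation by $W$.
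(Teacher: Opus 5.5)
Your proof is correct, but it follows a different route from the paper's.

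The paper stays at the superoperator level. It rewrites $\langle\beta_{U,V},\tau\rangle=\langle\mathrm{Ad}_U,\tau\circ\mathrm{Ad}_V\circ\Delta_d\rangle$ and does the $U$-integral with Watrous's weighted twirl (Lemma \ref{EMD:lem7}), used as a black box. This leaves $\tau\circ\int\beta_{V,V}\,\mathrm{d}V$, which it evaluates by covariance, Schur's lemma and a superoperator-trace count (Lemma \ref{EMD:lem2}). Finally it substitutes $\delta_d=\int\beta_{U,V}\,\mathrm{d}U\mathrm{d}V$ and solves for $\tau$.

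You instead diagonalise the frame operator $\mathcal{T}$ of the whole orbit $\{(U\otimes\overline V)P(U\otimes\overline V)^*\}$ in one step, using the multiplicity-free decomposition of $\mathbb{M}_d\otimes\mathbb{M}_d$ under $\mathbb{U}(d)\times\mathbb{U}(d)$. Your individual computations check out, including $\|P-\tfrac1d I\otimes I\|_2^2=d-1$ and $c=(d-1)/(d^2-1)^2=1/K$.

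Your route buys several things:
\begin{itemize}
\item It is self-contained rather than resting on Watrous's lemma.
\item It explains the constant as $K=(d^2-1)^2/\|P-\tfrac1d I\otimes I\|_2^2$.
\item It uses only the two partial-trace conditions, so the identity holds for every unital trace-preserving linear map.
\item It works verbatim with $P$ replaced by $\mathcal{J}(\varphi)$ for any unital channel $\varphi\neq\delta_d$, giving a reconstruction formula over the orbit $\{\mathrm{Ad}_U\circ\varphi\circ\mathrm{Ad}_{V^*}\}$. Taking $\varphi=\mathrm{id}_d$, where $\|\,|\xi_0\rangle\langle\xi_0|-\tfrac1d I\otimes I\|_2^2=d^2-1$, recovers Watrous's identity itself.
\end{itemize}
The paper's factorisation instead reuses established lemmas, and its Lemma \ref{integral formula} is later applied blockwise in Section \ref{conditional expectation}.

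One cosmetic point: for $d=1$ your scalar $(d-1)/(d^2-1)^2$ reads $0/0$. There, however, $\mathbb{M}_d^0\otimes\mathbb{M}_d^0=0$ and $K=0$, so the identity is trivial.
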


To prove this, we need a few lemmas.

  \begin{lem}[Watrous \cite{JW2}]\label{EMD:lem7}
	Let $d\geq 2$ and $\tau$ be a unital quantum channel on $\mathbb{M}_d$. Then, we have the following integral formula:
	\[\int_{U\in \mathbb{U}(d)}\langle\text{Ad}_U,\tau\ra\text{Ad}_U\ dU=\frac{1}{d^2-1}\tau+\frac{d^2-2}{d^2-1}\delta_d.\]
	Consequently, $\delta_d=\int_{U\in \mathbb{U}(d)}\text{Ad}_U\, \mathrm{d}U$.
\end{lem}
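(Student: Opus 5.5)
The plan is to treat $L(\sigma):=\int_{\mathbb{U}(d)}\langle \text{Ad}_U,\sigma\rangle\,\text{Ad}_U\,\mathrm{d}U$ as a linear operator on the Hilbert space of superoperators (with the Hilbert--Schmidt inner product described above). I would then use representation theory of $\mathbb{U}(d)\times\mathbb{U}(d)$ together with Schur's lemma, so that $L$ is forced to be a combination of projections onto isotypic components. The constants are then fixed by a trace computation.

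\emph{Step 1: invariance.} I first check that $L$ is self-adjoint and positive, since $L=\int |\text{Ad}_U\rangle\langle \text{Ad}_U|\,\mathrm{d}U$. Next I check the equivariance
$$L(\text{Ad}_W\circ\sigma\circ\text{Ad}_V)=\text{Ad}_W\circ L(\sigma)\circ\text{Ad}_V \quad\text{for all } W,V\in\mathbb{U}(d).$$
This follows from left/right invariance of Haar measure under $U\mapsto W^*UV^*$, together with the identity $\langle \text{Ad}_U,\text{Ad}_W\sigma\text{Ad}_V\rangle=\langle\text{Ad}_{W^*UV^*},\sigma\rangle$. That identity holds because $\sigma\mapsto \text{Ad}_W\circ\sigma\circ\text{Ad}_V$ is unitary: on Choi matrices it acts by conjugation with $W\otimes \overline{V^*}$.

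\emph{Step 2: decomposition.} Identify superoperators with $\mathbb{M}_d\otimes\mathbb{M}_d$ by the map $\sigma\leftrightarrow\sum \sigma(E_{ij})\otimes E_{ij}$. Under this identification, $\mathbb{M}_d=\mathbb{C}I\oplus\mathfrak{sl}_d$ splits the space into four mutually inequivalent irreducible $\mathbb{U}(d)\times\mathbb{U}(d)$-modules (tensors of trivial and adjoint representations, the adjoint of $\mathbb{U}(d)$ on traceless matrices being irreducible):
\[
\mathcal{S}_1=\mathbb{C}\delta_d,\quad
\mathcal{S}_2=\{X\mapsto \text{Tr}(X)A:\ \text{Tr}A=0\},\quad
\mathcal{S}_3=\{X\mapsto \text{Tr}(XB)I:\ \text{Tr}B=0\},\quad
\mathcal{S}_4=\{\sigma:\sigma(I)=0=\sigma^*(I)\}.
\]
Their dimensions are $1$, $d^2-1$, $d^2-1$ and $(d^2-1)^2$. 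By Schur's lemma, $L=\sum_i c_iP_{\mathcal{S}_i}$ for scalars $c_i$. For a unital channel $\tau$ we have $\tau-\delta_d\in\mathcal{S}_4$, so $L(\tau)=c_1\delta_d+c_4(\tau-\delta_d)$.

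\emph{Step 3: constants.} First, $\int\text{Ad}_U\,\mathrm{d}U=\delta_d$, because $\int UXU^*\,\mathrm{d}U$ commutes with every unitary and has trace $\text{Tr}X$; this gives the stated consequence. Since $\langle \text{Ad}_U,\delta_d\rangle=1$ (each $\text{Ad}_U$ is unital), we get $L(\delta_d)=\int \text{Ad}_U\,\mathrm{d}U=\delta_d$, hence $c_1=1$. For $\sigma\in\mathcal{S}_2$ the Choi matrix is $A\otimes I$, and
$$\langle\text{Ad}_U,\sigma\rangle=\langle \mathrm{vec}\,U,(A\otimes I)\,\mathrm{vec}\,U\rangle=\text{Tr}(U^*AU)=0,$$
so $c_2=0$; symmetrically $c_3=0$. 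Finally, compute the trace of $L$ as an operator:
$$\text{Tr}(L)=\int\|\text{Ad}_U\|^2\,\mathrm{d}U=\int|\text{Tr}(U^*U)|^2\,\mathrm{d}U=d^2.$$
On the other hand $\text{Tr}(L)=1+c_4(d^2-1)^2$, which gives $c_4=\frac{1}{d^2-1}$. Substituting,
$$L(\tau)=\delta_d+\frac{\tau-\delta_d}{d^2-1}=\frac{1}{d^2-1}\tau+\frac{d^2-2}{d^2-1}\delta_d,$$
as claimed.

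The main obstacle is Step 2: justifying irreducibility and inequivalence of the four pieces, so that Schur's lemma yields a single scalar on each. I would handle this by noting that $\mathfrak{sl}_d\otimes\mathfrak{sl}_d$ is an outer tensor product of irreducible representations of the two factors, hence irreducible for the product group. The four modules are pairwise inequivalent because they differ in which factor acts trivially.
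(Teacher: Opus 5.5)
The paper gives no proof of this lemma; it quotes it from Watrous, so there is no in-paper argument to match. Your proof is correct and complete. It uses the same covariance, Schur's lemma and superoperator-trace technique the paper employs for Lemma \ref{EMD:lem2}(2), applied here to the larger group $\mathbb{U}(d)\times\mathbb{U}(d)$, whose commutant on superoperators is four-dimensional rather than two-dimensional.

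A few minor points:

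\begin{itemize}
\item As written, $\sigma\mapsto \text{Ad}_W\circ\sigma\circ\text{Ad}_V$ is an anti-action in $V$. Use $\text{Ad}_W\circ\sigma\circ\text{Ad}_{V^*}$ to get a genuine representation. On Choi matrices this is conjugation by $W\otimes\overline{V}$. Irreducibility of the second factor on traceless matrices is unaffected, since $V\mapsto\overline{V}$ is a bijection of $\mathbb{U}(d)$.
\item The obstacle you flag in Step 2 has a one-line resolution. The character of the superoperator representation is $|\text{Tr}\,W|^2|\text{Tr}\,V|^2$, and $\int\!\!\int |\text{Tr}\,W|^4|\text{Tr}\,V|^4\,\mathrm{d}W\,\mathrm{d}V=2\cdot 2=4$ for $d\ge 2$. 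So the commutant is four-dimensional, and your four nonzero invariant pieces must be irreducible and pairwise inequivalent.
\item Your argument actually shows $L=P_{\mathcal{S}_1}+\frac{1}{d^2-1}P_{\mathcal{S}_4}$ on all superoperators. Hence the identity holds for every linear $\tau$ with $\tau(I_d)=\tau^*(I_d)=I_d$. Complete positivity is needed only for the weights $\langle\text{Ad}_U,\tau\rangle$ to be nonnegative, which is what Watrous uses to conclude mixed unitarity.
\end{itemize}
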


  \begin{lem}\label{EMD:lem2}
	For dimension $d\ge 2$, the twisted dephasing channels $\beta_{U,V}$ satisfy the following integral identities:
	\begin{enumerate}
		\item $\int_{U,V \in \mathbb{U}(d)} \beta_{U,V} \, \mathrm{d}U \mathrm{d}V = \delta_d$. Consequently, the completely depolarizing channel $\delta_d$ is  a mixed twisted dephasing channel.
		\item For any $U\in \mathbb{U}(d)$, 
		\[\int_{V\in \mathbb{U}(d)}\beta_{V,VU}\,\mathrm{d}V=\lambda(U)\operatorname{id}_d+(1-\lambda(U))\delta_d\;\text{ where }\lambda(U)=\frac{\text{Tr}(\Delta_d\circ\text{Ad}_{U^*})-1}{d^2-1}.\] In particular, $\int_{V \in \mathbb{U}(d)} \beta_{V,V} \, \mathrm{d}V = \frac{1}{d+1}\operatorname{id}_d + \frac{d}{d+1}\delta_d$.
	\end{enumerate}
\end{lem}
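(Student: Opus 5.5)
Part (1) follows from Haar invariance together with Lemma \ref{EMD:lem7}. Write $\beta_{U,V}=\text{Ad}_U\circ\Delta_d\circ\text{Ad}_{V^*}$ and integrate over $V$ first, keeping $U$ fixed. Since $V\mapsto V^*$ preserves Haar measure, $\int_V \Delta_d\circ\text{Ad}_{V^*}\,\mathrm{d}V=\Delta_d\circ\delta_d$. As $\Delta_d(I_d)=I_d$ and $\Delta_d$ is trace preserving, $\Delta_d\circ\delta_d=\delta_d$. Composing on the left with $\text{Ad}_U$ leaves $\delta_d$ unchanged, because $\text{Ad}_U(I_d)=I_d$. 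The $U$-integral is then trivial, and we get $\delta_d$. Since $\delta_d$ is a Haar average of twisted dephasing channels, it lies in the closed convex hull of these channels. Carath\'eodory's theorem, together with the compactness of $\mathrm{MTD}(d)$ noted earlier, places it in $\mathrm{MTD}(d)$. (Alternatively, the explicit formula for $\delta_d$ after Example \ref{EMD:eg1} averages $\Delta_d$ against the Fourier/shift unitaries.)

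For part (2), first rewrite the integrand: $\beta_{V,VU}=\text{Ad}_V\circ\Delta_d\circ\text{Ad}_{U^*}\circ\text{Ad}_{V^*}$. Set $\psi=\Delta_d\circ\text{Ad}_{U^*}$, which is a unital quantum channel. The map $\Phi=\int_V \text{Ad}_V\circ\psi\circ\text{Ad}_{V^*}\,\mathrm{d}V$ is a unitary twirl of $\psi$. It commutes with every $\text{Ad}_W$, is unital and is trace preserving. Its Choi matrix therefore commutes with all $W\otimes\overline{W}$. By Schur--Weyl duality for $U\otimes\overline{U}$, this Choi matrix lies in $\text{span}\{I\otimes I,\ |\xi_0\rangle\langle\xi_0|\}$. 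Equivalently, $\Phi=a\,\operatorname{id}_d+b\,\delta_d$. The unitality/trace-preservation condition forces $a+b=1$.

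To identify $a$, I would use a twirl-invariant linear functional. The natural choice is the superoperator trace $\text{Tr}(\varphi)=\sum_{i,j}\langle E_{ij},\varphi(E_{ij})\rangle$. It is invariant under conjugation by $\text{Ad}_V$, since $\text{Tr}(\text{Ad}_V\circ\psi\circ\text{Ad}_{V^*})=\text{Tr}(\psi)$. Moreover $\text{Tr}(\operatorname{id}_d)=d^2$ and $\text{Tr}(\delta_d)=1$, because $\delta_d$ is a rank-one projection. This gives $a d^2+(1-a)=\text{Tr}(\Delta_d\circ\text{Ad}_{U^*})$, so $a=\lambda(U)$, as claimed.

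The special case is $U=I$, where $\text{Tr}(\Delta_d)=d$ since $\Delta_d$ is a projection of rank $d$. This gives $\lambda=(d-1)/(d^2-1)=1/(d+1)$. The only step that needs care is the claim that the twirled map is a combination of $\operatorname{id}_d$ and $\delta_d$. To avoid invoking Schur--Weyl duality, I would instead cite the standard fact that a map commuting with all $\text{Ad}_W$ must preserve the irreducible decomposition $\mathbb{M}_d=\mathbb{C}I_d\oplus\{X:\text{Tr}X=0\}$. Here the adjoint representation on the traceless part is irreducible, so by Schur's lemma the map acts as a scalar on each summand.
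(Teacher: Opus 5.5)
Your proposal is correct and follows essentially the same route as the paper. Part (1) uses the twirling identity $\int \text{Ad}_V\,\mathrm{d}V=\delta_d$ together with unitality of $\Delta_d$; the paper phrases this as $\delta_d\circ\Delta_d\circ\delta_d=\delta_d$. Part (2) uses unitary covariance of the twirled map, Schur's lemma to reduce it to an affine combination of $\operatorname{id}_d$ and $\delta_d$, and the superoperator trace to identify $\lambda(U)$.
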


\begin{proof}
	Part (1) follows directly from the twirling identity $\int_{U \in \mathbb{U}(d)} \text{Ad}_U \, \mathrm{d}U = \delta_d$ and the fact that $\delta_d\circ\Delta_d\circ\delta_d=\delta_d$.
	
	To establish (2), let $\varphi_U = \int_{V \in \mathbb{U}(d)} \beta_{V,VU} \, \mathrm{d}V$. The translation invariance of the Haar measure ensures that $\varphi_U$ is unitarily covariant, that is, $\text{Ad}_W \circ \varphi_U \circ \text{Ad}_{W^*} = \varphi_U$ for all $W \in \mathbb{U}(d)$. By Schur's lemma, any such covariant map on $\mathbb{M}_d$ must be a linear combination of the identity map and the completely depolarizing channel. Because $\varphi_U$ is unital, this restricts to the affine combination $\varphi_U = \lambda(U)\, \text{id}_d + (1-\lambda(U))\,\delta_d$ for some scalar $\lambda(U) \in \mathbb{R}$. Now, we determine $\lambda(U)$ by evaluating the superoperator trace on both sides. The identity map on $\mathbb{M}_d$ has trace $d^2$, while $\delta_d$ acts as a rank-one projection onto the scalar matrices, yielding a trace of $1$. On the other hand, evaluating the trace directly from the integral definition gives $\text{Tr}(\varphi_U) =\int_{V\in \mathbb{U}(d)} \text{Tr}(\text{Ad}_V\circ\Delta_d\circ\text{Ad}_{U^*}\circ\text{Ad}_{V^*})\,\mathrm{d}V = \text{Tr}(\Delta_d\circ\text{Ad}_{U^*})$. Equating these traces yields $\lambda(U) = \frac{\text{Tr}(\Delta_d\circ\text{Ad}_{U^*})-1}{d^2-1}$, concluding the integral formula.

	In particular, if $U=I_d$, we have $\lambda(I_d)=\frac{\text{Tr}(\Delta_d)-1}{d^2-1}=\frac{1}{d+1}$ because $\Delta_d$ acts as the projection onto the $d$-dimensional diagonal algebra. Therefore, $\int_{V \in \mathbb{U}(d)} \beta_{V,V} \, \mathrm{d}V = \frac{1}{d+1}\text{id}_d + \frac{d}{d+1}\delta_d$.
\end{proof}

\begin{rmk}
	The depolarizing EB channel in part (2) of Lemma \ref{EMD:lem2} naturally appears in the context of SIC POVMs. Paulsen et al. \cite{PPPR} established that $\{w_k\}_{k=1}^{d^2}$ is a SIC POVM in $\mathbb{C}^d$ if and only if the EB rank of $\tau=\frac{1}{d}\sum_{k=1}^{d^2}\text{Ad}_{|w_k\ra\la w_k|}=\frac{1}{d+1}\operatorname{id}_d+\frac{d}{d+1}\delta_d$ is $d^2$. 
\end{rmk}

\begin{lem}\label{integral formula}
	Let $d\geq 2$ and $\tau$ be a unital quantum channel on $\mathbb{M}_d$. Then,  the following integral formula holds:
	\begin{align*}
		\int_{U,V \in \mathbb{U}(d)} \langle \beta_{U,V}, \tau \rangle \beta_{U,V} \, \mathrm{d}U \mathrm{d}V= \frac{1}{(d-1)(d+1)^2} \tau + \left( 1 - \frac{1}{(d-1)(d+1)^2} \right) \delta_d.
	\end{align*}
\end{lem}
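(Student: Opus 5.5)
The plan is to use the two-sided unitary covariance of the left-hand side together with Schur's lemma, and then fix the two unknown scalars by testing on $\delta_d$ and on $\operatorname{id}_d$. Define the linear map $L$ on the space of superoperators by $L(\varphi)=\int_{U,V}\langle\beta_{U,V},\varphi\rangle\beta_{U,V}\,\mathrm{d}U\mathrm{d}V$.

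\textbf{Covariance.} Since $\operatorname{Ad}_W^*=\operatorname{Ad}_{W^*}$ with respect to the Hilbert--Schmidt inner product on superoperators, we have $\langle \beta_{U,V},\operatorname{Ad}_W\varphi\operatorname{Ad}_{W'^*}\rangle=\langle\beta_{W^*U,W'^*V},\varphi\rangle$. Haar invariance then gives
\[L(\operatorname{Ad}_W\circ\varphi\circ\operatorname{Ad}_{W'^*})=\operatorname{Ad}_W\circ L(\varphi)\circ\operatorname{Ad}_{W'^*}\qquad\text{for all } W,W'\in\mathbb{U}(d).\]

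\textbf{Schur's lemma.} Under conjugation, $\mathbb{M}_d=\mathbb{C}I_d\oplus\mathbb{M}_d^0$ (trace-zero part) splits into two inequivalent irreducible representations of $\mathbb{U}(d)$. Hence the space of superoperators, viewed as a $\mathbb{U}(d)\times\mathbb{U}(d)$-module via $\varphi\mapsto\operatorname{Ad}_W\varphi\operatorname{Ad}_{W'^*}$, is $\bigoplus_{a,b}\operatorname{Hom}(\mathcal{H}_b,\mathcal{H}_a)$ with $\mathcal{H}_1=\mathbb{C}I_d$ and $\mathcal{H}_2=\mathbb{M}_d^0$. These are four pairwise inequivalent irreducible pieces, each of multiplicity one. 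By Schur's lemma, $L$ acts on each piece by a scalar.

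For a unital trace-preserving $\tau$, the two mixed pieces vanish. Unitality kills the component $I\mapsto\mathbb{M}_d^0$, and the trace-preserving property kills the component $\mathbb{M}_d^0\mapsto\mathbb{C}I$. Moreover, the component of $\tau$ in $\operatorname{Hom}(\mathbb{C}I,\mathbb{C}I)=\mathbb{C}\delta_d$ is exactly $\delta_d$. Hence
\[\tau=\delta_d+\tau_0,\qquad \tau_0\in\operatorname{Hom}(\mathbb{M}_d^0,\mathbb{M}_d^0),\]
and therefore $L(\tau)=a\,\delta_d+c\,\tau_0$ with constants $a,c$ independent of $\tau$. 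The claim of the lemma is precisely $a=1$ and $c=\frac{1}{(d-1)(d+1)^2}$.

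\textbf{The scalar $a$.} Since $\beta_{U,V}$ is unital, $\langle\beta_{U,V},\delta_d\rangle=1$. Lemma \ref{EMD:lem2}(1) then gives $L(\delta_d)=\int\beta_{U,V}=\delta_d$, so $a=1$.

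\textbf{The scalar $c$.} Test on $\tau=\operatorname{id}_d$, for which $\tau_0=\operatorname{id}_d-\delta_d$. Here $\langle\beta_{U,V},\operatorname{id}_d\rangle$ is the superoperator trace $\operatorname{Tr}(\Delta_d\circ\operatorname{Ad}_{V^*U})$. Substitute $V=UW$ and use Lemma \ref{EMD:lem2}(2) to integrate out $U$:
\[L(\operatorname{id}_d)=\int_W t(W)\big(\lambda(W)\operatorname{id}_d+(1-\lambda(W))\delta_d\big)\,\mathrm{d}W,\qquad t(W)=\operatorname{Tr}(\Delta_d\operatorname{Ad}_{W^*})=(d^2-1)\lambda(W)+1.\]
Thus $c=\int t\lambda\,\mathrm{d}W=\frac{1}{d^2-1}\int(t^2-t)\,\mathrm{d}W$.

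Next, $t(W)=\sum_i|W_{ii}|^2$. This is the main computational step: it needs the Haar moments $\mathbb{E}|W_{ii}|^2=\frac1d$, $\mathbb{E}|W_{ii}|^4=\frac{2}{d(d+1)}$, and $\mathbb{E}|W_{ii}|^2|W_{jj}|^2=\frac{1}{d^2-1}$ for $i\ne j$. These can be quoted from standard Weingarten calculus, or derived from the column of $W$ being uniform on the sphere. They give $\mathbb{E}t=1$ and
\[\mathbb{E}t^2=\frac{2}{d+1}+\frac{d(d-1)}{d^2-1}=\frac{d+2}{d+1}.\]
Hence $c=\frac{1}{(d+1)(d^2-1)}=\frac{1}{(d-1)(d+1)^2}$.

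\textbf{Conclusion.} Combining, $L(\tau)=\delta_d+c(\tau-\delta_d)$ for every unital quantum channel $\tau$. This is exactly the stated formula.
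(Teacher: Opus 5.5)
Your proof is correct, but it takes a different route from the paper's. The paper never sets up a Schur decomposition for $L$. It rewrites the weight as $\langle \beta_{U,V},\tau\rangle=\langle \operatorname{Ad}_U,\tau\circ\operatorname{Ad}_V\circ\Delta_d\rangle$ and applies Watrous's identity (Lemma \ref{EMD:lem7}) to the $U$-integral. This is allowed because $\tau\circ\operatorname{Ad}_V\circ\Delta_d$ is again a unital channel. The left-hand side then collapses to $\frac{1}{d^2-1}\,\tau\circ\int_V\beta_{V,V}\,\mathrm{d}V+\frac{d^2-2}{d^2-1}\,\delta_d$, and the paper only needs the special case $U=I_d$ of Lemma \ref{EMD:lem2}(2) to finish. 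In that route all fourth-order Haar moments are hidden inside Watrous's lemma, and the proof is a three-line computation.

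Your route instead uses the full $\mathbb{U}(d)\times\mathbb{U}(d)$-equivariance of $L$ and the multiplicity-free decomposition of the superoperator space to reduce everything to two scalars. You fix $a=1$ from $\delta_d$ and compute $c$ from $\operatorname{id}_d$ via $\mathbb{E}\,t=1$ and $\mathbb{E}\,t^2=\frac{d+2}{d+1}$. The covariance identity, the vanishing of the two mixed blocks, and the arithmetic all check out.

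Your approach buys independence from Watrous's lemma and a structural explanation of why the answer must be an affine combination of $\tau$ and $\delta_d$. It also makes plain that only unitality and trace preservation of $\tau$ are used, not complete positivity. The cost is the explicit moment computation.

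One small point in that computation: the mixed moment $\mathbb{E}|W_{ii}|^2|W_{jj}|^2=\frac{1}{d^2-1}$ for $i\neq j$ involves two different columns. So it does not follow from the uniformity of a single column alone. Either quote Weingarten calculus, as you suggest, or multiply $|W_{ii}|^2$ by the row identity $\sum_k|W_{jk}|^2=1$. Then use permutation invariance together with the single-column moment $\mathbb{E}|W_{ii}|^2|W_{ji}|^2=\frac{1}{d(d+1)}$.
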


\begin{proof}
		By the cyclic property of the Hilbert-Schmidt inner product, we can rewrite the scalar coefficient as $
		\langle \beta_{U,V}, \tau \rangle = \text{Tr}(\text{Ad}_V \circ \Delta_d \circ \text{Ad}_{U^*} \circ \tau) = \langle \text{Ad}_U, \tau \circ \text{Ad}_V \circ \Delta_d \rangle$. Substituting this into the integral and applying Lemma \ref{EMD:lem7} to the integration over $U$, we obtain
		\begin{align*}
			\int_{U,V \in \mathbb{U}(d)} &\langle \beta_{U,V}, \tau \rangle \beta_{U,V} \, \mathrm{d}U \mathrm{d}V \\
			&= \int_{V \in \mathbb{U}(d)} \left( \int_{U \in \mathbb{U}(d)} \langle \text{Ad}_U, \tau \circ \text{Ad}_V \circ \Delta_d \rangle \text{Ad}_U \, \mathrm{d}U \right) \circ \Delta_d \circ \text{Ad}_{V^*} \, \mathrm{d}V \\
			&= \int_{V \in \mathbb{U}(d)} \left( \frac{1}{d^2-1} \tau \circ \text{Ad}_V \circ \Delta_d + \frac{d^2-2}{d^2-1} \delta_d \right) \circ \Delta_d \circ \text{Ad}_{V^*} \, \mathrm{d}V \\
			&= \frac{1}{d^2-1} \tau \circ \left( \int_{V \in \mathbb{U}(d)} \beta_{V,V} \, \mathrm{d}V \right) + \frac{d^2-2}{d^2-1} \delta_d,
		\end{align*}
		where the final equality relies on the fact that the completely depolarizing channel $\delta_d$ is invariant under any unital, trace-preserving linear maps on $\mathbb{M}_d$. We now evaluate the remaining integral using part (2) of Lemma \ref{EMD:lem2}, which yields $\int_{V\in \mathbb{U}(d)}\beta_{V,V}\,\mathrm{d}V=\frac{1}{d+1}\operatorname{id}_d + \frac{d}{d+1}\delta_d$. Continuing the computation gives
		\begin{align*}
			\int_{U,V \in \mathbb{U}(d)} \langle \beta_{U,V}, \tau \rangle \beta_{U,V} \, \mathrm{d}U \mathrm{d}V &= \frac{1}{d^2-1} \tau \circ \left( \frac{1}{d+1} \operatorname{id}_d + \frac{d}{d+1} \delta_d \right) + \frac{d^2-2}{d^2-1} \delta_d \\
			&= \frac{1}{(d-1)(d+1)^2} \tau + \left( 1 - \frac{1}{(d-1)(d+1)^2} \right) \delta_d.
		\end{align*}
		This completes the proof.
\end{proof}

\begin{proof}[\textbf{Proof of Theorem \ref{EMD:thm3}}]
	  The proof follows from Lemma \ref{integral formula} by substituting the integral representation of $\delta_d$ given in part (1) of Lemma \ref{EMD:lem2} and then isolating $\tau$, which yields the desired identity.
\end{proof}

The following corollary provides a structural description of unital quantum channels.

  \begin{cor}\label{description of UCPTP}
	Every unital quantum channel $\tau$ on $\mathbb{M}_d$ can be written as an affine combination of the twisted dephasing channels $\beta_{U,V}$: 
	\[
	\tau = \sum_{j=1}^n a_j \beta_{U_j, V_j},
	\] where $\{a_j\}_{j=1}^n$ are real scalars and $\{U_j, V_j\}_{j=1}^n$ are unitary matrices in $\mathbb{M}_d$.
\end{cor}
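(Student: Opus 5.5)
The plan is to turn the integral identity of Theorem \ref{EMD:thm3} into a finite affine combination. Write $\tau=\int f(U,V)\,\beta_{U,V}\,\mathrm{d}U\mathrm{d}V$ with the continuous real weight
\[
f(U,V)=(d-1)(d+1)^2\langle \beta_{U,V},\tau\rangle-(d-1)(d+1)^2+1 .
\]
Let $\mathcal{S}$ be the real linear span of $\{\beta_{U,V}:U,V\in\mathbb{U}(d)\}$. It sits inside the finite-dimensional real space of Hermiticity-preserving superoperators on $\mathbb{M}_d$, so it is finite dimensional and therefore closed. The Bochner (equivalently, entrywise Riemann) integral of a continuous $\mathcal{S}$-valued function over the compact group $\mathbb{U}(d)\times\mathbb{U}(d)$ stays in the closed subspace $\mathcal{S}$. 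Hence $\tau\in\mathcal{S}$, that is, $\tau=\sum_{j=1}^n c_j\beta_{U_j,V_j}$ for some real $c_j$.

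Next I will show that these coefficients can be normalised to sum to $1$. Take the Hilbert--Schmidt inner product of both sides with $\delta_d$. Every unital map $\varphi$ satisfies $\langle\varphi,\delta_d\rangle=1$, as observed before Theorem \ref{EMD:thm3}. Each $\beta_{U,V}$ is unital, and so is $\tau$. This gives
\[
1=\langle\tau,\delta_d\rangle=\sum_j c_j\langle \beta_{U_j,V_j},\delta_d\rangle=\sum_j c_j ,
\]
so the combination is automatically affine. The alternative is to integrate $f$ directly and use Lemma \ref{EMD:lem2}(1) with $\langle\beta_{U,V},\tau\rangle$ averaged. The $\delta_d$ pairing is simpler, so I will use it.

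The only delicate step is justifying that the integral lands in the span. I will handle it by a short remark that a finite-dimensional subspace is closed, and that a continuous integrand can be approximated by Riemann sums, each lying in $\mathcal{S}$. For the case $d=1$, both sides are trivially the identity map, so nothing further is needed there.
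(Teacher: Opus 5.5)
Your proof is correct, but it takes a different route from the paper's.

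\textbf{The paper's route.} It splits the weight as $\rho=\rho_+-\rho_-$. The normalised maps $c_\pm^{-1}\tau_\pm$ then lie in the closed convex hull of $\{\beta_{U,V}\}$, and Carath\'eodory's theorem makes each of them a finite convex combination. These are recombined as $\tau=c_+(c_+^{-1}\tau_+)-c_-(c_-^{-1}\tau_-)$, with $c_+-c_-=1$.

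\textbf{Your route.} You use only that the real span $\mathcal{S}$ of the $\beta_{U,V}$ is finite dimensional, hence closed. So the integral of the real-valued weight against $\beta_{U,V}$ lands in $\mathcal{S}$, and $\tau$ is a finite real combination by definition of the span. Pairing with $\delta_d$ gives $\sum_j c_j=1$; this is just $\frac{1}{d}\mathrm{Tr}\,\varphi(I_d)=1$ for unital $\varphi$, the same unitality observation the paper uses at the end.

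\textbf{Comparison.} Your argument is shorter and avoids positive and negative parts and Carath\'eodory altogether. Of the weight it needs only that it is real-valued, and it bounds the number of terms by $\dim\mathcal{S}$. The paper's argument gives more structure. It writes $\tau$ as a difference of positive multiples of two MTD channels, with $\sum_j|a_j|\le\int|\rho|\,\mathrm{d}U\mathrm{d}V=1+2c_-$. When $\rho\ge 0$ it yields a genuine convex combination directly. That is exactly the mechanism reused in Theorem~\ref{EMD:cor2} and Theorem~\ref{EMD:cor1}, and a pure span argument cannot supply it.
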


\begin{proof}
	By Theorem \ref{EMD:thm3}, we write $\tau = \int_{U,V \in \mathbb{U}(d)} \rho(U, V) \beta_{U,V} \, \mathrm{d}U \mathrm{d}V$, where $\rho(U, V) = (d-1)(d+1)^2 \langle \beta_{U,V}, \tau \rangle - (d-1)(d+1)^2 + 1$. We decompose $\rho$ into positive and negative parts $\rho_\pm = \max\{\pm\rho, 0\}$, and subsequently define the maps $\tau_\pm = \int_{U,V \in \mathbb{U}(d)} \rho_\pm(U,V) \beta_{U,V} \, \mathrm{d}U \mathrm{d}V$ such that $\tau = \tau_+ - \tau_-$. Let $c_\pm = \int_{U,V \in \mathbb{U}(d)} \rho_\pm(U,V) \, \mathrm{d}U \mathrm{d}V$. Since the integral of $\rho$ is $1$, we have $c_+ - c_- = 1$. If $c_- = 0$, then $c_+ = 1$, placing $\tau$ in the closed convex hull of $\{\beta_{U,V}:\, U,V\in\mathbb{U}(d)\}$. By Carath\'{e}odory's theorem, $\tau$ is a finite convex combination of these channels. If $c_- > 0$, then $c_+ > 1$, and the normalized maps $c_\pm^{-1} \tau_\pm$ both reside in this closed convex hull. Applying Carath\'{e}odory's theorem again to each yields finite convex decompositions for $c_+^{-1} \tau_+$ and $c_-^{-1} \tau_-$. Combining via $\tau = c_+(c_+^{-1} \tau_+) - c_-(c_-^{-1} \tau_-)$ produces the desired finite real linear combination. Unitality of the involved maps confirms that it is an affine combination.
\end{proof}

We conclude this section with the following theorem, which plays a key role in characterizing eventually EB unital channels in continuous time.

\begin{thm}\label{EMD:cor2}
	For any dimension $d \in \mathbb{N}$, there exists an $\varepsilon > 0$ (depending only on $d$) such that every unital quantum channel $\tau$ on $\mathbb{M}_d$ satisfying $\|\tau - \delta_d\| < \varepsilon$ is a mixed twisted dephasing channel.
\end{thm}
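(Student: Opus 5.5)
The plan is to use the integral representation of Theorem \ref{EMD:thm3}. For a unital quantum channel $\tau$ we have
\[
\tau=\int_{U,V\in\mathbb{U}(d)}\rho_\tau(U,V)\,\beta_{U,V}\,\mathrm{d}U\,\mathrm{d}V,
\qquad
\rho_\tau(U,V)=(d-1)(d+1)^2\langle\beta_{U,V},\tau\rangle-(d-1)(d+1)^2+1 .
\]
If $\rho_\tau\ge 0$ everywhere, then, since $\int\rho_\tau=1$ (the integrand integrates to $\delta_d$ when $\tau=\delta_d$, or equivalently one pairs both sides with $\delta_d$ using $\langle\beta_{U,V},\delta_d\rangle=1$), $\tau$ is a probability average of twisted dephasing channels. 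Hence $\tau$ lies in the closed convex hull of $\{\beta_{U,V}\}$. This set is compact, being the continuous image of $\mathbb{U}(d)\times\mathbb{U}(d)$. By Carath\'eodory's theorem it is exactly $\mathrm{MTD}(d)$, as in the proof of Corollary \ref{description of UCPTP}. So it suffices to show that $\rho_\tau>0$ pointwise whenever $\tau$ is close to $\delta_d$. The case $d=1$ is trivial, since the only channel is $\operatorname{id}_1=\delta_1=\beta_{1,1}$, so assume $d\ge2$.

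\textbf{Step 1: compute $\rho_{\delta_d}$.} Every unital map $\varphi$ satisfies $\langle\varphi,\delta_d\rangle=1$ (noted in the paper). Therefore $\langle\beta_{U,V},\delta_d\rangle=1$ for all $U,V$, and
\[
\rho_{\delta_d}(U,V)=(d-1)(d+1)^2-(d-1)(d+1)^2+1=1 .
\]
So the density is identically $1$ at $\delta_d$, strictly inside the positive region.

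\textbf{Step 2: uniform perturbation bound.} For any unital channel $\tau$,
\[
|\rho_\tau(U,V)-1|=(d-1)(d+1)^2\,|\langle\beta_{U,V},\tau-\delta_d\rangle|\le (d-1)(d+1)^2\,\|\beta_{U,V}\|_{2}\,\|\tau-\delta_d\|_{2}
\]
by Cauchy--Schwarz in the Hilbert--Schmidt inner product on superoperators. Here $\|\beta_{U,V}\|_2$ is independent of $U,V$, since conjugation by unitaries preserves the Hilbert--Schmidt norm of superoperators. Explicitly, $\|\beta_{U,V}\|_2^2=\|\Delta_d\|_2^2=\operatorname{Tr}(\Delta_d)=d$. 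Thus
\[
|\rho_\tau-1|\le (d-1)(d+1)^2\sqrt{d}\,\|\tau-\delta_d\|_2 .
\]

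\textbf{Step 3: choose $\varepsilon$.} Since all norms on the finite-dimensional space of superoperators are equivalent, $\|\cdot\|_2\le C_d\|\cdot\|$ for a constant $C_d$ depending only on $d$, for whatever norm $\|\cdot\|$ appears in the statement. Set
\[
\varepsilon=\bigl(C_d(d-1)(d+1)^2\sqrt d\bigr)^{-1}.
\]
Then $\|\tau-\delta_d\|<\varepsilon$ gives $\rho_\tau>0$ everywhere, and the Carath\'eodory argument above shows $\tau\in\mathrm{MTD}(d)$.

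The only delicate point is the final step, passing from the Haar integral to a finite convex combination. This is handled exactly as in Corollary \ref{description of UCPTP}: the Haar average with a nonnegative density of total mass $1$ lies in the closed convex hull of the compact set of twisted dephasing channels, which by Carath\'eodory is already convex and closed, hence equals $\mathrm{MTD}(d)$.
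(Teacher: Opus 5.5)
Your proof is correct and follows the same route as the paper: the integral representation of Theorem \ref{EMD:thm3}, the observation that the weight is identically $1$ at $\delta_d$, and the Carath\'eodory passage from a nonnegative density of total mass $1$ to a finite convex combination. The one difference is in how positivity of the weight near $\delta_d$ is obtained. The paper uses continuity of $\tau\mapsto\min_{U,V}\rho(\tau,U,V)$ over the compact group, whereas your Cauchy--Schwarz estimate with $\|\beta_{U,V}\|_2=\sqrt d$ makes this quantitative and gives the explicit radius $\varepsilon=\bigl((d-1)(d+1)^2\sqrt d\bigr)^{-1}$ in the Hilbert--Schmidt norm.
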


\begin{proof}
	For a given unital quantum channel $\tau$ and unitaries $U, V \in \mathbb{U}(d)$, we define the continuous weight function $\rho(\tau, U, V) = (d-1)(d+1)^2 \langle \beta_{U,V}, \tau \rangle - (d-1)(d+1)^2 + 1$. By Theorem \ref{EMD:thm3}, $\tau$ can be expressed via the integral $\tau = \int_{U,V \in \mathbb{U}(d)} \rho(\tau, U, V) \beta_{U,V} \, \mathrm{d}U \mathrm{d}V$. We observe that $\rho(\delta_d, U, V) = 1$ identically across the unitary group. Now, we let $f(\tau) = \min_{U,V \in \mathbb{U}(d)} \rho(\tau, U, V)$ for unital quantum channel $\tau$. Because the unitary group $\mathbb{U}(d)$ is compact and $\rho$ is continuous, $f$ is well-defined and continuous with respect to $\tau$. Since $f(\delta_d) = 1$, continuity guarantees the existence of an $\varepsilon > 0$ such that $\|\tau - \delta_d\| < \varepsilon$ implies $f(\tau) > 0$. Therefore, for any channel $\tau$ within this $\varepsilon$-neighbourhood around $\delta_d$, the quantity $\rho(\tau, U, V)$ remains strictly positive for all $U,V$, yielding $\tau$ to be mixed twisted dephasing.
\end{proof}

\begin{rmk}
	A result of Gurvits and Barnum \cite[Corollary 3]{GB} states that the maximally mixed state $\frac{1}{d}I_{d^2}$ in $\mathbb{M}_{d^2}$ has a neighbourhood in which every normalized density matrix is separable. By Theorem \ref{CJ isomorphism}, this implies that there exists a neighbourhood of the completely depolarizing channel $\delta_d$ in which every unital quantum channel is EB. Theorem \ref{EMD:cor2} sharpens this result by showing that, in a neighbourhood of $\delta_d$, every unital quantum channel is not merely EB but, in fact, mixed twisted dephasing. In other words, by Proposition \ref{ChoiMatrixofTDchannels}, the maximally mixed state $\frac{1}{d}I_{d^2}$ in $\mathbb{M}_{d^2}$ has a neighbourhood in which every positive matrix with both partial traces equal to $I_d$ is not only separable but can also be expressed as a convex combination of separable projections. Also, note that our approach differs from that of Gurvits and Barnum.
\end{rmk}

\section{Mixed twisted dephasing channels around conditional expectations}\label{conditional expectation}

In this section, we study the interiority of the trace-preserving conditional expectation onto a commutative $C^*$-subalgebra of $\mathbb{M}_d$ within the set of mixed twisted dephasing channels. This serves as a key tool in our characterization of eventually EB unital channels. We begin by recalling the definition of a conditional expectation.

  \begin{defn}
	Let $\mathcal{A}$ be a unital $C^*$-subalgebra of $\mathbb{M}_d$. A \textit{conditional expectation} onto $\mathcal{A}$ is a CP map $\mathbb{E}:\mathbb{M}_d\rightarrow\mathcal{A}$ such that $\mathbb{E}(A)=A$ for all $A\in\mathcal{A}$.
 \end{defn}

Let $\mathcal{A}$ be a unital $C^*$-subalgebra of $\mathbb{M}_d$. It can be shown that $\mathbb{E}(AXB)=A\mathbb{E}(X)B$ for all $A,B\in\mathcal{A}$ and $X\in\mathbb{M}_d$ (see \cite{RP, HU}). Indeed, a conditional expectation onto $\mathcal{A}$ is not unique. However, the trace-preserving conditional expectation onto $\mathcal{A}$ is uniquely determined: it coincides with the orthogonal projection onto $\mathcal{A}$ (with respect to the Hilbert--Schmidt inner product). We denote this map by $\mathbb{E}_{\mathcal{A}}$. In particular, $\mathbb{E}_{\mathcal{A}}$ coincides with the completely depolarizing channel $\delta_d$ when $\mathcal{A}=\mathbb{C}I_d$, and with the completely dephasing channel $\Delta_d$ when $\mathcal{A}$ is the diagonal subalgebra of $\mathbb{M}_d$. Moreover, the trace-preserving conditional expectation $\mathbb{E}_{\mathcal{A}}$ admits the following integral representation:
\begin{align}\label{ce}
	\mathbb{E}_{\mathcal{A}}=\int_{U\in\mathbb{U}(\mathcal{A}')}\operatorname{Ad}_U dU,
\end{align}
where ${\mathbb{U}(\mathcal{A}')}=\mathbb{U}(d)\cap\mathcal{A}'$ is equipped with the normalized Haar measure. The following proposition shows that $\mathbb{E}_{\mathcal{A}}$ is mixed twisted dephasing channel when $\mathcal{A}$ is commutative.

  \begin{prop}\label{EMD:prop2}
	Let $\mathcal{A}$ be a $C^*$-subalgebra of $\mathbb{M}_d$. Then, \[\int_{U,V\in \mathbb{U}(\mathcal{A}')}\beta_{U,V}\ dU dV=\mathbb{E}_{\mathcal{A}}\circ\Delta_d\circ \mathbb{E}_{\mathcal{A}}.\]
	Consequently, if $\mathcal{A}$ consists of only diagonal matrices, then $\int_{U,V\in \mathbb{U}(\mathcal{A}')}\beta_{U,V}\ dU dV=\mathbb{E}_{\mathcal{A}}$. In particular, $\mathbb{E}_{\mathcal{A}}$ is mixed twisted dephasing whenever $\mathcal{A}$ is commutative.
\end{prop}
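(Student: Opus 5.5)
The identity follows by factoring the double integral into two single twirls over $\mathbb{U}(\mathcal{A}')$ and applying the integral representation \eqref{ce} twice. By definition \eqref{twisted dephasing channel}, $\beta_{U,V}=\operatorname{Ad}_U\circ\Delta_d\circ\operatorname{Ad}_{V^*}$. The integrand is a composition in which $U$ and $V$ enter separately, and composition of superoperators is bilinear. We may therefore use Fubini on the product Haar measure:
\[
\int_{U,V\in\mathbb{U}(\mathcal{A}')}\beta_{U,V}\,dU\,dV=\Big(\int_{U\in\mathbb{U}(\mathcal{A}')}\operatorname{Ad}_U\,dU\Big)\circ\Delta_d\circ\Big(\int_{V\in\mathbb{U}(\mathcal{A}')}\operatorname{Ad}_{V^*}\,dV\Big).
\]
The first factor is $\mathbb{E}_{\mathcal{A}}$ by \eqref{ce}. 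For the second factor, $\mathbb{U}(\mathcal{A}')$ is a compact group, since $\mathcal{A}'$ is a $*$-algebra and hence closed under adjoints. Its Haar measure is inversion invariant, so $\int\operatorname{Ad}_{V^*}\,dV=\int\operatorname{Ad}_V\,dV=\mathbb{E}_{\mathcal{A}}$. This gives the displayed formula.

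For the consequence, suppose $\mathcal{A}$ consists of diagonal matrices. We want to show $\mathbb{E}_{\mathcal{A}}\circ\Delta_d\circ\mathbb{E}_{\mathcal{A}}=\mathbb{E}_{\mathcal{A}}$. Let $\mathcal{D}$ denote the diagonal algebra, so $\mathcal{A}\subseteq\mathcal{D}$. Both $\mathbb{E}_{\mathcal{A}}$ and $\Delta_d=\mathbb{E}_{\mathcal{D}}$ are Hilbert--Schmidt orthogonal projections onto nested subspaces $\mathcal{A}\subseteq\mathcal{D}$. Hence $\mathbb{E}_{\mathcal{A}}\circ\Delta_d=\mathbb{E}_{\mathcal{A}}$, because projecting first onto the larger subspace and then onto the smaller one is the same as projecting directly onto the smaller one. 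Then $\mathbb{E}_{\mathcal{A}}\circ\Delta_d\circ\mathbb{E}_{\mathcal{A}}=\mathbb{E}_{\mathcal{A}}^2=\mathbb{E}_{\mathcal{A}}$.

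For the final claim, let $\mathcal{A}$ be commutative. Being a commutative $C^*$-subalgebra of $\mathbb{M}_d$, it is simultaneously unitarily diagonalizable: there is $W\in\mathbb{U}(d)$ with $\mathcal{B}:=W^*\mathcal{A}W\subseteq\mathcal{D}$. Conjugating \eqref{ce} gives $\mathbb{E}_{\mathcal{A}}=\operatorname{Ad}_W\circ\mathbb{E}_{\mathcal{B}}\circ\operatorname{Ad}_{W^*}$, since $\mathbb{U}(\mathcal{A}')=W\,\mathbb{U}(\mathcal{B}')\,W^*$ and the Haar measure is invariant under this isomorphism. By the previous step, $\mathbb{E}_{\mathcal{B}}=\int\beta_{U,V}\,dU\,dV$. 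This integral lies in the closed convex hull of the twisted dephasing channels, and that hull equals $\mathrm{MTD}(d)$ because $\mathrm{MTD}(d)$ is compact (Carath\'eodory). Thus $\mathbb{E}_{\mathcal{B}}\in\mathrm{MTD}(d)$. Finally, $\mathrm{MTD}(d)$ is closed under composition with unitary conjugations on either side, so $\mathbb{E}_{\mathcal{A}}\in\mathrm{MTD}(d)$.

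There is no serious obstacle. The only points needing care are the inversion invariance used for the $V^*$ twirl and the nested-projection argument giving $\mathbb{E}_{\mathcal{A}}\circ\Delta_d=\mathbb{E}_{\mathcal{A}}$.
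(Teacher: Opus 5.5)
Your proof is correct and follows the paper's route. The paper simply says the identity is immediate from the twirl representation of $\mathbb{E}_{\mathcal{A}}$ over $\mathbb{U}(\mathcal{A}')$, which is exactly your factorization of the double integral, with inversion invariance of Haar measure handling the $V^*$ factor; your nested-projection argument and your reduction of the commutative case to the diagonal one by simultaneous unitary diagonalization fill in details the paper leaves implicit.
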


\begin{proof}
	The proof is immediate using Equation \ref{ce}.
\end{proof}

To study the interiority of the trace-preserving conditional expectation onto a commutative $C^*$-subalgebra of $\mathbb{M}_d$ within the set of mixed twisted dephasing channels, we need the following integral representation. Let $\tau$ be a unital quantum channel on $\mathbb{M}_d$. We denote by $\mathrm{Fix}(\tau)$ the fixed-point set of $\tau$. It can be shown that $\mathrm{Fix}(\tau)$ coincides with the commutant of the Kraus operator space $\mathscr{K}_\tau$, that is, $\mathrm{Fix}(\tau)=\mathscr{K}_\tau'$ (see \cite[Theorem 2.1]{DK}).

\begin{thm}\label{EMD:thm2}
	Let $\mathcal{A} = \bigoplus_{k=1}^r \mathbb{C}I_{m_k}$ be a commutative $C^*$-subalgebra of $\mathbb{M}_d$ where $d = \sum_{k=1}^r m_k$. Let $\Sigma_{\mathcal{A}}$ be the compact, convex set defined as \[\Sigma_{\mathcal{A}}=\{\tau:\, \tau\text{ is unital quantum channel on }\mathbb{M}_d\text{ with }\mathcal{A}\subseteq\text{Fix}(\tau)\cap \text{Range}(\tau)'\}.\]  Then, there exists a continuous weight function $\rho_{\mathcal{A}}: \Sigma_{\mathcal{A}} \times \mathbb{U}(\mathcal{A}') \times \mathbb{U}(\mathcal{A}') \to \mathbb{R}$ such that every $\tau \in \Sigma_{\mathcal{A}}$ admits the following integral representation:
	\[
	\tau = \int_{U,V \in \mathbb{U}(\mathcal{A}')} \rho_{\mathcal{A}}(\tau, U, V) \beta_{U,V} \, \mathrm{d}U\mathrm{d}V.
	\]
	Furthermore, this weight satisfies $\rho_{\mathcal{A}}(\mathbb{E}_{\mathcal{A}}, U, V) \equiv 1$ for all $U,V \in \mathbb{U}(\mathcal{A}')$.
\end{thm}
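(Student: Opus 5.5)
The plan is to reduce Theorem \ref{EMD:thm2} to Theorem \ref{EMD:thm3} applied blockwise. Write $P_k$ for the minimal projections of $\mathcal{A}$, so that $\mathcal{A}'=\bigoplus_{k=1}^r\mathbb{M}_{m_k}$ and $\mathbb{U}(\mathcal{A}')=\prod_k\mathbb{U}(m_k)$.

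\textbf{Step 1: structure of $\tau\in\Sigma_{\mathcal{A}}$.} Since $\mathcal{A}\subseteq\mathrm{Fix}(\tau)=\mathscr{K}_\tau'$, every Kraus operator $K_j$ of $\tau$ commutes with each $P_k$. Hence $K_j=\bigoplus_k K_j^{(k)}$ is block diagonal. It follows that $\tau$ maps the $(k,l)$ block $P_kXP_l$ to $\sum_j K_j^{(k)}X_{kl}K_j^{(l)*}$, again in the $(k,l)$ block. The condition $\mathcal{A}\subseteq\mathrm{Range}(\tau)'$ means $\mathrm{Range}(\tau)\subseteq\mathcal{A}'$, so all off-diagonal blocks are sent to $0$. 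Therefore
\[
\tau(X)=\bigoplus_{k=1}^r\tau_k(P_kXP_k),\qquad \tau_k=\sum_j\operatorname{Ad}_{K_j^{(k)}}.
\]
Unitality and trace preservation of $\tau$, read block by block, say that each $\tau_k$ is a unital channel on $\mathbb{M}_{m_k}$. The map $\tau\mapsto\tau_k$ (compression to the $k$-th diagonal block) is linear, hence continuous. For $\tau=\mathbb{E}_{\mathcal{A}}$ we get $\tau_k=\delta_{m_k}$.

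\textbf{Step 2: blockwise twisted dephasing.} For $U=\bigoplus U_k$ and $V=\bigoplus V_k$ in $\mathbb{U}(\mathcal{A}')$, the dephasing $\Delta_d$ respects the block decomposition, since the standard basis is adapted to the blocks. Thus $\beta_{U,V}(X)=\bigoplus_k\beta^{(k)}_{U_k,V_k}(P_kXP_k)$, where $\beta^{(k)}$ denotes the twisted dephasing channel on $\mathbb{M}_{m_k}$. Let $\rho_k(\sigma,U_k,V_k)$ be the weight of Theorem \ref{EMD:thm3} on $\mathbb{M}_{m_k}$. For $m_k=1$ the formula gives $\rho_k\equiv1$, and the representation is trivially valid there.

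\textbf{Step 3: defining the weight.} Set
\[
\rho_{\mathcal{A}}(\tau,U,V)=\sum_{k=1}^r\rho_k(\tau_k,U_k,V_k)-(r-1).
\]
This is continuous, as a composition of continuous maps. Fix $k$ and integrate $\rho_k(\tau_k,U_k,V_k)\beta_{U,V}$ over $\prod_l\mathbb{U}(m_l)$ with the product Haar measure. On block $k$, Theorem \ref{EMD:thm3} yields $\tau_k$. On each block $l\neq k$ the weight does not depend on $(U_l,V_l)$, so Lemma \ref{EMD:lem2}(1) yields $\delta_{m_l}$. Here one uses that $\int\rho_k=1$, which follows by applying both sides of Theorem \ref{EMD:thm3} to the identity matrix, or by taking inner product with $\delta_{m_k}$.

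Similarly, $\int\beta_{U,V}\,\mathrm{d}U\mathrm{d}V=\bigoplus_l\delta_{m_l}\circ(\text{compression})$, which equals $\mathbb{E}_{\mathcal{A}}$ by Proposition \ref{EMD:prop2}. Summing over $k$ and subtracting $(r-1)$ times this integral, the $\delta$-terms cancel and leave exactly $\bigoplus_k\tau_k(P_k\cdot P_k)=\tau$. Finally, at $\tau=\mathbb{E}_{\mathcal{A}}$ each $\tau_k=\delta_{m_k}$, so $\rho_k\equiv1$ (as in the proof of Theorem \ref{EMD:cor2}), giving $\rho_{\mathcal{A}}\equiv r-(r-1)=1$.

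\textbf{Main obstacle.} The delicate step is Step 1: showing that the two hypotheses $\mathcal{A}\subseteq\mathrm{Fix}(\tau)$ and $\mathcal{A}\subseteq\mathrm{Range}(\tau)'$ together force $\tau$ to kill off-diagonal blocks and act as a direct sum of unital channels. This uses $\mathrm{Fix}(\tau)=\mathscr{K}_\tau'$ for the Kraus operators and checks that unitality and trace preservation descend to each block. The remaining steps are bookkeeping with product Haar measures.
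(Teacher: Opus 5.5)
Your proof is correct and is essentially the paper's argument. Since $\langle\beta_{U,V},\tau\rangle=\sum_l\langle\beta_{U_l,V_l},\tau_l\rangle$ and $\rho_k\equiv1$ when $m_k=1$, your weight $\sum_k\rho_k(\tau_k,U_k,V_k)-(r-1)$ coincides exactly with the paper's $\langle\beta_{U,V},\tau+\sum_{m_k>1}c_k\iota_k(\tau_k)\rangle+C$; the difference is only packaging, as you apply Theorem~\ref{EMD:thm3} blockwise with the product Haar measure (and Kraus commutation for the block decomposition), whereas the paper carries out the same computation through Lemmas~\ref{EMD:lem1}, \ref{EMD:lem4} and~\ref{EMD:lem8}.
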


To prove this, we need a few lemmas.

\begin{lem}\label{EMD:lem5}
	Let $\tau$ be a unital quantum channel on $\mathbb{M}_d$, and let $\mathcal{A} \subseteq \text{Fix}(\tau)$ be a $C^*$-subalgebra with minimal central projections $P_1, \dots, P_r$. Then, every block subspace $P_k \mathbb{M}_d P_l$ is invariant under $\tau$. Consequently, the commutant of the center, $Z(\mathcal{A})'$, is a reducing subspace for $\tau$.
\end{lem}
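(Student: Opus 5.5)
The plan is to combine the fact, recalled just before Theorem \ref{EMD:thm2}, that $\mathrm{Fix}(\tau)=\mathscr{K}_\tau'$ with the block-multiplication property of central projections. Since $\mathcal{A}\subseteq \mathrm{Fix}(\tau)$ and $\mathcal{A}$ is a $C^*$-algebra, each minimal central projection $P_k$ of $\mathcal{A}$ lies in $\mathcal{A}$, hence in $\mathrm{Fix}(\tau)=\mathscr{K}_\tau'$. Choose Kraus operators $\tau=\sum_j \operatorname{Ad}_{V_j}$. Then $P_kV_j=V_jP_k$ for every $j$ and $k$, and also $P_kV_j^*=V_j^*P_k$, because $\mathrm{Fix}(\tau)$ is self-adjoint (it is a commutant of a set, or directly since $\tau$ preserves adjoints).

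With this in hand, take $X=P_kYP_l\in P_k\mathbb{M}_dP_l$. We compute
\[
\tau(X)=\sum_j V_jP_kYP_lV_j^*=\sum_j P_kV_jYV_j^*P_l=P_k\tau(Y)P_l\in P_k\mathbb{M}_dP_l .
\]
This gives the invariance of each block subspace.

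For the consequence, note that the center $Z(\mathcal{A})$ is spanned by the projections $P_1,\dots,P_r$, which are mutually orthogonal and sum to $I_d$. Hence $Z(\mathcal{A})'=\{X: P_kX=XP_k\ \forall k\}=\bigoplus_k P_k\mathbb{M}_dP_k$. Its Hilbert–Schmidt orthogonal complement is $\bigoplus_{k\neq l}P_k\mathbb{M}_dP_l$. Both of these are direct sums of invariant block subspaces, so both are $\tau$-invariant. Equivalently, $\tau$ commutes with the orthogonal projection onto $Z(\mathcal{A})'$, namely $X\mapsto\sum_kP_kXP_k$. This is exactly the statement that $Z(\mathcal{A})'$ is a reducing subspace.

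The only point needing care is the appeal to $\mathrm{Fix}(\tau)=\mathscr{K}_\tau'$ from \cite[Theorem 2.1]{DK}. This identity uses that $\tau$ is a unital channel (unital and trace preserving), which is assumed. Everything else is routine.
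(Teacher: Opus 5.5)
Your proof is correct and follows the paper's argument. The paper simply asserts $\tau(P_kXP_l)=P_k\tau(X)P_l$ because $\tau$ fixes each $P_k$, and then reads off the invariance of $\bigoplus_k P_k\mathbb{M}_dP_k$ and $\bigoplus_{k\neq l}P_k\mathbb{M}_dP_l$ just as you do. Your Kraus computation via $\mathrm{Fix}(\tau)=\mathscr{K}_\tau'$ makes that step explicit; it also follows from $P_k$ lying in the multiplicative domain, which needs only unital complete positivity.

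One small correction: $P_kV_j^*=V_j^*P_k$ follows at once by taking adjoints of $P_kV_j=V_jP_k$, since $P_k=P_k^*$. Your first parenthetical reason does not work, because the commutant of an arbitrary set need not be self-adjoint; the second reason, that $\tau$ preserves adjoints, is fine.
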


\begin{proof}
	Let $X\in\mathbb{M}_d$. Since $\tau$ fixes the projections $P_k$, we have $\tau(P_k X P_l) = P_k \tau(X) P_l \in P_k \mathbb{M}_d P_l$. This invariance across all block components makes both $Z(\mathcal{A})' = \bigoplus_{k=1}^r P_k \mathbb{M}_d P_k$ and its orthogonal complement ${(Z(\mathcal{A})')}^\perp= \bigoplus_{k\neq l} P_k \mathbb{M}_d P_l$ invariant under $\tau$.
\end{proof}

Let us recall the following definition.

\begin{defn}[Direct sum]
	Let $m_1, \ldots, m_r$ be natural numbers summing to $d$, and let $\tau_k$ be a superoperator on $\mathbb{M}_{m_k}$ for $1 \le k \le r$. Their direct sum $\bigoplus_{k=1}^r \tau_k$ on $\mathbb{M}_d$ is defined as 
	\[\left(\bigoplus_{k=1}^r \tau_k\right)([X_{pq}]) = \bigoplus_{k=1}^r \tau_k(X_{kk}) \quad \text{for } X_{pq} \in \mathbb{M}_{m_p,m_q}.\] 
\end{defn}

\begin{lem}\label{EMD:lem1}
	Let $\mathcal{A} = \bigoplus_{k=1}^r \mathbb{C}I_{m_k}$ be a commutative $C^*$-subalgebra of $\mathbb{M}_d$ where $d = \sum_{k=1}^r m_k$. For any unital quantum channel $\tau$ on $\mathbb{M}_d$, the following are equivalent:
	\begin{enumerate}
		\item $\tau$ decomposes as $\tau = \bigoplus_{k=1}^r \tau_k$, where each $\tau_k$ is a unital quantum channel on $\mathbb{M}_{m_k}$;
		\item $\mathcal{A} \subseteq \text{Fix}(\tau) \cap \text{Range}(\tau)'$.
	\end{enumerate}
\end{lem}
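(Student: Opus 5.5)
We prove both directions directly, writing $P_k$ for the minimal projections of $\mathcal{A}=\bigoplus_{k=1}^r\mathbb{C}I_{m_k}$, so that $P_k$ is the identity of the $k$-th diagonal block and $\sum_k P_k=I_d$.

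\textbf{(1) $\Rightarrow$ (2).} Suppose $\tau=\bigoplus_k\tau_k$ with each $\tau_k$ a unital channel. Then $\tau(P_k)=\tau_k(I_{m_k})=P_k$, since each $\tau_k$ is unital. Hence $\mathcal{A}\subseteq\mathrm{Fix}(\tau)$. Every output $\tau(X)=\bigoplus_k\tau_k(X_{kk})$ is block diagonal, so it commutes with each $P_k$. Thus $\mathcal{A}\subseteq\mathrm{Range}(\tau)'$.

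\textbf{(2) $\Rightarrow$ (1).} Assume $\mathcal{A}\subseteq\mathrm{Fix}(\tau)\cap\mathrm{Range}(\tau)'$.

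\emph{Step 1: bimodule property.} By Lemma \ref{EMD:lem5}, or more precisely by the argument in its proof, $\tau(P_kXP_l)=P_k\tau(X)P_l$. The justification is that $P_k\in\mathrm{Fix}(\tau)=\mathscr{K}_\tau'$, so each $P_k$ commutes with every Kraus operator $V_j$. This gives $\tau(P_kXP_l)=\sum_j V_jP_kXP_lV_j^*=P_k\tau(X)P_l$.

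\emph{Step 2: off-diagonal blocks vanish.} Fix $k\neq l$ and set $Y=P_kXP_l$. Then $\tau(Y)=P_k\tau(Y)P_l$ is supported in the $(k,l)$ block. By the range hypothesis, $\tau(Y)$ also commutes with $P_k$. This gives
\[
\tau(Y)=P_k\tau(Y)=\tau(Y)P_k=P_k\tau(Y)P_lP_k=0,
\]
since $P_lP_k=0$. So $\tau$ annihilates every off-diagonal block. Combined with Step 1, this yields
\[
\tau(X)=\sum_k\tau(P_kXP_k)=\sum_k P_k\tau(P_kXP_k)P_k.
\]

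\emph{Step 3: define the block channels.} Define $\tau_k$ on $\mathbb{M}_{m_k}\cong P_k\mathbb{M}_dP_k$ by $\tau_k(Z)=P_k\tau(Z)P_k$, which by Step 1 equals $\tau(Z)$ for $Z$ in this block. Step 2 then gives exactly $\tau=\bigoplus_k\tau_k$. We check the required properties of each $\tau_k$:
\begin{itemize}
\item Complete positivity: $\tau_k$ is the compression of the CP map $\tau$ to a corner, hence CP.
\item Unitality: $\tau_k(I_{m_k})=\tau(P_k)=P_k$.
\item Trace preservation: for $Z$ in the $k$-th block, $\mathrm{Tr}\,\tau_k(Z)=\mathrm{Tr}\,\tau(Z)=\mathrm{Tr}\,Z$, using $\tau(Z)=P_k\tau(Z)P_k$.
\end{itemize}
Therefore each $\tau_k$ is a unital quantum channel, which proves (1).

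The only point needing care is Step 2, namely that the range-commutation hypothesis kills the off-diagonal blocks rather than merely preserving them. The computation there settles it.
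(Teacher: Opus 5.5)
Your proof is correct and follows the same route as the paper. Block invariance comes from Lemma \ref{EMD:lem5}, which you justify via $\mathrm{Fix}(\tau)=\mathscr{K}_\tau'$; the hypothesis $\mathrm{Range}(\tau)\subseteq\mathcal{A}'$ then kills the off-diagonal blocks, and the diagonal components are read off. The paper leaves implicit the explicit computation $\tau(P_kXP_l)=0$ for $k\neq l$ and the check that each $\tau_k$ is CP, unital and trace-preserving, and you supply both.
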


\begin{proof}
	The implication (1) $\Rightarrow$ (2) is straightforward. To establish the converse, suppose $\mathcal{A} \subseteq \text{Fix}(\tau) \cap \text{Range}(\tau)'$. Lemma \ref{EMD:lem5} ensures that every block subspace $P_k \mathbb{M}_d P_l \cong \mathbb{M}_{m_k, m_l}$ is invariant under $\tau$. Furthermore, the hypothesis $\text{Range}(\tau) \subseteq \mathcal{A}' = \bigoplus_{k=1}^r \mathbb{M}_{m_k}$ forces that the image of $\tau$ is block-diagonal. Hence, $\tau$ must annihilate all off-diagonal blocks where $k \neq l$. Therefore, $\tau$ acts only on the diagonal subspaces individually, yielding the direct sum decomposition $\tau = \bigoplus_{k=1}^r \tau_k$. Finally, since $\tau$ is a unital quantum channel, it follows that each component $\tau_k$ is a unital quantum channel on its respective domain $\mathbb{M}_{m_k}$.
\end{proof}

    Let $m_1, \ldots, m_r$ be natural numbers summing to $d$. Given a linear map $\varphi$ on $\mathbb{M}_{m_l}$, we denote by $\iota_l(\varphi)$ its canonical zero-extension to $\mathbb{M}_d$. Explicitly, $\iota_l(\phi) = \bigoplus_{k=1}^r \tau_k$, where $\tau_l = \phi$ and $\tau_k \equiv 0$ for all $k \neq l$.

    \begin{lem}\label{EMD:lem4}
    	Let $\mathcal{A} = \bigoplus_{k=1}^r \mathbb{C}I_{m_k}$ be a commutative $C^*$-subalgebra of $\mathbb{M}_d$, where $d = \sum_{k=1}^r m_k$. Then, for any unital quantum channel $\tau$ on $\mathbb{M}_d$ satisfying $\mathcal{A} \subseteq \text{Fix}(\tau) \cap \text{Range}(\tau)'$, we have the following integral formula:
    	\small
    	\begin{align*}
    		\int_{U,V \in \mathbb{U}(\mathcal{A}')} \langle \beta_{U,V}, \tau \rangle \beta_{U,V} \, \mathrm{d}U\mathrm{d}V = \tau + (r-1)\mathbb{E}_{\mathcal{A}} + \sum_{m_k>1} \left( 1 - \frac{1}{(m_k-1)(m_k+1)^2} \right) (\iota_k(\delta_{m_k}) - \iota_k(\tau_k)),
    	\end{align*}
    	\normalsize
    	where $\mathbb{U}(\mathcal{A}') = \mathbb{U}(d) \cap \mathcal{A}' = \bigoplus_{k=1}^r \mathbb{U}(m_k)$.
    \end{lem}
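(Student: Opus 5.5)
The plan is to reduce everything to the blocks and then apply the single-block formula of Lemma \ref{integral formula} blockwise. Throughout, let $P_1,\dots,P_r$ denote the minimal projections of $\mathcal{A}$.

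\textbf{Step 1: block decomposition of $\tau$ and of $\beta_{U,V}$.} By Lemma \ref{EMD:lem1}, the hypothesis $\mathcal{A}\subseteq \mathrm{Fix}(\tau)\cap\mathrm{Range}(\tau)'$ gives $\tau=\bigoplus_{k=1}^r\tau_k$, with each $\tau_k$ a unital quantum channel on $\mathbb{M}_{m_k}$. Next, take $U=\bigoplus_k U_k$ and $V=\bigoplus_k V_k$ in $\mathbb{U}(\mathcal{A}')$. The map $\mathrm{Ad}_{V^*}$ preserves each block $P_k\mathbb{M}_dP_l$. The map $\Delta_d$ kills every off-diagonal block and acts on each diagonal block as $\Delta_{m_k}$. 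Hence
\[
\beta_{U,V}=\bigoplus_{k=1}^r \beta^{(k)}_{U_k,V_k},
\]
where $\beta^{(k)}$ denotes the twisted dephasing channel on $\mathbb{M}_{m_k}$. The Haar measure on $\mathbb{U}(\mathcal{A}')$ is the product of the Haar measures on the factors $\mathbb{U}(m_k)$.

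\textbf{Step 2: splitting the inner product.} For direct sums, the maps $\iota_k(\varphi)$ and $\iota_l(\psi)$ with $k\neq l$ are Hilbert--Schmidt orthogonal. One way to see this is that their Choi matrices are supported on the mutually orthogonal subspaces $P_k\mathbb{C}^d\otimes\overline{P_k}\mathbb{C}^d$ and $P_l\mathbb{C}^d\otimes\overline{P_l}\mathbb{C}^d$. Therefore
\[
\langle\beta_{U,V},\tau\rangle=\sum_{k}\langle\beta^{(k)}_{U_k,V_k},\tau_k\rangle .
\]
Substituting this and $\beta_{U,V}=\sum_l\iota_l(\beta^{(l)}_{U_l,V_l})$ turns the integral into a double sum over pairs $(k,l)$.

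\textbf{Step 3: evaluating each term.}
\begin{itemize}
\item For $k\neq l$, the integrals over the two blocks factorize. We have $\int\langle\beta^{(k)},\tau_k\rangle=\langle\delta_{m_k},\tau_k\rangle=1$, using Lemma \ref{EMD:lem2}(1) together with unitality of $\tau_k$. Also $\int\beta^{(l)}=\delta_{m_l}$. So the $(k,l)$ term is $\iota_l(\delta_{m_l})$.
\item For $k=l$ with $m_k>1$, Lemma \ref{integral formula} gives $c_k\,\iota_k(\tau_k)+(1-c_k)\,\iota_k(\delta_{m_k})$, where $c_k=\frac{1}{(m_k-1)(m_k+1)^2}$.
\item For $m_k=1$, everything is scalar: $\beta^{(k)}=\tau_k=\delta_1=\mathrm{id}_1$. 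The term is then $\iota_k(\delta_1)=\iota_k(\tau_k)$.
\end{itemize}

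\textbf{Step 4: summing.} For each $l$, the off-diagonal terms contribute $(r-1)\,\iota_l(\delta_{m_l})$. We have $\sum_l\iota_l(\delta_{m_l})=\mathbb{E}_{\mathcal{A}}$, since this map is the Hilbert--Schmidt projection onto $\bigoplus_l\mathbb{C}I_{m_l}$. The diagonal terms with $m_k>1$ can be rewritten as $\iota_k(\tau_k)+(1-c_k)\bigl(\iota_k(\delta_{m_k})-\iota_k(\tau_k)\bigr)$. Using $\sum_k\iota_k(\tau_k)=\tau$, the sum becomes exactly the claimed formula.

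The main point needing care is Step 1–2: checking that $\beta_{U,V}$ really is the direct sum (with off-diagonal blocks annihilated) and that the superoperator inner product splits across blocks. Everything else is direct bookkeeping with the earlier lemmas.
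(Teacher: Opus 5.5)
Your proposal is correct and follows essentially the same route as the paper: decompose $\tau$ and $\beta_{U,V}$ blockwise (via Lemma \ref{EMD:lem1}), split $\langle\beta_{U,V},\tau\rangle$ as $\sum_l\langle\beta_{U_l,V_l},\tau_l\rangle$, then evaluate the $l=k$ terms with Lemma \ref{integral formula} and the $l\neq k$ terms using $\langle\delta_{m_l},\tau_l\rangle=1$ and $\int\beta_{U_k,V_k}=\delta_{m_k}$, with the trivial case $m_k=1$ handled separately. Your double sum over pairs $(k,l)$ is only a reindexing of the paper's block-by-block computation, and your Choi-matrix justification of the orthogonality across blocks fills in a step the paper states without proof.
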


    \begin{proof}
    	For $U, V \in \mathbb{U}(\mathcal{A}')$, the block structure of the commutant ensures $U = \bigoplus_{k=1}^r U_k$ and $V = \bigoplus_{k=1}^r V_k$, where $U_k, V_k \in \mathbb{U}(m_k)$. Consequently, the channel $\beta_{U,V}$ decomposes as the direct sum $\bigoplus_{k=1}^r \beta_{U_k,V_k}$. Furthermore, by Lemma \ref{EMD:lem1}, the condition on $\tau$ ensures that it similarly decomposes as $\tau = \bigoplus_{k=1}^r \tau_k$, with each $\tau_k$ being a unital quantum channel on $\mathbb{M}_{m_k}$. Thus, the integral splits block-wise:
    	\[
    	\int_{U,V \in \mathbb{U}(\mathcal{A}')} \langle \beta_{U,V}, \tau \rangle \beta_{U,V} \, \mathrm{d}U\mathrm{d}V = \bigoplus_{k=1}^r \int_{U,V \in \mathbb{U}(\mathcal{A}')} \langle \beta_{U,V}, \tau \rangle \beta_{U_k,V_k} \, \mathrm{d}U\mathrm{d}V.
    	\]
    	Now, since the inner product is additive across orthogonal blocks, we have $\langle \beta_{U,V}, \tau \rangle = \sum_{l=1}^r \langle \beta_{U_l,V_l}, \tau_l \rangle$. Therefore, we can evaluate the $k$-th summand by isolating the $l=k$ and $l \neq k$ terms. Utilizing the integral identities from Lemma \ref{EMD:lem2} and Lemma \ref{integral formula}, and noting that $\int \langle \beta_{U_l,V_l}, \tau_l \rangle \, \mathrm{d}U_l \mathrm{d}V_l = \langle \delta_{m_l}, \tau_l \rangle = 1$ due to trace preservation, we obtain for $m_k > 1$:
    	\begin{align*}
    		\int_{U,V \in \mathbb{U}(\mathcal{A}')}& \langle \beta_{U,V}, \tau \rangle \beta_{U_k,V_k} \, \mathrm{d}U \mathrm{d}V \\
    		&= \int_{U_k,V_k} \langle \beta_{U_k,V_k}, \tau_k \rangle \beta_{U_k,V_k} \, \mathrm{d}U_k \mathrm{d}V_k + \sum_{l \neq k} \int_{U,V \in \mathbb{U}(\mathcal{A}')} \langle \beta_{U_l,V_l}, \tau_l \rangle \beta_{U_k,V_k} \, \mathrm{d}U \mathrm{d}V \\
    		&= \left[ \frac{1}{(m_k-1)(m_k+1)^2} \tau_k + \left( 1 - \frac{1}{(m_k-1)(m_k+1)^2} \right) \delta_{m_k} \right] + \sum_{l \neq k} \langle \delta_{m_l}, \tau_l \rangle \delta_{m_k} \\
    		&= \tau_k + (r-1)\delta_{m_k} + \left( 1 - \frac{1}{(m_k-1)(m_k+1)^2} \right) (\delta_{m_k} - \tau_k).
    	\end{align*}
    	For the trivial case $m_k = 1$, the channels $\tau_k, \delta_{m_k}$ and $\beta_{U_k, V_k}$ coincide with the identity and the integral is exactly $\tau_k + (r-1)\delta_{m_k}$. Finally, taking the direct sum over all $k$ and identifying the conditional expectation $E_{\mathcal{A}} = \bigoplus_{k=1}^r \delta_{m_k}$ gives rise to the desired formula.
    \end{proof}

    \begin{lem}\label{EMD:lem8}
    	Let $\mathcal{A} = \bigoplus_{k=1}^r \mathbb{C}I_{m_k}$ be a commutative $C^*$-subalgebra of $\mathbb{M}_d$, where $d = \sum_{k=1}^r m_k$, and let $\varphi$ be a unital quantum channel on $\mathbb{M}_{m_k}$. Then,
    	\small
    	\[
    	\int_{U,V \in \mathbb{U}(\mathcal{A}')} \langle \beta_{U,V}, \iota_k(\varphi) \rangle \beta_{U,V} \, \mathrm{d}U\mathrm{d}V = 
    	\begin{cases}
    		\mathbb{E}_{\mathcal{A}} + \frac{1}{(m_k-1)(m_k+1)^2} \big( \iota_k(\varphi) - \iota_k(\delta_{m_k}) \big), & \text{if } m_k > 1; \\
    		\mathbb{E}_{\mathcal{A}}, & \text{if } m_k = 1.
    	\end{cases}
    	\]
    	\normalsize
    \end{lem}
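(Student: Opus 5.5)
The plan is to compute the integral block by block, exactly as in the proof of Lemma \ref{EMD:lem4}. For $U,V\in\mathbb{U}(\mathcal{A}')$ write $U=\bigoplus_{l=1}^r U_l$ and $V=\bigoplus_{l=1}^r V_l$. Then $\beta_{U,V}=\bigoplus_{l=1}^r\beta_{U_l,V_l}$, and the Haar measure on $\mathbb{U}(\mathcal{A}')$ is the product of the Haar measures on the factors $\mathbb{U}(m_l)$.

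First I will compute the scalar weight. Since $\iota_k(\varphi)$ is supported only on the $k$-th diagonal block, and the Hilbert--Schmidt inner product of superoperators is additive over orthogonal blocks (it is the inner product of Choi matrices, and both Choi matrices are block-supported), we get
\[
\langle\beta_{U,V},\iota_k(\varphi)\rangle=\langle\beta_{U_k,V_k},\varphi\rangle .
\]
So the weight depends only on $(U_k,V_k)$.

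Next I will integrate $\beta_{U,V}$ block by block. In the $k$-th block the integrand is $\langle\beta_{U_k,V_k},\varphi\rangle\beta_{U_k,V_k}$, which depends only on $(U_k,V_k)$.
\begin{itemize}
\item If $m_k>1$, Lemma \ref{integral formula} applied to $\varphi$ on $\mathbb{M}_{m_k}$ gives
\[
\frac{1}{(m_k-1)(m_k+1)^2}\varphi+\Bigl(1-\frac{1}{(m_k-1)(m_k+1)^2}\Bigr)\delta_{m_k}.
\]
\item If $m_k=1$, every map involved is the identity on $\mathbb{M}_1$, the weight equals $1$, and the block contributes $\delta_1$.
\end{itemize}
In the $l$-th block with $l\neq k$, the weight depends only on $(U_k,V_k)$ and the map only on $(U_l,V_l)$, so Fubini factorizes the integral. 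The first factor is $\int\langle\beta_{U_k,V_k},\varphi\rangle\,\mathrm{d}U_k\mathrm{d}V_k=\langle\delta_{m_k},\varphi\rangle=1$. Here Lemma \ref{EMD:lem2}(1) moves the integral inside the inner product, and the value $1$ holds because $\varphi$ is unital (the "curious fact" noted in Section \ref{completely depolarizing channel}). The second factor is $\int\beta_{U_l,V_l}\,\mathrm{d}U_l\mathrm{d}V_l=\delta_{m_l}$, again by Lemma \ref{EMD:lem2}(1). So each off-block contributes $\delta_{m_l}$.

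Finally I will assemble the blocks using $\mathbb{E}_{\mathcal{A}}=\bigoplus_l\delta_{m_l}$. The total is $\mathbb{E}_{\mathcal{A}}$ with its $k$-th block replaced by the expression above. Rewriting that block as
\[
\delta_{m_k}+\frac{1}{(m_k-1)(m_k+1)^2}(\varphi-\delta_{m_k})
\]
gives the stated formula for $m_k>1$ and $\mathbb{E}_{\mathcal{A}}$ for $m_k=1$.

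The only delicate point is the block additivity of the inner product, which handles the cross terms. It follows because the Choi matrix of $\iota_k(\varphi)$ is supported on $P_k\otimes\overline{P_k}$, while the Choi matrix of a direct sum is a sum of such block-supported pieces.
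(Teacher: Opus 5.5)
Your proposal is correct and follows essentially the same route as the paper: you reduce the weight to $\langle\beta_{U_k,V_k},\varphi\rangle$ by block support, apply Lemma \ref{integral formula} on the $k$-th block, and use Fubini with $\langle\delta_{m_k},\varphi\rangle=1$ and Lemma \ref{EMD:lem2}(1) on the remaining blocks, then reassemble via $\mathbb{E}_{\mathcal{A}}=\bigoplus_l\delta_{m_l}$ (including the separate handling of the case $m_k=1$). The paper only asserts the block additivity of the inner product, so your Choi-matrix justification of that step adds a detail the paper omits.
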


    \begin{proof}
    	The computation is analogous to that in Lemma \ref{EMD:lem4}. Since $\iota_k(\varphi)$ is supported only on the $k$-th block, the inner product reduces to $\langle \beta_{U,V}, \iota_k(\varphi) \rangle = \langle \beta_{U_k,V_k}, \varphi \rangle$. Thus, for $m_k > 1$, factoring the integral over the orthogonal blocks yields
    	\begin{align*}
    		&\int_{U,V \in \mathbb{U}(\mathcal{A}')} \langle \beta_{U,V}, \iota_k(\varphi) \rangle \beta_{U,V} \, \mathrm{d}U\mathrm{d}V \\
    		&= \bigoplus_{l=1}^r \int_{U,V \in \mathbb{U}(\mathcal{A}')} \langle \beta_{U_k,V_k}, \varphi \rangle \beta_{U_l,V_l} \, \mathrm{d}U\mathrm{d}V \\
    		&= \left[ \frac{\varphi}{(m_k-1)(m_k+1)^2} + \left(1-\frac{1}{(m_k-1)(m_k+1)^2}\right)\delta_{m_k} \right] \bigoplus \left( \bigoplus_{l \neq k} \langle \delta_{m_k}, \varphi \rangle \delta_{m_l} \right) \\
    		&= \left[ \delta_{m_k} + \frac{1}{(m_k-1)(m_k+1)^2}(\varphi - \delta_{m_k}) \right] \bigoplus \left( \bigoplus_{l \neq k} \delta_{m_l} \right) \\
    		&= \mathbb{E}_{\mathcal{A}} + \frac{1}{(m_k-1)(m_k+1)^2} \big( \iota_k(\varphi) - \iota_k(\delta_{m_k}) \big).
    	\end{align*}
    	Finally, if $m_k = 1$, the integral over the $k$-th block trivially evaluates to $\delta_{m_k}$, hence the direct sum equals to $\mathbb{E}_{\mathcal{A}}$.
    \end{proof}

    \noindent\textit{\textbf{Proof of Theorem \ref{EMD:thm2}}.}
    	We define a function $\rho_{\mathcal{A}}: \Sigma_{\mathcal{A}} \times \mathbb{U}(\mathcal{A}') \times \mathbb{U}(\mathcal{A}') \to \mathbb{R}$ such that 
    	\[
    	\rho_{\mathcal{A}}(\tau, U, V) = \langle \beta_{U,V}, \tau + \sum_{m_k>1} c_k \iota_k(\tau_k) \rangle + C,
    	\]
    	 for all $\tau\in\Sigma_{\mathcal{A}}$ and $U,V\in\mathbb{U}(\mathcal{A}')$ where $\lambda_k = (m_k-1)(m_k+1)^2, c_k=\lambda_k-1$ and $C=-(r-1)-\sum_{m_k>1}c_k$. Indeed, $\rho_{\mathcal{A}}$ is continuous. By the linearity of the integral and inner product, integrating the weight function $\rho_{\mathcal{A}}(\tau, U, V)$ against $\beta_{U,V}$ yields:
    		\[\int\limits_{U,V\in\mathbb{U}(\mathcal{A}')} \langle \beta_{U,V}, \tau \rangle \beta_{U,V} \, \mathrm{d}U\mathrm{d}V + \sum_{m_k>1} c_k \int\limits_{U,V\in\mathbb{U}(\mathcal{A}')}  \langle \beta_{U,V}, \iota_k(\tau_k) \rangle \beta_{U,V} \, \mathrm{d}U\mathrm{d}V + C \int\limits_{U,V\in\mathbb{U}(\mathcal{A}')}  \beta_{U,V} \, \mathrm{d}U\mathrm{d}V.\]
    		Using the standard identity $\int_{U,V\in\mathbb{U}(\mathcal{A}')} \beta_{U,V} \, \mathrm{d}U\mathrm{d}V = \mathbb{E}_{\mathcal{A}}$ from Proposition \ref{EMD:prop2}, and substituting the explicit formulas from Lemmas \ref{EMD:lem4} and \ref{EMD:lem8}, we obtain the expression:
    		\small
    		\begin{align*}
    			&\tau + (r-1)\mathbb{E}_{\mathcal{A}} + \sum_{m_k>1} \left(1-\frac{1}{\lambda_k}\right)\big(\iota_k(\delta_{m_k}) - \iota_k(\tau_k)\big) + \sum_{m_k>1} c_k \left( E_{\mathcal{A}} + \frac{1}{\lambda_k} \big(\iota_k(\tau_k) - \iota_k(\delta_{m_k})\big) \right) + C \mathbb{E}_{\mathcal{A}}\\
    			&=\tau+\left(r-1+\sum_{m_k>1}c_k +C\right)\mathbb{E}_{\mathcal{A}}+\sum_{m_k>1}\left(1-\frac{1}{\lambda_k}-\frac{c_k}{\lambda_k}\right)\big(\iota_k(\delta_{m_k}) - \iota_k(\tau_k)\big).
    		\end{align*}
    		\normalsize
    		Now, substituting $c_k$ and $C$ into this expression establishes the integral representation of $\tau$. 
    		
    		Finally, for $\tau = \mathbb{E}_{\mathcal{A}}=\bigoplus_{k=1}^r \delta_{m_k}$, we have $\tau_k = \delta_{m_k}$ and $\langle \beta_{U,V}, \mathbb{E}_{\mathcal{A}} \rangle=\sum_{k=1}^r\la\beta_{U_k,V_k},\delta_{m_k}\ra=r$. Thus, for any $U,V\in \mathbb{U}(\mathcal{A}')$, evaluating the weight function yields:
    		\begin{align*}
    			\rho_{\mathcal{A}}(E_{\mathcal{A}}, U, V) = r + \sum_{m_k>1} c_k+ C =1.\tag*{\qed}
    		\end{align*}
    		
    		The following corollary provides a structural description.

    		 \begin{cor}
    			Let $\mathcal{A}$ be a commutative $C^*$-subalgebra of $\mathbb{M}_d$. Let $\tau$ be a unital quantum channel on $\mathbb{M}_d$ satisfying $\mathcal{A}\subseteq\text{Fix}(\tau)\cap\text{Range}(\tau)'$. Then, there exist unitaries $\{U_i, V_i\}_{i=1}^n\subseteq\mathbb{U}(\mathcal{A}')$ and real scalars $\{a_i\}_{i=1}^n$ satisfying $\sum_{i=1}^n a_i = 1$ such that 
    			\[
    			\tau =\sum_{i=1}^n a_i \beta_{U_i,V_i}.
    			\]
    		\end{cor}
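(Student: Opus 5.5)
The plan is to mimic the proof of Corollary \ref{description of UCPTP}, with Theorem \ref{EMD:thm2} in place of Theorem \ref{EMD:thm3}.

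\textbf{Reduction.} Since $\mathcal{A}$ is a commutative $C^*$-subalgebra of $\mathbb{M}_d$, there is a unitary $W$ with $W\mathcal{A}W^* = \bigoplus_{k=1}^r \mathbb{C}I_{m_k}$. Put $\tilde\tau=\text{Ad}_W\circ\tau\circ\text{Ad}_{W^*}$. Then $\tilde\tau$ is a unital quantum channel, and
\[\text{Fix}(\tilde\tau)=W\,\text{Fix}(\tau)\,W^*,\qquad \text{Range}(\tilde\tau)'=W\,\text{Range}(\tau)'\,W^*,\]
so $W\mathcal{A}W^*\subseteq \text{Fix}(\tilde\tau)\cap\text{Range}(\tilde\tau)'$. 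Moreover, for unitaries $U,V$,
\[\text{Ad}_{W^*}\circ\beta_{U,V}\circ\text{Ad}_W=\beta_{W^*U,\,W^*V},\]
and $U\in\mathbb{U}((W\mathcal{A}W^*)')$ if and only if $W^*UW\in\mathbb{U}(\mathcal{A}')$. Hence it suffices to treat $\mathcal{A}=\bigoplus_{k=1}^r\mathbb{C}I_{m_k}$.

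\textbf{Splitting the weight.} By Theorem \ref{EMD:thm2},
\[\tau=\int_{U,V\in\mathbb{U}(\mathcal{A}')}\rho(U,V)\,\beta_{U,V}\,\mathrm{d}U\mathrm{d}V,\]
where $\rho=\rho_{\mathcal{A}}(\tau,\cdot,\cdot)$ is continuous and real. Apply both sides to $I_d$. Each $\beta_{U,V}$ is unital and $\tau(I_d)=I_d$, so $\int\rho=1$. Write $\rho=\rho_+-\rho_-$ with $\rho_\pm=\max\{\pm\rho,0\}$, and set $c_\pm=\int\rho_\pm$, so that $c_+-c_-=1$. If $c_\pm>0$, the map $c_\pm^{-1}\int\rho_\pm\,\beta_{U,V}$ is an average of $\beta_{U,V}$ over $U,V\in\mathbb{U}(\mathcal{A}')$ with respect to a probability measure. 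It therefore lies in the closed convex hull of the compact set $\{\beta_{U,V}: U,V\in\mathbb{U}(\mathcal{A}')\}$, which is compact because $\mathbb{U}(\mathcal{A}')$ is compact and $(U,V)\mapsto\beta_{U,V}$ is continuous.

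\textbf{Carathéodory step.} In a finite-dimensional space, the convex hull of a compact set is already closed. By Carathéodory's theorem, each normalized map is then a finite convex combination of $\beta_{U_i,V_i}$ with $U_i,V_i\in\mathbb{U}(\mathcal{A}')$. Substituting into
\[\tau=c_+\bigl(c_+^{-1}\tau_+\bigr)-c_-\bigl(c_-^{-1}\tau_-\bigr)\]
(omitting the second term when $c_-=0$) gives $\tau=\sum_i a_i\beta_{U_i,V_i}$ with real $a_i$. The coefficients satisfy $\sum_i a_i=c_+-c_-=1$.

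The only point needing care is the reduction step: checking that the hypotheses and the class of unitaries in $\mathcal{A}'$ transform correctly under conjugation. Everything after that is a direct transcription of the earlier corollary.
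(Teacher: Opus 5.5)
Your route is the paper's: reduce to $\mathcal{A}=\bigoplus_k\mathbb{C}I_{m_k}$ by a unitary conjugation, then run the positive/negative-part and Carath\'eodory argument of Corollary \ref{description of UCPTP} on the integral formula of Theorem \ref{EMD:thm2}. Everything after the reduction is correct. The reduction itself does not work; it is the step you singled out. The paper's one-line justification (that $\mathrm{MTD}(d)$ is closed under composition with unitary channels) has the same defect.

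Your two displayed facts do not match up. Undoing the conjugation gives $\tau=\sum_i a_i\beta_{W^*U_i,W^*V_i}$. What your second fact places in $\mathbb{U}(\mathcal{A}')$ is $W^*U_iW$, not $W^*U_i=(W^*U_iW)W^*$. The latter lies in $\mathcal{A}'$ only if $W\in\mathcal{A}'$, i.e.\ only if no conjugation was needed. You could rewrite $\text{Ad}_{W^*}\circ\beta_{U,V}\circ\text{Ad}_W$ as $\beta_{W^*UW,W^*VW}$ only if $\text{Ad}_{W^*}\circ\Delta_d\circ\text{Ad}_W=\Delta_d$. That holds only when $W$ is a permutation times a diagonal unitary, which is possible exactly when $\mathcal{A}$ consists of diagonal matrices.

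The gap cannot be closed, because the statement as written is false for non-diagonal $\mathcal{A}$. Take $d=2$, $u=\frac{1}{\sqrt2}(e_1+e_2)$, $P=|u\rangle\langle u|$ and $\mathcal{A}=\text{span}\{P,I_2-P\}$. This is maximal abelian, so $\mathcal{A}'=\mathcal{A}$. Every $V\in\mathbb{U}(\mathcal{A}')$ has the form $\alpha P+\gamma(I_2-P)$ with $|\alpha|=|\gamma|=1$. Hence $V^*PV=P$, and $\beta_{U,V}(P)=U\Delta_2(P)U^*=\frac12 I_2$ for all $U,V\in\mathbb{U}(\mathcal{A}')$. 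So every affine combination of such channels maps $P$ to $\frac12 I_2$, whereas every admissible $\tau$ (for instance $\mathbb{E}_{\mathcal{A}}$) fixes $P$.

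What your argument, and the paper's, actually proves for a general commutative $\mathcal{A}$ is the conclusion with $U_i,V_i\in\mathbb{U}(\mathcal{A}')W^*$. Equivalently, $\tau$ is an affine combination of twisted dephasing channels $\sum_j\text{Ad}_{|x_j\rangle\langle y_j|}$ in which each $x_j$ and $y_j$ lie in the range of the same minimal projection of $\mathcal{A}$. The statement as written does hold when $\mathcal{A}$ is diagonal, in particular in the standard form $\bigoplus_k\mathbb{C}I_{m_k}$.
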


    		\begin{proof}
    			Since $\mathrm{MTD}(d)$ is closed under composition with unitary channels from either side, without loss of generality, we can take $\mathcal{A} = \bigoplus_{k=1}^r \mathbb{C}I_{m_k}$ where $d = \sum_{k=1}^r m_k$.
                Then, the result follows with the help of Theorem \ref{EMD:thm2} and Carath\'{e}odory's Theorem, analogous to the proof of Corollary~\ref{description of UCPTP}.
    		\end{proof}

    		The following theorem is a key tool in characterizing eventually EB unital quantum channels in the next section.
    		
    		\begin{thm}\label{EMD:cor1}
    			Let $\mathcal{A}$ be a commutative $C^*$-subalgebra of $\mathbb{M}_d$. Then, there exists $\varepsilon>0$ such that every unital quantum channel $\tau$, satisfying the properties $\mathcal{A}\subseteq\text{Fix}(\tau)\cap\text{Range}(\tau)'$ and $\|\mathbb{E}_{\mathcal{A}}-\tau\|<\varepsilon$, is mixed twisted dephasing.
    		\end{thm}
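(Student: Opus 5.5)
First I reduce to the block-diagonal normal form. A commutative $C^*$-subalgebra $\mathcal{A}\subseteq\mathbb{M}_d$ is unitarily equivalent to $\bigoplus_{k=1}^r \mathbb{C}I_{m_k}$. (If $\mathcal{A}$ is non-unital, I first adjoin the projection $I_d-P$ onto the complement of its unit $P$; this enlarges the algebra while keeping it commutative.) So there is a unitary $W$ with $W^*\mathcal{A}W=\mathcal{B}:=\bigoplus_{k=1}^r\mathbb{C}I_{m_k}$.

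For a unital channel $\tau$, set $\tilde\tau=\operatorname{Ad}_{W^*}\circ\tau\circ\operatorname{Ad}_W$. Then:
\begin{itemize}
\item $\mathcal{A}\subseteq\mathrm{Fix}(\tau)\cap\mathrm{Range}(\tau)'$ is equivalent to $\mathcal{B}\subseteq\mathrm{Fix}(\tilde\tau)\cap\mathrm{Range}(\tilde\tau)'$;
\item $\mathbb{E}_{\mathcal{B}}=\operatorname{Ad}_{W^*}\circ\mathbb{E}_{\mathcal{A}}\circ\operatorname{Ad}_W$, because both sides are the Hilbert–Schmidt orthogonal projection onto $\mathcal{B}$;
\item $\|\tilde\tau-\mathbb{E}_{\mathcal{B}}\|=\|\tau-\mathbb{E}_{\mathcal{A}}\|$ for the Hilbert–Schmidt superoperator norm, since conjugation by unitary channels is isometric (for any other fixed norm, equivalence of norms in finite dimensions changes only $\varepsilon$);
\item $\mathrm{MTD}(d)$ is closed under composition with unitary channels on both sides, so $\tau$ is MTD if and only if $\tilde\tau$ is.
\end{itemize}
It therefore suffices to treat $\mathcal{A}=\bigoplus_{k=1}^r\mathbb{C}I_{m_k}$. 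If $\mathcal{A}$ is adjoined a unit as above, the hypotheses still hold: $I_d-P$ is fixed because $\tau$ is unital, and it commutes with the range because $P$ does.

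For such $\mathcal{A}$ I invoke Theorem \ref{EMD:thm2}. Every $\tau\in\Sigma_{\mathcal{A}}$ can be written as
\[\tau=\int_{U,V\in\mathbb{U}(\mathcal{A}')}\rho_{\mathcal{A}}(\tau,U,V)\,\beta_{U,V}\,\mathrm{d}U\mathrm{d}V,\]
where $\rho_{\mathcal{A}}$ is continuous and $\rho_{\mathcal{A}}(\mathbb{E}_{\mathcal{A}},\cdot,\cdot)\equiv1$. Define $f(\tau)=\min_{U,V\in\mathbb{U}(\mathcal{A}')}\rho_{\mathcal{A}}(\tau,U,V)$ on $\Sigma_{\mathcal{A}}$. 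This is well defined because $\mathbb{U}(\mathcal{A}')$ is compact.

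Continuity of $f$ is the only point needing care. From the explicit formula,
\[\rho_{\mathcal{A}}(\tau,U,V)=\Bigl\langle\beta_{U,V},\ \tau+\sum_{m_k>1}c_k\,\iota_k(\tau_k)\Bigr\rangle+C.\]
Here $\tau\mapsto\tau_k$ is linear: it is the compression of $\tau$ to the $k$-th diagonal block. Hence $\rho_{\mathcal{A}}$ is affine in $\tau$. The family $\{\beta_{U,V}\}$ is uniformly bounded, so $\rho_{\mathcal{A}}$ is Lipschitz in $\tau$ uniformly in $(U,V)$. Concretely, $|f(\tau)-f(\sigma)|\le K\|\tau-\sigma\|$ for a constant $K$ depending only on $d$ and the block sizes.

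Since $f(\mathbb{E}_{\mathcal{A}})=1$, choose $\varepsilon=1/K$ (or anything smaller). Then $\|\tau-\mathbb{E}_{\mathcal{A}}\|<\varepsilon$ gives $f(\tau)>0$, so $\rho_{\mathcal{A}}(\tau,\cdot,\cdot)>0$ everywhere on $\mathbb{U}(\mathcal{A}')\times\mathbb{U}(\mathcal{A}')$.

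Finally I convert positivity of the weight into membership in $\mathrm{MTD}(d)$. Apply the trace-preserving condition $\operatorname{Tr}\circ\tau=\operatorname{Tr}$ to the integral representation and use that each $\beta_{U,V}$ is trace preserving; this gives $\int\rho_{\mathcal{A}}(\tau,U,V)\,\mathrm{d}U\mathrm{d}V=1$. So $\rho_{\mathcal{A}}(\tau,\cdot,\cdot)$ is a probability density, and $\tau$ lies in the closed convex hull of the compact set $\{\beta_{U,V}:U,V\in\mathbb{U}(\mathcal{A}')\}$. By Carathéodory's theorem, as in Corollary \ref{description of UCPTP}, $\tau$ is a finite convex combination of twisted dephasing channels, hence mixed twisted dephasing. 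The main obstacle is only the bookkeeping of the reduction and the uniform continuity; the substantive work is already contained in Theorem \ref{EMD:thm2}.
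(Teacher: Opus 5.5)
Your proposal is correct and is essentially the paper's proof: reduce by unitary conjugation to $\mathcal{A}=\bigoplus_{k=1}^r\mathbb{C}I_{m_k}$, then argue as in Theorem \ref{EMD:cor2} with the weight $\rho_{\mathcal{A}}$ of Theorem \ref{EMD:thm2}, whose minimum over $\mathbb{U}(\mathcal{A}')\times\mathbb{U}(\mathcal{A}')$ is continuous in $\tau$ and equals $1$ at $\mathbb{E}_{\mathcal{A}}$; your Lipschitz bound and the normalization $\int\rho_{\mathcal{A}}=1$ just make explicit what the paper leaves implicit. The only slip is the non-unital aside: adjoining $I_d-P$ does not carry the hypothesis $\|\mathbb{E}_{\mathcal{A}}-\tau\|<\varepsilon$ over to the enlarged algebra, but that case is vacuous for small $\varepsilon$ because $\mathbb{E}_{\mathcal{A}}(I_d)=P\neq I_d=\tau(I_d)$, and the paper tacitly takes $\mathcal{A}$ unital anyway.
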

    		
    		\begin{proof}
    			Without loss of generality, we can take $\mathcal{A} = \bigoplus_{k=1}^r \mathbb{C}I_{m_k}$ where $d = \sum_{k=1}^r m_k$. The proof follows from Theorem~\ref{EMD:thm2} and is analogous to that of Theorem~\ref{EMD:cor2}.
    		\end{proof}

   The condition that $\tau$ fixes $\mathcal{A}$ in Theorem \ref{EMD:cor1} alone is not sufficient to guarantee the stated conclusion, as demonstrated by the following example.

  \begin{eg}\label{example in a nbd of expectation}
  	Let $d\ge2$ and $\mathcal{A}$ denote the diagonal subalgebra of $\mathbb{M}_d$. Then, $\mathbb{E}_{\mathcal{A}}=\Delta_d$. Consider $\tau=\lambda \operatorname{id}_d+(1-\lambda)\Delta_d$ where $\lambda\in(0,1)$. Indeed, $\tau$ is unital quantum channel that fixes $\mathcal{A}$ for all $\lambda$. But $\tau$ is not $2$-copositive (hence not mixed twisted dephasing) for any $\lambda$ because the following matrix \[(\tau\circ T)^{(2)}\left(\begin{bmatrix}
  		E_{11} & E_{12}\\ E_{21} & E_{22}
  	\end{bmatrix}\right)=\begin{bmatrix}
  		E_{11} & \lambda E_{21}\\ \lambda E_{12} & E_{22}
  	\end{bmatrix},\]
  	has eigenvalue $-\lambda<0$. Note that $\text{Range}(\tau)$ is not contained in $\mathcal{A}'$ for any $\lambda$. The shows that the condition that $\tau$ fixes $\mathcal{A}$ in Theorem \ref{EMD:cor1} alone is not sufficient to guarantee the conclusion. 
  \end{eg}

  A natural question that arises at this point is the following: given a commutative $C^*$-subalgebra $\mathcal{A}$ of $\mathbb{M}_d$, does there exist a neighbourhood of $\mathbb{E}_{\mathcal{A}}$ in which every unital $2$-copositive quantum channel that fixes $\mathcal{A}$ is mixed twisted dephasing? We answer this question in the affirmative below. The following theorem states that given a commutative $C^*$-algebra $\mathcal{A}$ of $\mathbb{M}_d$, the set of all unital quantum channel satisfying $\mathcal{A}\subseteq\text{Fix}(\tau)\cap\text{Range}(\tau)'$ contains the set of all unital $2$-copositive channel that fixes $\mathcal{A}$ pointwise.

 \begin{thm}\label{EMD:prop1}
 	Let $\mathcal{A}$ be a commutative $C^*$-subalgebra of $\mathbb{M}_d$. Then, every unital $2$-copositive channel on $\mathbb{M}_d$ that fixes $\mathcal{A}$ pointwise has range contained in $\mathcal{A}'$.
 \end{thm}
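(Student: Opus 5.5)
The plan is to show that $\tau$ annihilates every off-diagonal block relative to the minimal projections of $\mathcal{A}$, and then read off that the range is block-diagonal. The tool is the Kadison--Schwarz inequality for the unital $2$-positive map $\tau\circ T$.

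\textbf{Reduction.} Since $\mathcal{A}$ is a commutative $C^*$-subalgebra, it is spanned by mutually orthogonal minimal projections $P_1,\dots,P_r$ with $\sum_k P_k=I_d$. Moreover $\mathcal{A}'=\bigoplus_k P_k\mathbb{M}_dP_k$, so it suffices to show that $\tau(P_kXP_l)=0$ for all $X$ and all $k\neq l$. Two facts are needed:
\begin{enumerate}
\item Because $\tau$ fixes each $P_k$, Lemma \ref{EMD:lem5} gives $\tau(P_kXP_l)=P_k\tau(X)P_l$, so $\tau$ leaves each block $P_k\mathbb{M}_dP_l$ invariant.
\item $2$-copositivity does not depend on the orthonormal basis used for the transpose. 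A transpose in another basis equals $\operatorname{Ad}_{\overline{U}}\circ T\circ \operatorname{Ad}_{U^*}$-type compositions of $T$ with unitary conjugations, and composing a $2$-positive map with CP maps keeps it $2$-positive. Hence I may fix a basis in which every $P_k$ is diagonal with $0/1$ entries, so that $P_k^t=P_k$.
\end{enumerate}

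\textbf{Key step.} Set $\varphi=\tau\circ T$. It is $2$-positive, and it is unital because $\tau$ is unital. Choi's Kadison--Schwarz inequality for unital $2$-positive maps then gives
\[
\varphi(Y)^*\varphi(Y)\le\varphi(Y^*Y)\quad\text{for all } Y.
\]
Fix $k\neq l$, an arbitrary $X$, and put $W=P_kXP_l$ and $Y=W^t=P_lX^tP_k$. Then $\varphi(Y)=\tau(W)$. Also $Y^*Y=\overline{W}\,W^t$, whose transpose is $WW^*=P_kXP_lX^*P_k$. Substituting,
\[
\tau(W)^*\tau(W)\le \tau(WW^*).
\]
By Lemma \ref{EMD:lem5}, $\tau(WW^*)\in P_k\mathbb{M}_dP_k$. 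On the other side, $\tau(W)\in P_k\mathbb{M}_dP_l$, so $\tau(W)^*\tau(W)\in P_l\mathbb{M}_dP_l$. Compressing both sides by $P_l$ gives
\[
\tau(W)^*\tau(W)=P_l\,\tau(W)^*\tau(W)\,P_l\le P_l\,\tau(WW^*)\,P_l=0,
\]
since $P_lP_k=0$. Hence $\tau(W)=0$.

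\textbf{Conclusion.} For arbitrary $X$ we can write $\tau(X)=\sum_{k,l}\tau(P_kXP_l)=\sum_k\tau(P_kXP_k)$, and each term lies in $P_k\mathbb{M}_dP_k$. Therefore $\tau(X)\in\mathcal{A}'$. The one delicate point is choosing the basis so that the transpose commutes with the $P_k$, which the basis-independence remark in the reduction handles. The remaining steps are routine bookkeeping with the block decomposition.
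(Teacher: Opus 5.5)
Your proof is correct, but it takes a different route from the paper's. The paper first conjugates $\mathcal{A}$ to $\bigoplus_k\mathbb{C}I_{m_k}$. It then applies $(\tau\circ T)\otimes\operatorname{id}_2$ to rank-one matrices $|\xi\rangle\langle\xi|$ with $\xi=\overline{u}\otimes e_1+\overline{v}\otimes e_2$, where $u\in\mathbb{C}^{m_k}$ and $v\in\mathbb{C}^{m_l}$. The diagonal entry at $v'\otimes e_1$ is $\langle v',\tau(|u\rangle\langle u|)v'\rangle=0$, because $\tau(|u\rangle\langle u|)$ sits in block $k$. Since a positive semidefinite matrix with a zero diagonal entry has zero row and column, this gives $\langle u',\tau(|u\rangle\langle v|)v'\rangle=0$, and the rank-one elements $|u\rangle\langle v|$ span the off-diagonal block.

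You instead apply Choi's Schwarz inequality to the unital $2$-positive map $\tau\circ T$. This yields the twisted inequality $\tau(W)^*\tau(W)\le\tau(WW^*)$, valid for every $W$ and every unital $2$-copositive channel. The swap $W^*W\mapsto WW^*$ produced by the transpose is exactly what puts the two sides in different diagonal blocks. One compression by $P_l$ then annihilates all of $P_k\mathbb{M}_dP_l$ at once, without coordinates or test vectors.

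Both arguments rest on the same $2\times 2$ positivity, since Choi's inequality is itself a Schur-complement argument. The paper's version is fully elementary and exhibits the witness explicitly. Yours is shorter and isolates an inequality that is reusable beyond this theorem.

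One simplification: the basis change you single out as the delicate point is unnecessary. For $Y=W^t$ in any fixed basis, $\varphi(Y)=\tau(W)$ and $T(Y^*Y)=T(Y)\,T(Y)^*=WW^*$. This holds because the transpose is an anti-multiplicative involution with $T(Y^*)=T(Y)^*$, and the formula $Y=P_lX^tP_k$ is never used afterwards. Also, the transpose in the basis $\{Ue_j\}$ is $\operatorname{Ad}_U\circ T\circ\operatorname{Ad}_{U^*}$; the composition $\operatorname{Ad}_{\overline{U}}\circ T\circ\operatorname{Ad}_{U^*}$ you wrote is just $T$ itself.
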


 \begin{proof}
 	Without loss of generality, we can take $ \mathcal{A} = \bigoplus_{k=1}^r \mathbb{C}I_{m_k}$ where $d = \sum_{k=1}^r m_k$. By Lemma \ref{EMD:lem5}, every block subspace $P_k \mathbb{M}_d P_l \cong \mathbb{M}_{m_k,m_l}$ is invariant under $\tau$. We claim that $\tau$ vanishes entirely on the off-diagonal blocks $\mathbb{M}_{m_k,m_l}$ for $k \neq l$. Fix distinct indices $k, l$, and choose non-zero vectors $u, u'\in \mathbb{C}^{m_k}$ and $v,v'\in \mathbb{C}^{m_l}$. Let $\xi=\overline{u}\otimes e_1+\overline{v}\otimes e_2\in\mathbb{C}^d\otimes \mathbb{C}^2$ (where $\{e_1,e_2\}$ is a standard orthonormal basis for $\mathbb{C}^2$). Then, we evaluate $(\tau\circ T)\otimes\text{id}_{2}$ on the rank-$1$ positive matrix $|\xi\ra\la\xi|\in \mathbb{M}_d\otimes \mathbb{M}_2$ and compute its off-diagonal entry:
 	\[
 	\la u'\otimes e_2,((\tau\circ T)\otimes\text{id}_{2})(|\xi\ra\la\xi|)(v'\otimes e_1)\ra=\la u',\tau(|u\ra\la v|)v'\ra.
 	\]
 	Similarly, computing the associated diagonal entry yields
 	\[
 	\langle v'\otimes e_1, ((\tau\circ T)\otimes\text{id}_{2})(|\xi\ra\la\xi|)(v'\otimes e_1) \rangle = \langle v', \tau(|u\rangle\langle u|) v' \rangle = \text{Tr}\big(\tau(|u\rangle\langle u|)|v'\rangle\langle v'|\big).
 	\]
 	Since $|u\rangle\langle u| \in \mathbb{M}_{m_k}$, its image under $\tau$ belongs to $\mathbb{M}_{m_k}$ due to the invariance. Whereas $|v'\rangle\langle v'| \in \mathbb{M}_{m_l}$ lies in the orthogonal complement of $\mathbb{M}_{m_k}$. Therefore, the trace above evaluates to zero.

 	Because $\tau$ is a $2$-copositive channel, the map $\tau \circ T$ is $2$-positive. This implies that $((\tau\circ T)\otimes\text{id}_{2})(|\xi\ra\la\xi|) \ge 0$. A positive semi-definite matrix with a zero on its diagonal must have identically zero entries across that entire row and column. Thus, $\la u'\otimes e_2,((\tau\circ T)\otimes\text{id}_{2})(|\xi\ra\la\xi|)(v'\otimes e_1)\ra = 0$, proving that $\tau(|u\rangle\langle v|) = 0$. Therefore, $\tau$ vanishes completely on the off-diagonal block $\mathbb{M}_{m_k,m_l}$. Hence, $\tau$ acts non-trivially only on the diagonal blocks, showing the range of $\tau$ is contained inside the block diagonal matrices $\mathcal{A}'=\bigoplus_{k=1}^r \mathbb{M}_{m_k}$.
 \end{proof}

 \begin{cor}
 	Let $\mathcal{A}$ be a commutative $C^*$-subalgebra of $\mathbb{M}_d$. Then, there exists $\varepsilon>0$ such that every unital $2$-copositive quantum channel $\tau$ that fixes $\mathcal{A}$ satisfying $\|\mathbb{E}_{\mathcal{A}}-\tau\|<\varepsilon$ is mixed twisted dephasing.
 \end{cor}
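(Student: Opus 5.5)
This corollary follows by combining Theorem \ref{EMD:prop1} with Theorem \ref{EMD:cor1}. I would take the $\varepsilon>0$ supplied by Theorem \ref{EMD:cor1} for the given commutative $C^*$-subalgebra $\mathcal{A}$. Then I would show that every unital $2$-copositive channel $\tau$ that fixes $\mathcal{A}$ pointwise and satisfies $\|\mathbb{E}_{\mathcal{A}}-\tau\|<\varepsilon$ meets the hypotheses of that theorem. Concretely, the hypotheses are $\mathcal{A}\subseteq\text{Fix}(\tau)\cap\text{Range}(\tau)'$ together with the norm bound.

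The first inclusion $\mathcal{A}\subseteq\text{Fix}(\tau)$ holds by assumption, since $\tau$ fixes $\mathcal{A}$ pointwise. For the second, Theorem \ref{EMD:prop1} applies verbatim: $\tau$ is unital, $2$-copositive, and fixes the commutative algebra $\mathcal{A}$, so $\text{Range}(\tau)\subseteq\mathcal{A}'$.

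It remains to convert this into $\mathcal{A}\subseteq\text{Range}(\tau)'$. Every element of $\text{Range}(\tau)$ lies in $\mathcal{A}'$ and so commutes with every element of $\mathcal{A}$. Hence $\mathcal{A}\subseteq\mathcal{A}''\subseteq\text{Range}(\tau)'$, which is the standard reversal of commutants.

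With both inclusions and the norm bound in hand, Theorem \ref{EMD:cor1} gives that $\tau$ is mixed twisted dephasing. The $\varepsilon$ depends only on $\mathcal{A}$ (through the block sizes after a unitary change of basis), as in that theorem.

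I do not expect a real obstacle here. The only point to check is that the unitary reduction to $\mathcal{A}=\bigoplus_k\mathbb{C}I_{m_k}$, used inside both earlier theorems, is compatible with the hypotheses. It is: conjugating by a unitary $W$ preserves unitality, $2$-copositivity (up to replacing the transpose by a conjugated one, which still gives a $2$-positive composition), pointwise fixing of $\mathrm{Ad}_W(\mathcal{A})$, and the norm distance to the conditional expectation. Since both cited theorems are already stated for an arbitrary commutative $\mathcal{A}$, this bookkeeping is already absorbed into them.
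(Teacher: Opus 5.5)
Your proposal is correct and follows essentially the same route as the paper, which simply cites Theorem \ref{EMD:prop1} and Theorem \ref{EMD:cor1}. Your explicit step $\mathcal{A}\subseteq\mathcal{A}''\subseteq\text{Range}(\tau)'$ is exactly what is needed to turn $\text{Range}(\tau)\subseteq\mathcal{A}'$ into the hypothesis $\mathcal{A}\subseteq\text{Fix}(\tau)\cap\text{Range}(\tau)'$ of Theorem \ref{EMD:cor1}.
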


 \begin{proof}
 	The proof follows by Theorem \ref{EMD:prop1} and Theorem \ref{EMD:cor1}.
 \end{proof}

  \section{Eventually EB unital channels in discrete time}\label{discrete time}

In this section, we characterize eventually EB unital channels in terms of some commutation relations among their eigenvectors. Consequently, we establish that for a unital quantum channel, the notions of being eventually EB, eventually PPT, eventually $2$-copositive, and eventually MTD are equivalent.  We begin with the following definition.

\begin{defn}
	Let $\tau:\mathbb{M}_d\rightarrow\mathbb{M}_d$ be a linear map. Then
	\begin{enumerate}
		\item The \textit{multiplicative domain} of $\tau$, denoted by $\mathscr{M}_\tau$, is defined as
		\begin{align*}
			\mathscr{M}_\tau:=\{X\in\mathbb{M}_d: \tau(XY)=\tau(X)\tau(Y), \tau(YX)=\tau(Y)\tau(X) \text{ for all }Y\in\mathbb{M}_d\}.
		\end{align*}
		\item The \textit{peripheral spectrum} of $\tau$ is the set of all eigenvalues of $\tau$ having modulus $1$, that is, $\{\lambda\in\text{Spec}(\tau):\vert\lambda\vert=1\}$. Such eigenvalues are called \textit{peripheral eigenvalues} of $\tau$, and the corresponding eigenvectors are called \textit{peripheral eigenvectors} of $\tau$.
		\item The \textit{peripheral space} of $\tau$, denoted by $\mathcal{P}(\tau)$, is defined to be the subspace spanned by the \textit{peripheral eigenvectors}:
		\begin{align*}
			\mathcal{P}(\tau):=\text{span}\{X \in \mathbb{M}_d : \tau(X) = \lambda X \text{ for some } |\lambda| = 1\}.
		\end{align*}
	\end{enumerate}
\end{defn}

Let $\tau$ be a unital quantum channel on $\mathbb{M}_d$. Note that the peripheral space $\mathcal{P}(\tau)$ is unital and self-adjoint preserving. By the Kadison-Schwarz inequality, it follows that the peripheral space is contained in the multiplicative domain, that is, $\mathcal{P}(\tau)\subseteq \mathscr{M}_\tau$. Bhat et al. \cite{BKT, BKT2} proved that $\mathcal{P}(\tau)$ is a $C^*$-subalgebra of $\mathbb{M}_d$. Moreover, the restriction of $\tau$ to $\mathcal{P}(\tau)$ is a $*$-automorphism. Furthermore, for a unital CP map $\tau$, they showed that $\mathcal{P}(\tau)=\mathcal{P}(\tau^n)$ for every $n\in\mathbb{N}$. We now discuss a few additional results that will be useful in characterizing eventually EB unital quantum channels.

 \begin{prop}[\cite{BD}]\label{EMD:thm1}
	For any unital quantum channel $\tau$ on $\mathbb{M}_d$, there exists $U\in \mathbb{U}(d)$ such that $\text{Ad}_U$ leaves $\mathcal{P}(\tau)^{\perp}$ invariant and $\tau(X)=UXU^*$ for all $X\in\mathcal{P}(\tau)$. Consequently, for each $n\in\mathbb{N}$, the channel $\text{Ad}_{{U^*}^n}\circ\tau^n$ fixes the peripheral space $\mathcal{P}(\tau)$ pointwise and $\lim\limits_{n\to\infty}\text{Ad}_{{U^*}^n}\circ\tau^n=\mathbb{E}_{\mathcal{P}(\tau)}$.
\end{prop}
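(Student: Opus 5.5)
The plan is to use three facts recalled above: $\mathcal{P}(\tau)$ is a $C^*$-subalgebra of $\mathbb{M}_d$, $\tau|_{\mathcal{P}(\tau)}$ is a $*$-automorphism of it, and $\tau$ is a contraction for the Hilbert--Schmidt norm. I first implement the automorphism by a unitary of $\mathbb{M}_d$. I then show that $\mathcal{P}(\tau)^\perp$ is $\tau$-invariant with spectral radius $<1$ there, which gives the convergence.

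\emph{Step 1 (implementing the automorphism).} Write $\mathcal{P}=\mathcal{P}(\tau)$. Up to unitary conjugation, $\mathcal{P}\cong\bigoplus_{k}\mathbb{M}_{n_k}\otimes I_{\mu_k}$, acting on $\bigoplus_k\mathbb{C}^{n_k}\otimes\mathbb{C}^{\mu_k}$. Any $*$-automorphism $\alpha$ of $\mathcal{P}$ permutes the simple summands among those with the same $n_k$, and on each summand it is inner. So $\alpha$ is implemented by a unitary of $\mathbb{M}_d$ as soon as $\mu_{\pi(k)}=\mu_k$, where $\pi$ is the induced permutation. This multiplicity condition is where trace preservation enters. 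If $p$ is a minimal projection in the $k$-th summand, then $\operatorname{Tr}(p)=\mu_k$. Since $\alpha(p)=\tau(p)$ is a minimal projection in the $\pi(k)$-th summand, $\mu_{\pi(k)}=\operatorname{Tr}(\tau(p))=\operatorname{Tr}(p)=\mu_k$. Hence there is $U\in\mathbb{U}(d)$ with $\tau(X)=UXU^*$ for all $X\in\mathcal{P}$, and in particular $\operatorname{Ad}_U(\mathcal{P})=\mathcal{P}$. Since $\operatorname{Ad}_U$ is unitary for the Hilbert--Schmidt inner product, it also leaves $\mathcal{P}^\perp$ invariant.

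\emph{Step 2 (invariance of $\mathcal{P}^\perp$ and the spectral gap).} A unital channel is a Hilbert--Schmidt contraction, since $\tau^*$ is also a unital channel and $\|\tau\|_{2\to2}^2\le\|\tau^*\tau\|\le 1$ by interpolation. Take $\tau(X)=\lambda X$ with $|\lambda|=1$. Then $\|\tau X\|_2=\|X\|_2$, so $X$ lies in the isometric part of the contraction, and therefore $\tau^*\tau X=X$, i.e. $\tau^*X=\bar\lambda X$. Thus $\mathcal{P}$ is $\tau^*$-invariant, and so $\mathcal{P}^\perp$ is $\tau$-invariant. Because $\tau$ is a contraction, its peripheral eigenvalues have no nontrivial Jordan blocks. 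Hence every generalized eigenvector for a peripheral eigenvalue is a genuine eigenvector and lies in $\mathcal{P}$. Consequently $r:=\operatorname{spr}(\tau|_{\mathcal{P}^\perp})<1$.

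\emph{Step 3 (conclusion).} Set $\sigma_n=\operatorname{Ad}_{{U^*}^n}\circ\tau^n$. On $\mathcal{P}$, $\tau^n(X)=U^nX{U^*}^n$, so $\sigma_n(X)=X$, and $\sigma_n$ fixes $\mathcal{P}$ pointwise. On $\mathcal{P}^\perp$, both $\tau^n$ and $\operatorname{Ad}_{{U^*}^n}$ preserve the subspace. The first has norm $O(n^{d^2}r^n)\to0$ and the second is a Hilbert--Schmidt unitary, so $\sigma_n|_{\mathcal{P}^\perp}\to0$. Hence $\sigma_n$ converges to the orthogonal projection onto $\mathcal{P}$. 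By the discussion in Section~\ref{conditional expectation}, this projection is the trace-preserving conditional expectation $\mathbb{E}_{\mathcal{P}(\tau)}$.

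The main obstacle is Step 1. The automorphism $\tau|_{\mathcal{P}}$ might permute simple summands whose multiplicities in $\mathbb{C}^d$ differ, and it is trace preservation that rules this out. Some care is needed to state and apply the standard structure theorem for finite-dimensional $C^*$-algebras and their representations, so that the unitary obtained acts on all of $\mathbb{C}^d$.
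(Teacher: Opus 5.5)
Your proof is correct. The paper gives no argument of its own for this proposition; it quotes it from \cite{BD}. So there is no in-paper proof to compare against. Your proof derives the statement from the two facts the paper recalls just before it ($\mathcal{P}(\tau)$ is a $C^*$-subalgebra, and $\tau|_{\mathcal{P}(\tau)}$ is a $*$-automorphism), together with elementary Hilbert--Schmidt arguments.

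The two points that carry the weight are the ones you identify.
\begin{itemize}
\item Trace preservation forces $\mu_{\pi(k)}=\mu_k$. This is what lets the automorphism be implemented by a single unitary on all of $\mathbb{C}^d$.
\item The isometry argument, $\|\tau X\|_2=\|X\|_2\Rightarrow\tau^*\tau X=X\Rightarrow\tau^*X=\bar\lambda X$, makes $\mathcal{P}(\tau)^\perp$ invariant under $\tau$. Without this you would only get convergence to the spectral projection onto $\mathcal{P}(\tau)$ along the non-peripheral generalized eigenspaces. That projection is oblique in general. The invariance is what identifies the limit with the orthogonal projection $\mathbb{E}_{\mathcal{P}(\tau)}$.
\end{itemize}

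Two small simplifications are available.
\begin{itemize}
\item Hilbert--Schmidt contractivity follows in one line from the Kadison--Schwarz inequality and trace preservation: $\|\tau(X)\|_2^2=\operatorname{Tr}(\tau(X)^*\tau(X))\le\operatorname{Tr}(\tau(X^*X))=\|X\|_2^2$. No interpolation and no detour through $\tau^*\tau$ is needed.
\item The Jordan-block remark is not needed to get $r<1$. An eigenvector of $\tau|_{\mathcal{P}(\tau)^\perp}$ with unimodular eigenvalue would lie in $\mathcal{P}(\tau)\cap\mathcal{P}(\tau)^\perp=\{0\}$.
\end{itemize}
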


\begin{prop}\label{EMD:lem9}
	For any unital quantum channel $\tau$ on $\mathbb{M}_d$, the commutant of the center of its peripheral space, $Z(\mathcal{P}(\tau))'$, is a reducing subspace for $\tau$.
\end{prop}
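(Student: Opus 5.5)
The strategy is to reduce the statement to Lemma \ref{EMD:lem5}, which says that if a $C^*$-subalgebra $\mathcal{A}$ lies in $\mathrm{Fix}(\tau)$, then $Z(\mathcal{A})'$ is a reducing subspace for $\tau$. The peripheral space $\mathcal{P}(\tau)$ need not be fixed by $\tau$; it is only mapped onto itself by a $*$-automorphism. So the step that needs work is showing that $\tau$ fixes the center $Z(\mathcal{P}(\tau))$ pointwise, or at least that each block subspace $P_k\mathbb{M}_dP_l$ is carried by $\tau$ into another block subspace of the same kind.

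\textbf{Step 1 (action on the center).} By \cite{BKT, BKT2}, $\mathcal{P}(\tau)$ is a $C^*$-subalgebra and $\tau|_{\mathcal{P}(\tau)}$ is a $*$-automorphism. Any $*$-automorphism maps the center $Z(\mathcal{P}(\tau))$ onto itself and permutes its minimal central projections $P_1,\dots,P_r$. By Proposition \ref{EMD:thm1} this automorphism equals $\mathrm{Ad}_U$ on $\mathcal{P}(\tau)$, so $\tau(P_k)=UP_kU^*=P_{\pi(k)}$ for some permutation $\pi$. The permutation has finite order $N$, so $\tau^N$ fixes every $P_k$.

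\textbf{Step 2 (multiplicative domain).} Since $\mathcal{P}(\tau)\subseteq\mathscr{M}_\tau$, every $P_k$ lies in the multiplicative domain. Hence, for every $X\in\mathbb{M}_d$,
\[
\tau(P_kXP_l)=\tau(P_k)\tau(X)\tau(P_l)=P_{\pi(k)}\tau(X)P_{\pi(l)}.
\]
This is the substitute for the fixed-point argument in Lemma \ref{EMD:lem5}. Taking $X$ with $X=P_kXP_k$, it shows that $\tau$ maps $P_k\mathbb{M}_dP_k$ into $P_{\pi(k)}\mathbb{M}_dP_{\pi(k)}$. Since $\pi$ is a bijection, $Z(\mathcal{P}(\tau))'=\bigoplus_k P_k\mathbb{M}_dP_k$ is invariant under $\tau$. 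Similarly, the off-diagonal blocks $P_k\mathbb{M}_dP_l$ with $k\neq l$ are sent to off-diagonal blocks $P_{\pi(k)}\mathbb{M}_dP_{\pi(l)}$, and $\pi(k)\neq\pi(l)$. So the orthogonal complement $\bigoplus_{k\neq l}P_k\mathbb{M}_dP_l$ is also invariant, and the two invariances together give reducibility.

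\textbf{Step 3 (sanity check).} The two subspaces are orthogonal complements in the Hilbert–Schmidt inner product, because the $P_k$ are orthogonal projections summing to $I_d$. Reducing therefore means exactly that both are $\tau$-invariant, which is what Step 2 gives. If one prefers to argue through $\tau^*$ instead, the invariance of the complement under $\tau$ is equivalent to the invariance of $Z(\mathcal{P}(\tau))'$ under $\tau^*$. This can be checked via $\tau^*$ being unital CP with the $P_k$ in its multiplicative domain as well, since $\tau$ is unital and trace-preserving. The direct argument in Step 2 makes this unnecessary.

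\textbf{Main obstacle.} The hardest point is justifying $\tau(P_k)=P_{\pi(k)}$ and the multiplicativity identity used in Step 2. Both depend only on the facts $\mathcal{P}(\tau)\subseteq\mathscr{M}_\tau$ and that $\tau|_{\mathcal{P}(\tau)}$ is a $*$-automorphism. I would state these carefully and cite \cite{BKT, BKT2}, avoiding any claim that $\tau$ fixes $\mathcal{P}(\tau)$.
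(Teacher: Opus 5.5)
Your proof is correct and follows essentially the same route as the paper. The automorphism $\tau|_{\mathcal{P}(\tau)}$ permutes the minimal central projections $P_k$, and since these lie in $\mathscr{M}_\tau$ you get $\tau(P_kXP_l)=P_{\pi(k)}\tau(X)P_{\pi(l)}$; hence diagonal blocks go to diagonal blocks and, by injectivity of $\pi$, off-diagonal blocks go to off-diagonal blocks. The detours through $\mathrm{Ad}_U$ from Proposition \ref{EMD:thm1}, the power $\tau^N$, and the dual map $\tau^*$ are harmless but not needed.
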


\begin{proof}
	Let $\{P_k\}_{k=1}^r$ denote the minimal central projections of $\mathcal{P}(\tau)$. We know that the restriction $\tau|_{\mathcal{P}(\tau)}$ acts as a $*$-automorphism. Therefore, there exists a permutation $\pi \in S_r$ such that $\tau(P_k) = P_{\pi(k)}$ for all $k$. Since each $P_k$ belongs to the multiplicative domain of $\tau$, $\tau(P_k X P_l) = P_{\pi(k)} \tau(X) P_{\pi(l)}$ for all $X \in \mathbb{M}_d$. The commutant naturally decomposes into the block-diagonal structure $Z(\mathcal{P}(\tau))' = \bigoplus_{k=1}^r P_k \mathbb{M}_d P_k$. The multiplicative relation ensures that $\tau$ maps each diagonal block $P_k \mathbb{M}_d P_k$ into $P_{\pi(k)} \mathbb{M}_d P_{\pi(k)}$, confirming that $Z(\mathcal{P}(\tau))'$ is invariant under $\tau$. 
	Further, for $k \neq l$, $\tau$ maps the off-diagonal blocks $P_k \mathbb{M}_d P_l$ into $P_{\pi(k)} \mathbb{M}_d P_{\pi(l)}$, yielding $(Z(\mathcal{P}(\tau))')^\perp = \bigoplus_{k \neq l} P_k \mathbb{M}_d P_l$ is preserved under $\tau$. Hence, $Z(\mathcal{P}(\tau))'$, is a reducing subspace for $\tau$.
\end{proof}

\begin{prop}\label{EMD:prop3}
	Let $\tau$ be a unital $2$-copositive channel on $\mathbb{M}_d$. Then, the peripheral space $\mathcal{P}(\tau)$ is a commutative $C^*$-algebra.
\end{prop}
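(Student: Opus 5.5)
The plan is to show directly that any two elements of $\mathcal{P}(\tau)$ commute. We already know that $\mathcal{P}(\tau)$ is a $C^*$-subalgebra of $\mathbb{M}_d$ contained in the multiplicative domain $\mathscr{M}_\tau$, and that $\tau|_{\mathcal{P}(\tau)}$ is a $*$-automorphism. So the work is to rule out a noncommutative block, and 2-copositivity will be used to forbid the transpose from acting like a $*$-homomorphism on a copy of $\mathbb{M}_2$.

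\textbf{Step 1 (reduce to a power fixing $\mathcal{P}(\tau)$).} By Proposition \ref{EMD:thm1}, choose $U$ with $\tau(X)=UXU^*$ on $\mathcal{P}(\tau)$. Since $\mathcal{P}(\tau)$ is finite dimensional and $\tau|_{\mathcal{P}(\tau)}$ is an automorphism whose eigenvalues lie on the unit circle, the spectrum of $\tau|_{\mathcal{P}(\tau)}$ is finite. Hence some subsequence of $\tau^n|_{\mathcal{P}(\tau)}$ converges to the identity. Set $\sigma_n=\operatorname{Ad}_{U^{*n}}\circ\tau^n$. Each $\sigma_n$ is a unital channel fixing $\mathcal{P}(\tau)$ pointwise, and $\sigma_n$ is still 2-copositive. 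Indeed, $\sigma_n\circ T=\operatorname{Ad}_{U^{*n}}\circ\tau^{n-1}\circ(\tau\circ T)$ is a composition of CP maps with a 2-positive map, hence 2-positive.

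Alternatively, and more simply, one can avoid limits. Pick $n$ with $\tau^n|_{\mathcal{P}(\tau)}=\operatorname{Ad}_W|_{\mathcal{P}(\tau)}$ for a unitary $W$, and use $\sigma=\operatorname{Ad}_{W^*}\circ\tau^n$, which fixes $\mathcal{P}(\tau)$ pointwise. In fact $\operatorname{Ad}_{U^{*}}\circ\tau$ already fixes $\mathcal{P}(\tau)$ by Proposition \ref{EMD:thm1}, so $\sigma=\operatorname{Ad}_{U^*}\circ\tau$ works directly.

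\textbf{Step 2 (multiplicative domain gives a bimodule property).} Since $\sigma$ fixes $\mathcal{P}(\tau)$ pointwise and $\sigma$ is a unital channel, Kadison--Schwarz shows that $\mathcal{P}(\tau)\subseteq\mathscr{M}_\sigma$. Hence $\sigma(AXB)=A\sigma(X)B$ for all $A,B\in\mathcal{P}(\tau)$ and all $X\in\mathbb{M}_d$.

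\textbf{Step 3 (main obstacle: contradiction from a noncommutative summand).} Suppose $\mathcal{P}(\tau)$ is not commutative. Then it contains a system of matrix units $E_{11},E_{12},E_{21},E_{22}$, i.e.\ a unital copy of $\mathbb{M}_2$ inside some corner. I will apply $(\sigma\circ T)\otimes\operatorname{id}_2$ to a positive matrix built from transposes of these units. Take the positive block matrix $Z=\begin{bmatrix} E_{11}^t & E_{12}^t\\ E_{21}^t & E_{22}^t\end{bmatrix}$; it is positive because $[E_{ij}^t]$ is a unitary conjugate of the Choi-type matrix $[E_{ji}]^t$. Applying $(\sigma\circ T)\otimes\operatorname{id}_2$ gives $[\sigma(E_{ij})]=[E_{ij}]$, which is positive. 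So I must instead use the other ordering, $Z=[E_{ji}^t]$, the transpose of the positive matrix $[E_{ij}]$, which is positive. Its image is $[\sigma(E_{ji})]=[E_{ji}]$, and $\begin{bmatrix}E_{11}&E_{21}\\E_{12}&E_{22}\end{bmatrix}$ has eigenvalue $-1$. This is exactly the computation in Example \ref{example in a nbd of expectation}, and it contradicts 2-copositivity.

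The delicate points are these. First, one must check that the transposed matrix units are legitimately placed. We only need $Z\ge0$, and the transpose of a positive block matrix (a full transpose of $\mathbb{M}_{2d}$) is positive, so this holds. Second, one must check that $\sigma$ fixes the $E_{ij}$, which it does since they lie in $\mathcal{P}(\tau)$. Thus $\mathcal{P}(\tau)$ has no $\mathbb{M}_k$ summand with $k\ge2$, so it is commutative.
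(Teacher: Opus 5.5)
Your proof is correct and follows essentially the same route as the paper. Both arguments use Proposition \ref{EMD:thm1} to see that $\tau$ acts as $\operatorname{Ad}_U$ on a $2\times 2$ system of matrix units $\{E_{ij}\}$ in a non-commutative summand of $\mathcal{P}(\tau)$, and then show that $(\tau\circ T)\otimes\operatorname{id}_2$ sends a positive block matrix to a unitary conjugate of the swap-type matrix $[E_{ji}]$, which has eigenvalue $-1$. Replacing $\tau$ by $\operatorname{Ad}_{U^*}\circ\tau$ is only cosmetic, and Step 2 and the limit argument in Step 1 are not needed.

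Your choice $Z=[E_{ji}^t]$, the full transpose of $[E_{ij}]\ge 0$, is slightly more careful than the paper, which tacitly uses $X_{ij}^t=X_{ji}$. You should delete the abandoned first attempt, because its claim that $[E_{ij}^t]=[E_{ji}]^t$ is positive is false.
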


\begin{proof}
	Since the peripheral space $\mathcal{P}(\tau)$ is a $C^*$-subalgebra of $\mathbb{M}_d, \;\mathcal{P}(\tau)$ is unitarily equivalent to $\bigoplus_{k=1}^r \mathbb{M}_{n_k} \otimes I_{m_k}$, where $\sum_{k=1}^r n_k m_k = d$. We must show that $n_k = 1$ for all $k$. If possible let, $n_k \geq 2$ for some $k$. Let $\{u_1,u_2\}$ be an orthonormal set for $\mathbb{C}^{n_k}$, and define the matrices $X_{ij} = |u_i\rangle\langle u_j| \otimes I_{m_k} \in \mathcal{P}(\tau)$ for $i,j\in\{1,2\}$. By Proposition \ref{EMD:thm1}, there exists a unitary $U \in \mathbb{M}_d$ such that $\tau(X_{ij}) = U X_{ij} U^*$ for all $i, j$.
	
	By hypothesis, $\tau$ is $2$-copositive, that is, the map $(\tau \circ T) \otimes \text{id}_{2}$ is positive. Applying this map to the positive semi-definite operator $\Sigma = \sum_{i,j=1}^2 X_{ij} \otimes |e_i\ra\la e_j|$ (where $\{e_1,e_2\}$ is a standard orthonormal basis of $\mathbb{C}^2$) yields
	\[
	((\tau \circ T) \otimes \text{id}_{2})(\Sigma) = \sum_{i,j} \tau(X_{ji}) \otimes |e_i\ra\la e_j| = (U \otimes I_d) \left( \sum_{i,j} X_{ji} \otimes |e_i\ra\la e_j| \right) (U \otimes I_d)^*.
	\]
	Therefore, the operator $\sum_{i,j=1}^2 X_{ji} \otimes |e_i\ra\la e_j|$ must be positive semi-definite. However, it essentially acts as the swap operator and admits $-1$ as an eigenvalue. Indeed, taking $v \in \mathbb{C}^{m_k}$ and distinct indices $p \neq q$, the vector $ (u_p \otimes v) \otimes e_q - (u_q \otimes v) \otimes e_p $ clearly serves as an eigenvector corresponding to $-1$. This contradicts the positivity of $\sum_{i,j=1}^2 X_{ji} \otimes |e_i\ra\la e_j|$. Thus, we must have $n_k = 1$ for all $k$. It follows that $\mathcal{P}(\tau)$ is unitarily equivalent to $\bigoplus_{k=1}^r \mathbb{C} I_{m_k},$ confirming it is commutative.
\end{proof}

\begin{eg}\label{commutative peripheral space but not EB}
	 Consider $\tau=\lambda\operatorname{id}_d+(1-\lambda)\Delta_d$ on $\mathbb{M}_d$ where $\lambda\in(0,1)$. The only possible peripheral eigenvalue of $\tau$ is $1$ and the peripheral space is actually the diagonal algebra, which is commutative. But $\tau$ is not $2$-copositive for any $\lambda$ as discussed in Example \ref{example in a nbd of expectation}. This shows that the converse of Proposition \ref{EMD:prop3} is not necessarily true for $d\ge 2$. In fact, $\tau$ is not eventually $2$-copositive also because $\tau^n=\lambda^n \operatorname{id}_d+(1-\lambda^n)\Delta_d$ is not $2$-copositive for any $n$. This demonstrates that the commutativity of the peripheral space alone is not sufficient to conclude eventually $2$-copositive (and hence eventually EB).
\end{eg}

\begin{rmk}
	Rahaman et al. \cite{RJP} established that the stabilized multiplicative domain of any unital PPT channel is commutative. Since, for unital quantum channels, the stabilized multiplicative domain coincides with the peripheral space (see \cite{BKT, MR}), their result equivalently shows that the peripheral space of any unital PPT channel is commutative. Proposition \ref{EMD:prop3} strengthens this observation: we show that mere \(2\)-copositivity of the map is sufficient to ensure this commutativity. 
	We discuss the following example to show that the set of PPT channels is properly contained in the set of $2$-copositive channels.
\end{rmk}

\begin{eg}\label{EMD:eg3}
	Let $d>2$ and consider the channel $\tau(X) = \lambda X^T + (1-\lambda)\delta_d(X)$. For any parameter $\lambda \in \big[ -\frac{1}{2d-1}, -\frac{1}{d^2-1} \big)$, this map is $2$-copositive but not PPT. For details, we refer the reader to Example \ref{example in appendix}.
\end{eg}

\begin{prop}\label{EMD:prop4}
	The range of every unital $2$-copositive channel is contained in the commutant of its peripheral space.
\end{prop}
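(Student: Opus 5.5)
The idea is to reduce to Theorem \ref{EMD:prop1}, which handles exactly this conclusion for a commutative algebra fixed pointwise. The only gap is that $\tau$ does not fix its peripheral space pointwise; it acts on it as an automorphism. We remove that automorphism by composing with a unitary conjugation, and then transport the conclusion back.

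Let $\tau$ be a unital $2$-copositive channel on $\mathbb{M}_d$.

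First, by Proposition \ref{EMD:prop3}, $\mathcal{P}(\tau)$ is a commutative $C^*$-subalgebra of $\mathbb{M}_d$.

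Next, by Proposition \ref{EMD:thm1}, there is a unitary $U\in\mathbb{U}(d)$ with $\tau(X)=UXU^*$ for all $X\in\mathcal{P}(\tau)$. Put $\tau'=\operatorname{Ad}_{U^*}\circ\tau$. Then:
\begin{enumerate}
\item $\tau'$ is a unital quantum channel.
\item $\tau'$ fixes $\mathcal{P}(\tau)$ pointwise.
\item $\tau'$ is still $2$-copositive. Indeed, $(\tau'\circ T)\otimes\operatorname{id}_2=(\operatorname{Ad}_{U^*}\otimes\operatorname{id}_2)\circ((\tau\circ T)\otimes\operatorname{id}_2)$, and the first factor is positive while the second is positive by hypothesis.
\end{enumerate}

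Theorem \ref{EMD:prop1}, applied with $\mathcal{A}=\mathcal{P}(\tau)$, now gives $\operatorname{Range}(\tau')\subseteq\mathcal{P}(\tau)'$.

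It remains to undo the twist. Since $\tau=\operatorname{Ad}_U\circ\tau'$, we get
\[
\operatorname{Range}(\tau)=U\operatorname{Range}(\tau')U^*\subseteq U\mathcal{P}(\tau)'U^*=\big(U\mathcal{P}(\tau)U^*\big)'.
\]
Moreover, $U\mathcal{P}(\tau)U^*=\tau(\mathcal{P}(\tau))=\mathcal{P}(\tau)$, because $\tau$ restricted to $\mathcal{P}(\tau)$ is a $*$-automorphism, hence bijective onto $\mathcal{P}(\tau)$. Therefore $\operatorname{Range}(\tau)\subseteq\mathcal{P}(\tau)'$.

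The main point needing care is that $2$-copositivity survives composition with $\operatorname{Ad}_{U^*}$ on the left, so that Theorem \ref{EMD:prop1} really applies to $\tau'$. As checked above this is immediate, and no other step carries real difficulty.
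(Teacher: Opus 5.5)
Your argument is correct and follows the paper's route. Both proofs get commutativity from Proposition~\ref{EMD:prop3}, untwist by the unitary of Proposition~\ref{EMD:thm1}, apply Theorem~\ref{EMD:prop1} to $\operatorname{Ad}_{U^*}\circ\tau$, and then transfer the conclusion back to $\tau$. The only difference is the transfer step: the paper uses the reducing-subspace results (Lemma~\ref{EMD:lem5} and Proposition~\ref{EMD:lem9}) to show that $\tau$ annihilates $(\mathcal{P}(\tau)')^{\perp}$, whereas your observation that $U\mathcal{P}(\tau)U^*=\mathcal{P}(\tau)$, hence $U\mathcal{P}(\tau)'U^*=\mathcal{P}(\tau)'$, is shorter. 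You also use the correct twist $\operatorname{Ad}_{U^*}$ (the paper writes $\operatorname{Ad}_U$) and explicitly check that it preserves $2$-copositivity, which the paper leaves implicit.
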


\begin{proof}
	Let $\tau$ be a unital $2$-copositive channel on $\mathbb{M}_d$. By Proposition \ref{EMD:prop3}, $\mathcal{P}(\tau)$ is commutative, in particular,  $\mathcal{P}(\tau) = Z(\mathcal{P}(\tau))$. By Theorem \ref{EMD:thm1}, there exists $U \in \mathbb{U}(d)$ such that ${\mathcal{P}(\tau)}\subseteq\text{Fix}(\text{Ad}_U \circ \tau)$. Applying Theorem \ref{EMD:prop1} to the $2$-copositive channel $\text{Ad}_U \circ \tau$ yields $\text{Range}(\text{Ad}_U \circ \tau) \subseteq \mathcal{P}(\tau)'$. Furthermore, Lemmas \ref{EMD:lem5}, guarantees that $\mathcal{P}(\tau)'=Z(\mathcal{P}(\tau))'$ reduces $\text{Ad}_U \circ \tau$. Consequently, $\text{Ad}_U \circ \tau$ vanishes identically on the orthogonal complement $(\mathcal{P}(\tau)')^{\perp}$, forcing $\tau|_{(\mathcal{P}(\tau)')^{\perp}} \equiv 0$. Also, since $\mathcal{P}(\tau)'$ reduces $\tau$ (by Lemma \ref{EMD:lem9}), we conclude $\text{Range}(\tau) \subseteq \mathcal{P}(\tau)'$.
\end{proof}

Analogous to Definition~\ref{defn of eventually EB}, we formally introduce the following definition.

\begin{defn}
	A linear map $\tau$ on $\mathbb{M}_d$ is said to be \textit{eventually mixed twisted dephasing} if there exists $n_\circ\in\mathbb{N}$ such that $\tau^n$ is mixed twisted dephasing for all $n \ge n_\circ$. 
\end{defn}

\begin{rmk}
A unital quantum channel $\tau$ on $\mathbb{M}_d$ is eventually MTD if and only if there exists $n_0\in\mathbb{N}$ such that $\tau^{n_0}$ is MTD. Indeed, the necessity is immediate. Conversely, suppose that $\tau^{n_0}$ is MTD for some $n_0\in\mathbb{N}$. For $n\geq n_0$, we write $\tau^n=\tau^{n-n_0}\circ\tau^{n_0}$. By \cite[Theorem 3.9]{BD}, there exists $N\in\mathbb{N}$ such that $\tau^{n-n_0}$ is mixed unitary whenever $n\geq N+n_0$. Since the composition of a mixed unitary channel with an MTD is again MTD, it follows that $\tau^n$ is MTD for all $n\geq N+n_0$. Hence, $\tau$ is eventually MTD.
\end{rmk}

We are now ready to state the main theorem of this section. This characterizes eventually EB unital channels in terms of some commutation relations among their eigenvectors. Consequently, we establish that for a unital quantum channel, the notions of being eventually EB, eventually PPT, eventually $2$-copositive, and eventually MTD are equivalent. To establish this result, we make use of the interiority of the trace-preserving conditional expectation onto the peripheral space in the set of mixed twisted dephasing channels, as established in the previous section.

\begin{thm}\label{EMD:thm5}
	Let $\tau$ be a unital quantum channel on $\mathbb{M}_d$. Then, the following statements are equivalent:
	\begin{enumerate}
		\item $\tau$ is eventually MTD;
		\item $\tau$ is eventually EB;
		\item $\tau$ is eventually PPT;
		\item $\tau$ is eventually $2$-copositive;
		\item There exists an integer $n_0 \in \mathbb{N}$ such that $\text{Range}(\tau^{n_0}) \subseteq \mathcal{P}(\tau)'$;
        \item The intersection $\bigcap_{n=1}^\infty \text{Range}(\tau^n)$ is contained in $\mathcal{P}(\tau)'$;
        \item All eigenvectors of $\tau$ corresponding to  non-zero eigenvalues commute with all peripheral eigenvectors, that is, $\ker(\tau - \lambda\operatorname{id}_d)\subseteq \mathcal{P}(\tau)'$ for all $\lambda \neq 0$. 
	\end{enumerate}
\end{thm}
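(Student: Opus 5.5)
The plan is to run the cycle (1) $\Rightarrow$ (2) $\Rightarrow$ (3) $\Rightarrow$ (4) $\Rightarrow$ (5) $\Rightarrow$ (1), and then attach (6) and (7) to (5) by linear algebra.

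\textbf{The easy implications.} The steps (1) $\Rightarrow$ (2) $\Rightarrow$ (3) $\Rightarrow$ (4) are immediate: MTD channels are EB, EB maps are PPT, and PPT maps are $2$-copositive. For (4) $\Rightarrow$ (5), suppose $\tau^{n_0}$ is $2$-copositive. It is a unital channel, so Proposition \ref{EMD:prop4} gives $\text{Range}(\tau^{n_0})\subseteq\mathcal{P}(\tau^{n_0})'$. Since $\mathcal{P}(\tau^{n_0})=\mathcal{P}(\tau)$, this is exactly (5).

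\textbf{The main step, (5) $\Rightarrow$ (1).} First we show $\mathcal{P}(\tau)$ is commutative. Since $\tau$ restricted to $\mathcal{P}(\tau)$ is a bijection, $\mathcal{P}(\tau)\subseteq\text{Range}(\tau^{n_0})\subseteq\mathcal{P}(\tau)'$. So we may write $\mathcal{A}:=\mathcal{P}(\tau)$, which is a commutative $C^*$-algebra.

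Take $U$ as in Proposition \ref{EMD:thm1} and set $\sigma_n=\text{Ad}_{U^{*n}}\circ\tau^n$. Then:
\begin{itemize}
\item $\sigma_n$ fixes $\mathcal{A}$ pointwise;
\item $\sigma_n\to\mathbb{E}_{\mathcal{A}}$.
\end{itemize}
We also need $\text{Range}(\sigma_n)\subseteq\mathcal{A}'$ for $n\ge n_0$. The range of $\tau^n$ lies in $\mathcal{A}'$. Moreover $\text{Ad}_{U^*}$ maps $\mathcal{A}$ onto $\mathcal{A}$, because $\text{Ad}_U$ acts on $\mathcal{A}$ as the automorphism $\tau|_{\mathcal{A}}$. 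Hence $\text{Ad}_{U^*}$ also preserves $\mathcal{A}'$.

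Therefore $\sigma_n$ is eventually in the $\varepsilon$-neighbourhood of Theorem \ref{EMD:cor1}, so $\sigma_n$ is MTD. Finally $\tau^n=\text{Ad}_{U^n}\circ\sigma_n$ is MTD, since composing an MTD channel with a unitary conjugation stays MTD.

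The subtle point I expect is the identification $U\mathcal{A}U^*=\mathcal{A}$. It should follow from $\tau(X)=UXU^*$ on $\mathcal{A}$ and the fact that $\tau$ maps $\mathcal{A}$ onto itself.

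\textbf{Equivalence with (6) and (7).} Decompose $\mathbb{M}_d$ by the Jordan form of $\tau$. For large $n$, $\text{Range}(\tau^n)$ stabilises to $R_\infty$, the sum of the generalized eigenspaces for the nonzero eigenvalues. Hence $\bigcap_n\text{Range}(\tau^n)=R_\infty=\text{Range}(\tau^{n})$ for $n\ge d^2$, which gives (5) $\Leftrightarrow$ (6), given that the range decreases in $n$.

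(6) $\Rightarrow$ (7) is trivial, since eigenvectors for nonzero eigenvalues lie in $R_\infty$.

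For (7) $\Rightarrow$ (6) we must pass from eigenvectors to generalized eigenvectors. Here I would use Proposition \ref{EMD:lem9}: $\mathcal{P}(\tau)'$ is $\tau$-invariant under hypothesis (7), because $\mathcal{P}(\tau)$ is commutative. Indeed, peripheral eigenvectors have nonzero eigenvalue, so by (7) they commute with each other. Thus $\mathcal{P}(\tau)'=Z(\mathcal{P}(\tau))'$ is a reducing subspace.

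Consider the compression of $\tau$ to the complement $(\mathcal{P}(\tau)')^\perp$, which is invariant under $\tau$. By (7) it has no eigenvector with nonzero eigenvalue, so it is nilpotent. Hence $R_\infty$, which is contained in $\text{Range}(\tau^n)$ for large $n$, lies in $\mathcal{P}(\tau)'$. This nilpotency argument is the second place where care is needed.
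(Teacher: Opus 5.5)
Your proof is correct and uses the same ingredients as the paper's. These are Proposition \ref{EMD:prop4} with $\mathcal{P}(\tau^{n_0})=\mathcal{P}(\tau)$ for (4)$\Rightarrow$(5), the twisted iterates $\sigma_n=\operatorname{Ad}_{U^{*n}}\circ\tau^n$ from Proposition \ref{EMD:thm1} fed into Theorem \ref{EMD:cor1}, and the reducing-subspace/nilpotency argument based on Proposition \ref{EMD:lem9}. The difference is only organizational: the paper closes the cycle via (5)$\Rightarrow$(6)$\Rightarrow$(7)$\Rightarrow$(1) and does the nilpotency step inside (7)$\Rightarrow$(1), whereas you prove (5)$\Rightarrow$(1) directly (getting $\operatorname{Range}(\sigma_n)\subseteq\mathcal{P}(\tau)'$ because $\operatorname{Ad}_{U^*}$ preserves $\mathcal{P}(\tau)'$) and move the nilpotency argument into (7)$\Rightarrow$(6).
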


\begin{proof}
	  The implications (1) $\Rightarrow$ (2) $\Rightarrow$ (3) $\Rightarrow$ (4) and (5) $\Rightarrow$ (6) are immediate. We proceed to prove the remaining implications.

	   \noindent \textbf{(4) $\Rightarrow$ (5):} Assume the statement (4) holds. Then, there is $n_0\in\mathbb{N}$ such that $\tau^{n_0}$ is $2$-copositive. Proposition \ref{EMD:prop4} implies that $\text{Range}(\tau^{n_0}) \subseteq \mathcal{P}(\tau^{n_0})'$. By \cite[Theorem 3.1]{BKT}, the peripheral space is invariant under taking powers of $\tau$, that is, $\mathcal{P}(\tau^{n_0}) = \mathcal{P}(\tau)$, which yields the desired inclusion.

	     \noindent \textbf{(6) $\Rightarrow$ (7):} Let $X$ be an eigenvector satisfying $\tau(X) = \lambda X$ for some $\lambda \neq 0$. For any $n \in \mathbb{N}$, we can write $X = \lambda^{-n}\tau^n(X)$, yielding $X \in \text{Range}(\tau^n)$. Thus, $X \in \bigcap_{n=1}^\infty \text{Range}(\tau^n)$. Hypothesis (6) forces $X \in \mathcal{P}(\tau)'$, that is, $X$ commutes with all peripheral eigenvectors. 
	     
	     \noindent \textbf{(7) $\Rightarrow$ (1):} Assume the statement (7) holds. Firstly, we note that because peripheral eigenvectors themselves correspond to non-zero eigenvalues, they automatically commute with each other. This ensures the commutativity of $\mathcal{P}(\tau)$.  By Proposition 6.2 \ref{EMD:thm1}, there exists $U\in\mathbb{U}(d)$ such that $\sigma_n = \text{Ad}_{U^n} \circ \tau^n$ fixes the commutative $C^*$-subalgebra $\mathcal{P}(\tau)$ pointwise for every $n$ and converges to $\mathbb{E}_{\mathcal{P}(\tau)}$. Note, by Lemma \ref{EMD:lem5} and \ref{EMD:lem9}, that $\mathcal{P}(\tau)'=Z(\mathcal{P}(\tau))'$ is a reducing subspace for both $\tau$ and $\sigma_n$. The assumption that $\ker(\tau - \lambda\operatorname{id}_d) \subseteq \mathcal{P}(\tau)'$ for all $\lambda \neq 0$ implies that the restriction of $\tau$ to the orthogonal complement $(\mathcal{P}(\tau)')^{\perp}$ has no non-zero eigenvalues. Thus, there exists an integer $n_0$ such that $\tau^n|_{(\mathcal{P}(\tau)')^{\perp}} \equiv 0$, and hence $\sigma_n|_{(\mathcal{P}(\tau)')^{\perp}} \equiv 0$, for all $n \ge n_0$. This nilpotency forces $\text{Range}(\sigma_n) \subseteq \mathcal{P}(\tau)'$ for all $n \ge n_0$. Therefore, $\sigma_n$ is a sequence of unital quantum channels satisfying $\mathcal{P}(\tau) \subseteq \text{Fix}(\sigma_n) \cap \text{Range}(\sigma_n)'$ for all $n\geq n_\circ$ and converges to $\mathbb{E}_{\mathcal{P}(\tau)}$. Hence, by Theorem \ref{EMD:cor1}, $\sigma_n$ is MTD for all sufficiently large $n$. Since $\mathrm{MTD}(d)$ is closed under composition with mixed unitaries from either side, $\tau^n = \text{Ad}_{(U^n)^*} \circ \sigma_n$ is MTD for large $n$. Therefore, $\tau$ is eventually MTD.
\end{proof}
\begin{rmk}
    The nested sequence of subspaces $\text{Range}(\tau) \supseteq \text{Range}(\tau^2) \supseteq \cdots$ stabilizes as the underlying space is finite-dimensional. The resulting intersection $\bigcap_{n=1}^\infty \text{Range}(\tau^n)= \bigcap _{n=1}^{d^2}\text{Range}(\tau ^n)$ is the subspace spanned by the generalized eigenvectors corresponding to non-zero eigenvalues of $\tau$. Thus, condition (6) in Theorem \ref{EMD:thm5} is equivalent to requiring that every such generalized eigenvector belongs to the commutant $\mathcal{P}(\tau)'$.

 The   condition (7) in  Theorem \ref{EMD:thm5} restricts the commutativity requirement to eigenvectors associated with non-zero eigenvalues. One may ask whether this condition is automatic  for eigenvectors corresponding to zero eigenvalue, that is for matrices in the kernel of $\tau $. But this is not the case as Example \ref{EMD:eg6} shows. The reason for this is that the kernel is not represented in $\bigcap_{n=1}^\infty \text{Range}(\tau^n).$
\end{rmk}

\begin{rmk}
Let $\mathrm{EEB}(d)$ denote the set of all eventually EB unital quantum channels on $\mathbb{M}_d$. A set $\mathcal{S}$ is said to be \textit{absorbing } in $\mathrm{EEB}(d)$ if, for every $\tau\in\mathrm{EEB}(d)$, there exists $n_0\in\mathbb{N}$ such that $\tau^n\in\mathcal{S}$ for all $n\geq n_0$. By Theorem \ref{EMD:thm5}, $\mathrm{MTD}(d)$ is absorbing in $\mathrm{EEB}(d)$. Recall that $\mathrm{MTD}(d)$ is convex and closed under composition with unitary channels. In fact, it is the minimal absorbing set in $\mathrm{EEB}(d)$ exhibiting these properties. To see this, observe that any absorbing set in $\mathrm{EEB}(d)$ must contain the idempotent channels $\beta_{U,U}$ for all $U \in \mathbb{U}(d)$. If this set is additionally closed under composition with unitary channels, it captures every twisted dephasing channel via the relation $\operatorname{Ad}_W \circ \beta_{U,U} = \beta_{WU,U}$. Therefore, convexity forces the inclusion of their convex combinations, which is $\mathrm{MTD}(d)$.
\end{rmk}

\begin{rmk}\label{universal bound}
      For $d \ge 2$, there is no integer $N\in\mathbb{N}$ such that $\tau^N$ is $2$-copositive for all $\tau\in\mathrm{EEB}(d)$.
      \begin{proof}
          Let $N\in\mathbb{N}$ and $\lambda\in((d+1)^{-\frac{1}{N}},1)$. Consider the depolarizing channel $\tau=\lambda\text{id}_d+(1-\lambda)\delta_d$. Check that $\tau^n=\lambda^n\text{id}_d+(1-\lambda^n)\delta_d$ for $n\geq 1$. By Example \ref{example in appendix}, $\tau\in\mathrm{EEB}(d)$, but its $N$-th power, $\tau^N$ is not $2$-copositive as $\lambda^N>\frac{1}{d+1}$.
      \end{proof}
  \end{rmk}

We will now discuss some consequences. Recall that a unital quantum channel $\tau$ on $\mathbb{M}_d$ is primitive (that is, irreducible with $\mathrm{spec}(\tau) \cap \mathbb{T} = \{1\}$) if and only if $\mathcal{P}(\tau) = \mathbb{C}I_d$ (see \cite{BKT, MR}).

\begin{cor}\label{primitive}
	Any unital primitive channel on $\mathbb{M}_d$ is eventually MTD.
\end{cor}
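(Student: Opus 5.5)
The plan is to deduce the corollary directly from Theorem \ref{EMD:thm5}, by checking condition (7) there, or equivalently condition (5). Let $\tau$ be a unital primitive channel on $\mathbb{M}_d$. By the characterization of primitivity recalled just before the statement, $\mathcal{P}(\tau)=\mathbb{C}I_d$. The commutant is therefore $\mathcal{P}(\tau)'=(\mathbb{C}I_d)'=\mathbb{M}_d$.

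Condition (5) of Theorem \ref{EMD:thm5} asks for $\text{Range}(\tau^{n_0})\subseteq\mathcal{P}(\tau)'$. This now holds trivially for $n_0=1$, since every matrix commutes with the scalars. In the language of (7), every eigenvector, for any eigenvalue, commutes with the peripheral eigenvectors, which are just the multiples of $I_d$. Hence Theorem \ref{EMD:thm5} gives that $\tau$ is eventually MTD, and also eventually EB, eventually PPT and eventually $2$-copositive.

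For a more self-contained route, one can bypass the general theorem. Proposition \ref{EMD:thm1} with $\mathcal{P}(\tau)=\mathbb{C}I_d$ yields a unitary $U$ such that $\text{Ad}_{U^{*n}}\circ\tau^n\to\mathbb{E}_{\mathbb{C}I_d}=\delta_d$. Theorem \ref{EMD:cor2} then shows that $\text{Ad}_{U^{*n}}\circ\tau^n$ is MTD for all large $n$. Composing with the unitary channel $\text{Ad}_{U^n}$ preserves the MTD property, so $\tau^n$ is MTD for all large $n$. In fact $U$ may be taken to be $I_d$, because primitivity gives $\tau^n\to\delta_d$ directly.

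I do not expect a genuine obstacle. The only point to be careful about is the cited equivalence between primitivity and $\mathcal{P}(\tau)=\mathbb{C}I_d$ for unital channels, which the paper takes from \cite{BKT, MR}.
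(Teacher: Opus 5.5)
Your proposal is correct and matches the paper's proof: the paper also notes that $\mathcal{P}(\tau)=\mathbb{C}I_d$ for a unital primitive channel, so condition (5) of Theorem \ref{EMD:thm5} holds trivially, and the implication (5) $\Rightarrow$ (1) gives the eventual MTD property. Your self-contained alternative via Proposition \ref{EMD:thm1} and Theorem \ref{EMD:cor2} is also valid; it is simply the paper's (7) $\Rightarrow$ (1) argument specialized to $\mathcal{A}=\mathbb{C}I_d$, where one may take $U=I_d$.
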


\begin{proof}
	Since the peripheral space of a unital primitive channel is trivial, the conclusion follows from Theorem \ref{EMD:thm5} (part (5) $\Rightarrow$ (1)).
\end{proof}

The following example shows that an eventually MTD channel need not be primitive.

\begin{eg}\label{EMD:eg2}
	Let $m_1, m_2$ be two natural numbers and $\tau,\sigma$ be two unital primitive channels on $\mathbb{M}_{m_1}$ and $\mathbb{M}_{m_2}$ respectively. Then, by Corollary \ref{primitive} and Theorem \ref{EMD:thm5} (part $(1) \Rightarrow (4)$), there exists $n\in\mathbb{N}$ such that both $\tau^n$ and $\sigma^n$ are $2$-copositive. Take $\varphi=\tau\oplus \sigma$. Then, $\varphi^{n}$ is also $2$-copositive. Therefore, again by Theorem \ref{EMD:thm5} (part $(4) \Rightarrow (1)$), $\varphi$ is eventually MTD. But $\varphi$ is not primitive because direct sum of primitive channels is not primitive.
\end{eg}

The following corollary strengthens Theorem~\ref{Rahaman's result}.

\begin{cor}
	Let $\tau$ be a unital $2$-copositive channel on $\mathbb{M}_d$. Then, $\tau$ is eventually MTD.
\end{cor}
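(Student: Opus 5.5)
This follows almost immediately from Theorem \ref{EMD:thm5}. The plan is to verify condition (4) of that theorem for $\tau$ and then read off condition (1).

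Since $\tau$ is a unital $2$-copositive channel, take $n_0=1$. By the remark following Definition \ref{defn of eventually EB}, $2$-copositivity is preserved under composition with arbitrary CP maps. Hence, for every $n\ge 1$, the iterate $\tau^n=\tau^{n-1}\circ\tau$ is again $2$-copositive: it is the composition of the $2$-copositive map $\tau$ with the CP map $\tau^{n-1}$. This places $\tau$ in the class of eventually $2$-copositive unital channels, which is statement (4) of Theorem \ref{EMD:thm5}.

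Applying the implication (4) $\Rightarrow$ (1) of Theorem \ref{EMD:thm5} then shows that $\tau$ is eventually MTD. For transparency, that implication runs through three steps:
\begin{itemize}
\item Proposition \ref{EMD:prop4} gives $\text{Range}(\tau)\subseteq\mathcal{P}(\tau)'$.
\item Consequently every eigenvector of $\tau$ with non-zero eigenvalue lies in $\mathcal{P}(\tau)'$, and $\mathcal{P}(\tau)$ is commutative by Proposition \ref{EMD:prop3}.
\item The rotated iterates $\operatorname{Ad}_{U^n}\circ\tau^n$ from Proposition \ref{EMD:thm1} fix $\mathcal{P}(\tau)$ pointwise, have range in $\mathcal{P}(\tau)'$, and converge to $\mathbb{E}_{\mathcal{P}(\tau)}$. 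Theorem \ref{EMD:cor1} therefore makes them MTD for large $n$, and undoing the unitary conjugation preserves the MTD property.
\end{itemize}

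There is no real obstacle here. The only point to check is that $2$-copositivity is stable under precomposition by a CP map. This holds because $(\tau^{n-1}\circ\tau\circ T)\otimes\operatorname{id}_2=(\tau^{n-1}\otimes\operatorname{id}_2)\circ((\tau\circ T)\otimes\operatorname{id}_2)$ is the composition of two positive maps.

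Finally, since every PPT channel is $2$-copositive, the corollary strengthens Theorem \ref{Rahaman's result} in two directions. It weakens the hypothesis from PPT to $2$-copositive, and it upgrades the conclusion from eventually EB to eventually MTD.
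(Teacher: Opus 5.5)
Your proposal is correct and is the paper's own argument: the corollary is proved by a direct appeal to the implication (4) $\Rightarrow$ (1) of Theorem~\ref{EMD:thm5}, and your check that $\tau^n=\tau^{n-1}\circ\tau$ stays $2$-copositive is a valid verification of hypothesis (4). One cosmetic point in your unpacked sketch: since $\tau(X)=UXU^*$ on $\mathcal{P}(\tau)$, the iterates that fix $\mathcal{P}(\tau)$ are $\operatorname{Ad}_{(U^*)^n}\circ\tau^n$, as in Proposition~\ref{EMD:thm1}, not $\operatorname{Ad}_{U^n}\circ\tau^n$.
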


\begin{proof}
	The proof is immediate by Theorem \ref{EMD:thm5} (part $(4)\Rightarrow (1)$).
\end{proof}

The following example shows that an eventually MTD channel need not be $2$-copositive.

\begin{eg}\label{EMD:eg6}
	Let $d\geq 2$ be a natural number. Consider the unital quantum channel $\tau=\lambda\text{id}_d+(1-\lambda)\delta_d$ for $\lambda\in \left(\frac{1}{d+1},1\right)$. Since $\tau$ is primitive, by Corollary \ref{primitive}, $\tau$ is eventually MTD (see Example \ref{example in appendix} for an alternative argument). But $\tau$ is not $2$-copositive.  For details, we refer the reader to Example \ref{example in appendix}.
\end{eg}

  \section{Eventually EB  unital channels in continuous time}\label{continuous time}
   
  In this section, we characterize continuous semigroups of unital quantum channels that become EB after some finite time and establish the equivalence of the properties of being eventually EB, eventually PPT, eventually $2$-copositive, primitive, and eventually MTD.
  
  \begin{defn}
  	A family of linear maps $\{\tau_t\}_{t\geq 0}$ on $\mathbb{M}_d$ is said to be a \textit{one-parameter semigroup} if $\tau_0=\operatorname{id}_d$, $\tau_t\circ\tau_s=\tau_{s+t}$ for all $t,s\geq 0$, and the map $t\mapsto\tau_t$ is continuous. A one-parameter semigroup $\{\tau_t\}_{t\geq 0}$ is said to be a \textit{quantum dynamical semigroup} if $\tau_t$ is CP for every $t$. Moreover, it is said to be a \textit{quantum Markov semigroup} if $\tau_t$ is UCP for every $t$. 
  \end{defn}

  Let $\{\tau_t\}_{t\geq 0}$ be a one-parameter semigroup of linear maps on $\mathbb{M}_d$. Then, the map $\mathcal{L}$ on $\mathbb{M}_d$ defined by
  \begin{align*}
  	\mathcal{L}(X)=\lim_{t\rightarrow 0+}\frac{\tau_t(X)-X}{t}\quad\text{for all } X\in\mathbb{M}_d,
  \end{align*}
  is well-defined, and generates the semigroup via
  \begin{align*}
  	\tau_t(X)=e^{t\mathcal{L}}(X)=\sum_{n=0}^\infty\frac{t^n\mathcal{L}^n(X)}{n!}\quad\text{for all }X\in\mathbb{M}_d\text{ and }t\ge 0.
  \end{align*}
  This map $\mathcal{L}$ is called the \textit{infinitesimal generator} of the semigroup $\{\tau_t\}_{t\geq 0}$. The generator of a quantum dynamical semigroup (or, quantum Markov semigroup) is characterized by the famous GKLS theorem. The reader may refer to \cite{GKS,L} for details.

   \begin{defn}[Peripheral Poisson boundary]
  	For a quantum Markov semigroup $\{\tau_t = e^{t\mathcal{L}}\}_{t \ge 0}$ on $\mathbb{M}_d$, the \textit{peripheral Poisson boundary} of the generator $\mathcal{L}$ (or, of the semigroup), denoted by $\mathcal{P}(\mathcal{L})$, is defined as
  	\[
  	\mathcal{P}(\mathcal{L}) = \text{span}\{X \in \mathbb{M}_d : \mathcal{L}(X) = iaX \text{ for some } a \in \mathbb{R}\}.
  	\]
  \end{defn}
  
  Note that this subspace is unital, self-adjoint preserving, and contained in the peripheral space of $\tau_t$, that is, $\mathcal{P}(\mathcal{L})\subseteq\mathcal{P}(\tau_t)$ for all $t\geq 0$. In fact, it is known that $\mathcal{P}(\mathcal{L})=\mathcal{P}(\tau _t)$ for all $t>0$, but we will not need this fact. Moreover, if $\{\tau_t = e^{t\mathcal{L}}\}_{t \ge 0}$ is a quantum Markov semigroup of quantum channels on $\mathbb{M}_d$,
  
  then the peripheral Poisson boundary $\mathcal{P}(\mathcal{L})$ is a $C^*$-subalgebra of $\mathbb{M}_d$ (see  \cite[Lemma 4.5]{BD}). For further details on the peripheral Poisson boundary, we refer the reader to \cite{BA}.

  \begin{defn}[Primitive]
  	A quantum Markov semigroup $\{\tau_t\}_{t \ge 0}$ on $\mathbb{M}_d$ is said to be \textit{primitive} if there exists a unique, strictly positive density matrix $\rho \in \mathbb{M}_d$ such that $\tau_t^*(\rho) = \rho$ for all $t \ge 0$.
  \end{defn}

   \begin{rmk}
  	The study of irreducibility of positive maps was pioneered by Evans and H{\o}egh-Krohn \cite{EvaDE:HoeR78} and they established the non-commutative Perron-Frobenius theory for irreducible positive maps. Consequently, the irreducibility and ergodic properties of quantum dynamical semigroups were studied by Evans \cite{EvaDE77} and Frigerio \cite{FriA77}. Later, Sanz et al. \cite{SPWC} formally introduced the notion of primitivity for unital quantum channels in terms of irreducibility along with a trivial peripheral spectrum. This framework was subsequently extended to continuous-time quantum Markov semigroups \cite{BCGPY,RD}. For characterizations and sufficient operational conditions for primitivity, we refer the reader to \cite{BCGPY,Wo}. 
  \end{rmk}

   \begin{rmk}\label{EMD:rem1}
  	Let $\{\tau_t = e^{t\mathcal{L}}\}_{t \ge 0}$ be a quantum Markov semigroup of quantum channels on $\mathbb{M}_d$. Then, $\{\tau_t\}_{t\geq 0}$ is primitive if and only if $\text{ker}(\mathcal{L})=\mathbb{C}I_d$.
  \end{rmk}

  \begin{proof}
  	$``\Rightarrow"$. Let $\{\tau_t\}_{t\geq 0}$ be primitive. Then, $\text{ker}(\mathcal{L}^*)=\cap_{t\geq 0}\text{Fix}(\tau_t^*)=\mathbb{C}I_d$. Since $\text{dim}(\text{ker}(\mathcal{L}))=\text{dim}(\text{ker}(\mathcal{L}^*))=1$, we have $\text{ker}(\mathcal{L})=\mathbb{C}I_d$.
  	
  	$``\Leftarrow"$. Let $\text{ker}(\mathcal{L})=\mathbb{C}I_d$. Since $\text{dim}(\text{ker}(\mathcal{L^*}))=\text{dim}(\text{ker}(\mathcal{L}))=1$, we have $\text{ker}(\mathcal{L}^*)=\mathbb{C}I_d$. Therefore, $\cap_{t\geq 0}\text{Fix}(\tau_t^*)=\text{ker}(\mathcal{L}^*)=\mathbb{C}I_d$. Hence, $\{\tau_t\}_{t\geq 0}$ is primitive
  \end{proof}

  	Formally, a one-parameter semigroup $\{\tau_t\}_{t\geq 0}$ is said to be \textit{eventually EB} (respectively, \textit{eventually MTD}, \textit{eventually PPT}, or \textit{eventually $2$-copositive}) if there exists $t_0>0$ such that $\tau_t$ is EB (respectively, MTD, PPT, or $2$-copositive) for every $t\geq t_0$. As in the discrete case, a quantum Markov semigroup $\{\tau_t\}_{t\geq 0}$ of quantum channels is eventually EB (respectively, eventaully MTD, eventually PPT, or eventually $2$-copositive) if and only if $\tau_{t_0}$ is EB (respectively, MTD, PPT, or $2$-copositive) for some $t_0>0$.

  We are now ready to state the main theorem of this section, which characterizes eventual EB property for quantum Markov semigroups of quantum channels and establishes the equivalence between being primitive, eventually EB, eventually PPT, eventually $2$-copositive, and eventually MTD. 
 The equivalence between primitivity and the eventual EB property for quantum Markov semigroups was first established by  Hanson, Rouz\'e, and Fran\c ca \cite[Theorem 2.1]{HRS}.  By framing primitivity through the trivial peripheral Poisson boundary, our framework not only recovers this result for semigroup of unital channels but expands it to include the eventual PPT, eventual $2$-copositive, and eventual MTD properties. Thus, rather than merely breaking entanglement after some time, such semigroups ultimately enter into the convex hull of twisted dephasing channels. Furthermore, our approach involving this boundary offers a different perspective and emphasizes that much like the discrete
setting: the interiority of $\delta_d$ within $\mathrm{MTD}(d)$ (Theorem \ref{EMD:cor2}) is sufficient to guarantee this eventual mixed twisted  behaviour of primitive semigroups in continuous time. 
  
   \begin{thm}\label{EMD:thm4}
  	Let $\{\tau_t = e^{t\mathcal{L}}\}_{t \ge 0}$ be a quantum Markov semigroup of quantum channels on $\mathbb{M}_d$. Then, the following statements are equivalent:
  	\begin{enumerate}
  		\item $\{\tau_t\}$ is eventually MTD;
  		\item $\{\tau_t\}$ is eventually EB;
  		\item $\{\tau_t\}$ is eventually PPT;
  		\item $\{\tau_t\}$ is eventually $2$-copositive;
  		\item The peripheral Poisson boundary is trivial, that is, $\mathcal{P}(\mathcal{L})=\mathbb{C}I_d$;
  		\item $\{\tau_t\}$ is primitive;
  		\item $\lim\limits_{t\to\infty}\tau_t=\delta_d$.
  	\end{enumerate}
  \end{thm}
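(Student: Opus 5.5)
The plan is to prove the cycle (1) $\Rightarrow$ (2) $\Rightarrow$ (3) $\Rightarrow$ (4) $\Rightarrow$ (5) $\Rightarrow$ (6) $\Rightarrow$ (7) $\Rightarrow$ (1). The first three implications are immediate, since MTD $\subseteq$ EB $\subseteq$ PPT $\subseteq$ $2$-copositive.

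\textbf{(4) $\Rightarrow$ (5).} Suppose $\tau_{t_0}$ is $2$-copositive. By Proposition \ref{EMD:prop3}, $\mathcal{P}(\tau_{t_0})$ is commutative, and hence so is its subalgebra $\mathcal{P}(\mathcal{L})$. By Proposition \ref{EMD:prop4}, $\text{Range}(\tau_{t_0})\subseteq\mathcal{P}(\tau_{t_0})'\subseteq\mathcal{P}(\mathcal{L})'$. Each $\tau_{t_0}=e^{t_0\mathcal{L}}$ is invertible, so $\text{Range}(\tau_{t_0})=\mathbb{M}_d$. Therefore $\mathcal{P}(\mathcal{L})'=\mathbb{M}_d$, which forces $\mathcal{P}(\mathcal{L})\subseteq Z(\mathbb{M}_d)=\mathbb{C}I_d$. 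This is exactly where continuous time differs from the discrete case: invertibility kills the nilpotent part.

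\textbf{(5) $\Rightarrow$ (6).} Since $\ker\mathcal{L}\subseteq\mathcal{P}(\mathcal{L})=\mathbb{C}I_d$ and $I_d\in\ker\mathcal{L}$ by unitality, we get $\ker\mathcal{L}=\mathbb{C}I_d$. Remark \ref{EMD:rem1} then gives primitivity.

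\textbf{(6) $\Rightarrow$ (7).} This step is the main obstacle. We need every eigenvalue of $\mathcal{L}$ other than $0$ to have strictly negative real part, and the eigenvalue $0$ to be semisimple.
\begin{itemize}
\item Since each $\tau_t$ is a contraction in a suitable norm (unital positive maps are contractive in operator norm), all eigenvalues of $\mathcal{L}$ satisfy $\mathrm{Re}\le 0$, and eigenvalues on the imaginary axis are semisimple.
\item I plan to show that a purely imaginary eigenvalue $ia\neq 0$ cannot occur. If $\mathcal{L}(X)=iaX$, then $\tau_t(X)=e^{iat}X$ is a peripheral eigenvector of each $\tau_t$. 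Since $\mathcal{P}(\mathcal{L})$ is a $C^*$-algebra on which $\tau_t$ acts by automorphisms $\text{Ad}_{U_t}$, the algebra generated contains $X^*X$ and $XX^*$, both fixed. Primitivity gives $\ker\mathcal{L}=\mathbb{C}I_d$, so $X^*X$ and $XX^*$ are scalars and $X$ is a multiple of a unitary. Then $\tau_t$ restricted to the commutative algebra generated by $X$ is an automorphism $e^{iat}$ on $X$. The spectral projections of $X$ would lie in $\mathcal{P}(\mathcal{L})$, and continuity in $t$ of the induced permutation of minimal projections forces it to be trivial, so they are fixed, hence scalar. Thus $X$ is scalar, contradicting $a\neq 0$.
\item Alternatively, and more simply, I would use $\text{Fix}(\tau_t)\supseteq\ker\mathcal{L}$ together with the fact that for small $t$, $e^{iat}\neq 1$ but the automorphism group $t\mapsto\text{Ad}_{U_t}|_{\mathcal{P}(\mathcal{L})}$ is connected, so it acts trivially on the center. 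In either case $\mathcal{P}(\mathcal{L})$ is fixed pointwise, hence equals $\ker\mathcal{L}=\mathbb{C}I_d$.
\end{itemize}
With the spectrum settled, $\tau_t$ converges to the spectral projection onto $\ker\mathcal{L}=\mathbb{C}I_d$ along the other generalized eigenspaces. This projection is trace-preserving and unital, so it equals $\delta_d$. (This step also covers (5) $\Rightarrow$ (7) directly.)

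\textbf{(7) $\Rightarrow$ (1).} By Theorem \ref{EMD:cor2}, there is $\varepsilon>0$ such that every unital channel within $\varepsilon$ of $\delta_d$ is MTD. Since $\tau_t\to\delta_d$, choose $t_0$ with $\|\tau_t-\delta_d\|<\varepsilon$ for all $t\ge t_0$. Then $\tau_t$ is MTD for all $t\ge t_0$, i.e.\ the semigroup is eventually MTD. Unlike the discrete case, no argument about mixed unitaries is needed here, because the estimate already holds for every $t\ge t_0$.
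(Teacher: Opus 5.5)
Your proposal is correct and follows the paper's proof essentially step for step: the same cycle of implications, the same use of Proposition \ref{EMD:prop4}, Remark \ref{EMD:rem1}, contractivity with the generalized eigenspace decomposition, and Theorem \ref{EMD:cor2}. The only variation is in (6)$\Rightarrow$(7): once $X$ is a scalar multiple of a unitary, you finish by continuity of the induced permutation of its spectral projections, whereas the paper uses $\mathrm{spec}(X)=e^{iat}\,\mathrm{spec}(X)$ for all $t\ge 0$. Rely on your first bullet, though, because the ``simpler'' alternative only shows that the automorphism group fixes the center of $\mathcal{P}(\mathcal{L})$ (making it a factor), which does not by itself give that $\mathcal{P}(\mathcal{L})$ is fixed pointwise.
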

  
  \begin{proof}
  	The assertions $(1)\Rightarrow(2)\Rightarrow(3)\Rightarrow(4)$ are immediate. We prove the remaining implications below.
  	
  	  \noindent\textbf{(4) $\Rightarrow$ (5):} Suppose there exists a time $t_0 > 0$ such that $\tau_{t_0}$ is $2$-copositive. By Proposition \ref{EMD:prop4}, the range of $\tau_{t_0}$ must be contained within the commutant of its peripheral space, that is, $\text{Range}(\tau_{t_0}) \subseteq \mathcal{P}(\tau_{t_0})'$. Because $\tau_{t_0} = e^{t_0\mathcal{L}}$ is invertible linear map, its range is the whole of $\mathbb{M}_d$. This forces $\mathcal{P}(\tau_{t_0})' = \mathbb{M}_d$, which implies $\mathcal{P}(\tau_{t_0}) = \mathbb{C}I_d$. Since the peripheral Poisson boundary of the generator satisfies $\mathcal{P}(\mathcal{L}) \subseteq \mathcal{P}(\tau_{t_0})$, we obtain $\mathcal{P}(\mathcal{L}) = \mathbb{C}I_d$.

  	  \noindent \textbf{(5) $\Rightarrow$ (6):} Assume $\mathcal{P}(\mathcal{L}) = \mathbb{C}I_d$. Since $\tau_t$'s are unital, $\mathcal{L}(I_d)=0$. Therefore, $\mathbb{C}I_d\subseteq\ker(\mathcal{L})\subseteq \mathcal{P}(\mathcal{L})=\mathbb{C}I_d$. This forces $\ker(\mathcal{L})=\mathbb{C}I_d$. Hence, by Remark \ref{EMD:rem1}, $\{\tau_t\}_{t\geq 0}$ is primitive.

  	  \noindent \textbf{(6) $\Rightarrow$ (7):} Let $\{\tau_t\}_{t\ge0}$ be primitive. We first establish that $\mathcal{L}$ admits no non-zero purely imaginary eigenvalues. Suppose $\mathcal{L}(X) = iaX$ for some $a \in \mathbb{R}$ and non-zero $X \in \mathbb{M}_d$. Then, $\tau_t(X)=e^{t\mathcal{L}}(X)=e^{iat}X$, yielding $X\in\mathcal{P}(\tau_t)$ for all $t\ge 0$. Since $\tau_t$ restricts to its peripheral space $\mathcal{P}(\tau_t)$ is a $*$-automorphism, we have $\tau_t(X^*X)=\tau_t(X)^*\tau_t(X)=X^*X$, implying $X^*X \in\bigcap_{t\ge0}\text{Fix}(\tau_t)= \ker(\mathcal{L})$, which equals $\mathbb{C}I_d$ by Remark \ref{EMD:rem1}. Therefore, by the polar decomposition, $X$ is a non-zero scalar multiple of a unitary, so it possesses at least one non-zero eigenvalue. Further, since unital $*$-automorphism preserves spectrum, we have $\mathrm{spec}(X) = \mathrm{spec}(e^{iat}X) = e^{iat}\mathrm{spec}(X)$ for all $t \ge 0$. Because $X$ has a non-zero eigenvalue and its spectrum is finite, we must have $e^{iat} =1$ for all $t\ge0$, forcing $a = 0$. Hence, $\mathcal{L}$ has no non-zero purely imaginary eigenvalues.

  	   Next, we show that $\lim_{t \to \infty} \tau_t$ exists. We decompose the space $\mathbb{M}_d$ into the generalized eigenspaces of $\mathcal{L}$, namely $\mathbb{M}_d = \bigoplus_{k=1}^m \ker(\mathcal{L}-\lambda_k\text{id}_d)^{n_k}$. For any $X \in \ker(\mathcal{L}-\lambda_k\text{id}_d)^{n_k}$, the norm of semigroup acting on $X$ expands as
  	  \[
  	  \|\tau_t(X)\| = \left\|e^{t\lambda_k\text{id}_d+t(\mathcal{L}-\lambda_k\text{id}_d)}(X)\right\| = e^{t\Re(\lambda_k)}\left\| \sum_{j=0}^{n_k-1} \frac{t^j}{j!} (\mathcal{L}-\lambda_k\text{id}_d)^j(X)\right\|.
  	  \]
  	  Because each $\tau_t$ is unital, it is contractive, meaning $\|\tau_t(X)\| \le \|X\|$ for all $t \ge 0$. This uniform boundedness refutes the case $\Re(\lambda_k) > 0$, which would induce unbounded exponential growth. If $\Re(\lambda_k) < 0$, the expression clearly vanishes as $t \to \infty$. If $\Re(\lambda_k) = 0$, we must have $n_k = 1$; otherwise, the terms $t^j$ (for $j \ge 1$) would force $\|\tau_t(X)\| \to \infty$. Thus, in this case $n_k=1$, yielding $\lambda_k$ to be purely imaginary eigenvalue of $\mathcal{L}$. By our earlier deduction, $\lambda_k$ must be $0$ and the corresponding eigenvector $X = cI_d$ for some $c \in \mathbb{C}$ (by Remark  \ref{EMD:rem1}). This confirms that $\lim_{t\to\infty}\tau_t(X)$ converges for all $X$.

  	    Let $E=\lim_{t \to \infty} \tau_t$. Finally, we show that $E= \delta_d$. For any $X=E(Y) \in \text{Range}(E)$ and $s\ge0$, $\tau_s(X) = \lim_{t \to \infty} \tau_{s+t}(Y) = E(Y) = X$. Hence, $\mathcal{L}(X) = 0$, implying $\text{Range}(E) \subseteq \ker(\mathcal{L})$. Since $\{\tau_t\}_{t\ge0}$ is primitive, by Remark \ref{EMD:rem1}, $\ker(\mathcal{L})= \mathbb{C}I_d$, yielding $\text{Range}(E)=\mathbb{C}I_d$. Therefore, there is a linear functional $\phi:\mathbb{M}_d\to\mathbb{C}$ such that $E(X)=\phi(X)I_d$ for all $X\in \mathbb{M}_d$. Since $E$ is trace-preserving, applying trace on the both sides we obtain $\phi(X)=\frac{\text{Tr}(X)}{d}$ for all $X\in \mathbb{M}_d$, concluding $E=\delta_d$.

  	    \noindent \textbf{(7) $\Rightarrow$ (1):} Assume $\lim_{t \to \infty} \tau_t = \delta_d$. By Theorem \ref{EMD:cor2}, there exits $t_0 > 0$ such that $\tau_{t}$ is MTD for every $t\geq t_0$. Therefore, $\{\tau_t\}_{t\geq 0}$ is eventually MTD.
  \end{proof}

   \section{Examples}\label{appendix}
   This section presents an instructive class of examples.
   
  \begin{eg}\label{example in appendix}
  	Let $d\geq 2$ be a natural number, and for $\lambda\in\mathbb{R}$, define $\tau_\lambda=\lambda\,\text{id}_d+(1-\lambda)\delta_d$. Then, the following statements hold:
  	\begin{enumerate}
  		\item $\tau_\lambda$ is $2$-positive if and only if $\lambda\in\left[-\frac{1}{2d-1},1\right]$;
  		\item $\tau_\lambda$ is $2$-copositive if and only if $\lambda\in\left[-\frac{1}{d-1},\frac{1}{d+1}\right]$;
  		\item $\tau_\lambda$ is CP if and only if $\lambda\in \left[-\frac{1}{d^2-1},1\right]$;
  		\item $\tau_\lambda$ is coCP if and only if $\lambda\in\left[-\frac{1}{d-1},\frac{1}{d+1}\right]$;
  		\item $\tau_\lambda$ (hence $\tau_\lambda\circ T$) is EB if and only if $\lambda\in\left[-\frac{1}{d^2-1},\frac{1}{d+1}\right]$;
  		\item $\tau_\lambda$ (hence $\tau_\lambda\circ T$) is MTD if and only if $\lambda\in\left[-\frac{1}{d^2-1},\frac{1}{d+1}\right]$;
  	\end{enumerate}
  \end{eg}
  
  \begin{proof}
  	We will prove statements (1), (2) and (6). Statements (3) and (4) follow directly from Theorem \ref{CJ isomorphism}. For a proof of statement (5), the reader may refer to \cite{HH}.

   \noindent\textbf{Proof of (1):} Let $\xi \in \mathbb{C}^d \otimes \mathbb{C}^2$. By the Schmidt decomposition, we write $\xi = a(x \otimes u) + b(y \otimes v)$ where $\{x, y\} \subseteq\mathbb{C}^d$ and $\{u, v\} \subseteq\mathbb{C}^2$ are orthonormal sets and $a, b \ge 0$. Evaluating $X_{\lambda,\xi} = (\tau_\lambda\otimes\text{id}_{2})(|\xi\ra\la\xi|)$ yields
  	\[
  	X_{\lambda,\xi} = \lambda|\xi\ra\la\xi| + \frac{1-\lambda}{d}(I_d\otimes \rho),
  	\]
  	where $\rho = a^2|u\ra\la u| + b^2|v\ra\la v|\in \mathbb{M}_2$. Define the subspace $W = \text{span}\{x \otimes u, y \otimes v\}$. Since $\text{Range}(|\xi\ra\la\xi|) \subseteq W$ and $W$ is invariant under $I_d\otimes \rho$, we have that $X_{\lambda,\xi} \ge 0$ if and only if its restrictions to both $W$ and $W^\perp$ are positive. On $W^\perp$, $X_{\lambda,\xi}$ acts as $\frac{1-\lambda}{d}(\rho \otimes I_d)$, which is positive if and only if $\lambda \le 1$. On $W$, the matrix representation of $X_{\lambda,\xi}$ with respect to the basis $\{x \otimes u, y \otimes v\}$ is
  	\[
  	X_{\lambda,\xi}|_W = \begin{bmatrix}
  		a^2 c_\lambda & a b \lambda\\
  		a b \lambda & b^2 c_\lambda
  	\end{bmatrix}, \quad \text{where} \quad c_\lambda= \frac{\lambda(d-1)+1}{d}.
  	\]
  	This block is positive semi-definite if and only if $c_\lambda \ge 0$ and its determinant is non-negative, yielding $a^2b^2(c_{\lambda}^2 - \lambda^2) \ge 0$. For strictly positive $a, b$, this implies that $(c_\lambda+\lambda)(c_\lambda-\lambda)\ge0\Leftrightarrow ((2d-1)\lambda+1)(1-\lambda)\ge0$. This enforces $-\frac{1}{2d-1}\le \lambda\le 1$. Therefore, $X_{\lambda,\xi} \ge 0$ for any $\xi$ if and only if $\lambda \in \left[-\frac{1}{2d-1},1\right]$.

  	\noindent\textbf{Proof of (2):} Let $\xi \in \mathbb{C}^d \otimes \mathbb{C}^2$. By the Schmidt decomposition, we write $\xi = a(x \otimes u) + b(y \otimes v)$ where $\{x, y\} \subseteq\mathbb{C}^d$ and $\{u, v\} \subseteq \mathbb{C}^2$ are orthonormal sets and $a, b \ge 0$. Evaluating $Y_{\lambda,\xi} = ((\tau_\lambda\circ T)\otimes \text{id}_{2})(|\xi\ra\la\xi|)$ yields
  	\[
 	Y_{\lambda,\xi} = \lambda(T\otimes \text{id}_{2})(|\xi\ra\la\xi|) + \frac{1-\lambda}{d}(I_d\otimes \rho),
  	\]
  	where $\rho = a^2|u\ra\la u| + b^2|v\ra\la v|\in \mathbb{M}_2$. Expanding $|\xi\ra\la\xi|$ and evaluating the trace on the first tensor component, we observe 
  	\[(T\otimes \text{id}_{2})(|\xi\ra\la\xi|) = a^2 |\overline{x}\ra\la \overline{x}|\otimes|u\ra\la u|+ b^2 |\overline{y}\ra\la \overline{y}| \otimes |v\ra\la v|+ ab |\overline{y}\ra\la \overline{x}|\otimes |u\ra\la v| + ab |\overline{x}\ra\la \overline{y}|\otimes |v\ra\la u|.\]Define the subspaces $W = \text{span}\{ \overline{x}\otimes u, \, \overline{y}\otimes v\}\text{ and }W' = \text{span}\{\overline{y}\otimes u, \, \overline{x}\otimes v\}$. Because $\text{Range}((T\otimes \text{id}_{2})(|\xi\ra\la\xi|)) \subseteq W\oplus W'$ and $W$, $W'$ are invariant under $I_d \otimes \rho$, we have $X_{\lambda,\xi} \ge 0$ if and only if its restriction on all three subspaces $W$, $W'$ and $(W\oplus W')^\perp$ are positive. On $(W\oplus W')^\perp$, $Y_{\lambda,\xi}$ acts as $\frac{1-\lambda}{d}(I_d \otimes \rho)$, which is positive if and only if $\lambda \le 1$. 
  	
  	On $W$, the matrix representation of $Y_{\lambda,\xi}$ with respect to the basis $\{ \overline{x}\otimes u, \, \overline{y}\otimes v\}$ is
  	\[
  	Y_{t,\xi}|_W = \begin{bmatrix}
  		a^2 c_\lambda & 0 \\
  		0 & b^2 c_\lambda
  	\end{bmatrix}, \quad \text{where} \quad c_\lambda= \frac{\lambda(d-1)+1}{d}.
  	\]
  	This block is positive semi-definite if and only if $c_\lambda\ge 0$, yielding $ \lambda \ge -\frac{1}{d-1}$. 
  	
  	Further, on $W'$, the matrix representation of $Y_{\lambda,\xi}$ with respect to the basis $\{\overline{y}\otimes u, \, \overline{x}\otimes v\}$ is
  	\[Y_{\lambda,\xi}\vert{}_{W'} = \begin{bmatrix} \frac{a^2 (1-\lambda)}{d} & ab\lambda\\ ab\lambda & \frac{b^2(1-\lambda)}{d} \end{bmatrix}.\]
  	This block is positive semi-definite if and only if its diagonal entries are non-negative (which confirms $\lambda  \le 1$), and its determinant is non-negative, yielding $a^2b^2((\frac{1-\lambda}{d})^2 - \lambda^2) \ge 0$. For strictly positive $a, b$, this implies that $((1-\lambda)^2-(d\lambda)^2)\ge0\Leftrightarrow\left(1 + (d-1)\lambda\right)\left(1 - (d+1)\lambda\right) \ge 0$. From our analysis on $W$, we already established that $1 + (d-1)\lambda \ge 0$. Therefore, to have non-negativity, the second factor must also be non-negative. This forces $\lambda \le \frac{1}{d+1}$. Therefore, $Y_{\lambda,\xi} \ge 0$ for any $\xi$ if and only if $\lambda \in \left[-\frac{1}{d-1}, \frac{1}{d+1}\right]$.

  	\noindent\textbf{Proof of (6):} By Lemma \ref{EMD:lem2}, for any $U\in\mathbb{U}(d)$, we have the formula
  	\begin{equation}\label{EMD:eq5}
  		\int_{\mathbb{U}(d)} \beta_{V, VU} \, \mathrm{d}V = \mu(U)\,\text{id}_d + \big(1 - \mu(U)\big)\delta_d,
  	\end{equation}
  	where the weight is $\mu(U) =\frac{1}{d^2-1}(\text{Tr}(\Delta_d\circ\text{Ad}_{U^*})-1)= \frac{1}{d^2-1} \big( \sum_{k=1}^d |\la e_k, U e_k \ra|^2 - 1 \big)$. Because $0 \le |\la e_k, U e_k \ra|^2 \le 1$, the continuous map $\mu: \mathbb{U}(d) \to \mathbb{R}$ necessarily has its range contained in $\left[-\frac{1}{d^2-1}, \frac{1}{d+1}\right]$. We claim that $\text{Range}(\mu)=\left[-\frac{1}{d^2-1}, \frac{1}{d+1}\right]$. Evaluating $\mu$ at $U = I_d$ achieves the maximum $\frac{1}{d+1}$, while choosing $U\in \mathbb{M}_d$ to be the cyclic permutation matrix (which possesses a zero diagonal) achieves the minimum $-\frac{1}{d^2-1}$. Therefore, by the path-connectedness of $\mathbb{U}(d)$, we have $\text{Range}(\mu)=\left[-\frac{1}{d^2-1}, \frac{1}{d+1}\right]$, yielding $\tau_\lambda$ is MTD for every $\lambda\in\left[-\frac{1}{d^2-1}, \frac{1}{d+1}\right]$, by Equation \ref{EMD:eq5}.

  	Conversely, let $\tau_\lambda$ be mixed twisted dephasing. Then, $\tau_\lambda$ admits a decomposition $\tau_\lambda= \sum_{k=1}^n p_k \beta_{U_k, V_k}$ where $\{U_k,V_k:1\leq k\leq n\}\subseteq\mathbb{U}(d)$, and $p_k\geq 0$ with $\sum_{k=1}^n p_k=1$. We apply the twirling operation $\mathscr{T}(\varphi) = \int_{\mathbb{U}(d)} \text{Ad}_W \circ \varphi \circ \text{Ad}_{W^*} \, \mathrm{d}W$. Utilizing Lemma \ref{EMD:lem2} again, its action on the individual $\beta_{U,V}$ is
  	\[
  	\mathscr{T}(\beta_{U, V}) = \int_{\mathbb{U}(d)} \beta_{W, W U^* V} \, \mathrm{d}W = \mu(U^* V)\,\text{id}_d + \big(1 - \mu(U^* V)\big)\delta_d.
  	\]
  	Since $\tau_\lambda$ is unitarily covariant by its definition, $\mathscr{T}(\tau_\lambda) = \tau_\lambda$. Applying $\mathscr{T}$ to the assumed convex decomposition of $\tau_\lambda$ forces $\lambda= \sum_{k=1}^n p_k \mu(U_k^* V_k)$. Because every coefficient $\mu(U_k^* V_k)$ is bounded within $\left[-\frac{1}{d^2-1}, \frac{1}{d+1}\right]$, their convex combination $\lambda$ must also reside inside this interval.
  \end{proof}

  \section{Acknowledgments}
  Bhat gratefully acknowledges funding from ANRF (India) through J C Bose Fellowship No. JBR/2021/000024. The research of the second-named author is supported by the NBHM Postdoctoral Fellowship, Department of Atomic Energy, Government of India (File No: 0204/9(34)/2026/R\&D-II/4857). The authors used AI solely to refine the English and create Figures \ref{discrete_EEB_diagram} and \ref{continuous_EEB_diagram}. All mathematical results are the authors' original work.

\bibliographystyle{acm}
\bibliography{bibliography}	

@article {Cho75,
	AUTHOR = {Choi, Man Duen},
	TITLE = {\href{https://doi.org/10.1016/0024-3795(75)90075-0}{Completely positive linear maps on complex matrices}},
	JOURNAL = {Linear Algebra Appl.},
	VOLUME = {10},
	YEAR = {1975},
	NUMBER = {3},
	PAGES = {285--290},
}

@article {TroJA05,
	AUTHOR = {Tropp, Joel A},
	TITLE ={\href{{https://users.cms.caltech.edu/~jtropp/conf/Tro05-Complex-Equiangular-SPIE-preprint.pdf}}{Complex Equiangular Tight Frames}},
	JOURNAL = {Wavelets XI (Proceedings of SPIE)},
	VOLUME = {5914},
	YEAR = {2005},
	PAGES = {10 pp},
}

@article{FicM:MayBR21,
  author = {Fickus, Matthew and Mayo, Benjamin R.},
  title = {\href{https://ieeexplore.ieee.org/document/9281017}{Mutually unbiased equiangular tight frames}},
  journal = {IEEE Trans. Inf. Theory},
  volume = {67},
  number = {3},
  pages = {1656--1667},
  year = {2021},
}

@article {Jam72,
	AUTHOR = {Jamio{\l}kowski, A.},
	TITLE = {\href{https://doi.org/10.1016/0034-4877(72)90011-0}{Linear transformations which preserve trace and positive
	semidefiniteness of operators}},
	JOURNAL = {Rep. Math. Phys.},
	VOLUME = {3},
	YEAR = {1972},
	NUMBER = {4},
	PAGES = {275--278},
}

@article {Kra71,
	AUTHOR = {Kraus, K.},
	TITLE = {\href{https://doi.org/10.1016/0003-4916(71)90108-4}{General state changes in quantum theory}},
	JOURNAL = {Ann. Phys.},
	VOLUME = {64},
	YEAR = {1971},
	PAGES = {311--335},
	ISSN = {0003-4916},
}

@article {BD,
	AUTHOR = {Bhat, B. V. Rajarama and Devendra, Repana},
	TITLE = {\href{https://doi.org/10.48550/arXiv.2512.23598}{On regions of mixed unitarity for semigroups of unital quantum channels}},
	PAGES = {Preprint (2025), https://doi.org/10.48550/arXiv.2512.23598},
}

@article {RP,
	AUTHOR = {Pereira, Rajesh},
	TITLE = {\href{https://doi.org/10.1090/S0002-9939-05-08031-7}{Representing conditional expectations as elementary operators}},
	JOURNAL = {Proc. Amer. Math. Soc.},
	VOLUME = {134},
	YEAR = {2006},
	NUMBER = {1},
	PAGES = {253--258},
}

@article {HU,
	AUTHOR = {Umegaki, Hisaharu},
	TITLE = {\href{https://doi.org/10.2748/tmj/1178245177}{Conditional expectation in an operator algebra}},
	JOURNAL = {Tohoku Math. J. (2)},
	VOLUME = {6},
	YEAR = {1954},
	PAGES = {177--181},
}

@article {H,
	AUTHOR = {Holevo, A. S.},
	TITLE = {\href{https://doi.org/10.1070/rm1998v053n06ABEH000091}{Quantum coding theorems}},
	JOURNAL = {Russian Math. Surveys},
	VOLUME = {53},
	YEAR = {1998},
	NUMBER = {6},
	PAGES = {1295--1331},
}

@article {PPPR,
	AUTHOR = {Pandey, Satish K. and Paulsen, Vern I. and Prakash, Jitendra
	and Rahaman, Mizanur},
	TITLE = {\href{https://doi.org/10.1063/1.5045184}{Entanglement breaking rank and the existence of {SIC} {POVM}s}},
	JOURNAL = {J. Math. Phys.},
	VOLUME = {61},
	YEAR = {2020},
	NUMBER = {4},
	PAGES = {14pp},
}

@article {HHH,
	AUTHOR = {Horodecki, Micha\l{} and Horodecki, Pawe\l{} and Horodecki,
	Ryszard},
	TITLE ={\href{https://doi.org/10.1016/S0375-9601(96)00706-2}{Separability of mixed states: necessary and sufficient
	conditions}},
	JOURNAL = {Phys. Lett. A},
	VOLUME = {223},
	YEAR = {1996},
	NUMBER = {1-2},
	PAGES = {1--8},
}

@article {HO,
	AUTHOR = {Horodecki, Pawe\l},
	TITLE = {\href{https://doi.org/10.1016/S0375-9601(97)00416-7}{Separability criterion and inseparable mixed states with
	positive partial transposition}},
	JOURNAL = {Phys. Lett. A},
	VOLUME = {232},
	YEAR = {1997},
	NUMBER = {5},
	PAGES = {333--339},
}

@article {BCHW,
	AUTHOR = {B{\"a}uml, Stefan and Christandl, Matthias and Horodecki, Karol and Winter, Andreas},
	TITLE = {\href{https://doi.org/10.1038/ncomms7908}{Limitations on quantum key repeaters}},
	JOURNAL = {Nat Commun},
	VOLUME = {6},
	YEAR = {2015},
	NUMBER = {6908},
	PAGES = {5pp},
}

@article {RJP,
	AUTHOR = {Rahaman, Mizanur and Jaques, Samuel and Paulsen, Vern I.},
	TITLE = {\href{https://doi.org/10.1063/1.5024385}{Eventually entanglement breaking maps}},
	JOURNAL = {J. Math. Phys.},
	VOLUME = {59},
	YEAR = {2018},
	NUMBER = {6},
	PAGES = {11pp},
}

@article {HH,
	AUTHOR = {Horodecki, Micha\l{} and Horodecki, Pawe\l{}},
	TITLE ={\href{https://doi.org/10.1103/PhysRevA.59.4206}{Reduction criterion of separability and limits for a class of distillation protocols}},
	JOURNAL = {Phys. Rev. A},
	VOLUME = {59},
	YEAR = {1999},
	NUMBER = {6},
	PAGES = {4206--4216},
}

@article {GB,
	AUTHOR = {Gurvits, Leonid and Barnum, Howard},
	TITLE ={\href{https://doi.org/10.1103/PhysRevA.66.062311}{Largest separable balls around the maximally mixed bipartite quantum state}},
	JOURNAL = {Phys. Rev. A},
	VOLUME = {66},
	YEAR = {2002},
	PAGES = {062311},
}

@article {JW2,
	AUTHOR = {Watrous, John},
	TITLE = {\href{https://www.rintonpress.com/xxqic9/qic-9-56/0406-0413.pdf}{Mixing doubly stochastic quantum channels with the completely
	depolarizing channel}},
	JOURNAL = {Quantum Inf. Comput.},
	VOLUME = {9},
	YEAR = {2009},
	NUMBER = {5-6},
	PAGES = {406--413},
}

@article {KLPR,
	AUTHOR = {Kribs, David W. and Levick, Jeremy and Pereira, Rajesh and
	Rahaman, Mizanur},
	TITLE = {\href{https://doi.org/10.1088/1751-8121/ad2cb0}{Operator algebra generalization of a theorem of {W}atrous and
	mixed unitary quantum channels}},
	JOURNAL = {J. Phys. A},
	VOLUME = {57},
	YEAR = {2024},
	NUMBER = {11},
	PAGES = {Paper No. 115303, 25pp},
}

@article {R,
	AUTHOR = {Ruskai, Mary Beth},
	TITLE = {\href{https://doi.org/10.1142/S0129055X03001710}{Qubit entanglement breaking channels}},
	JOURNAL = {Rev. Math. Phys.},
	VOLUME = {15},
	YEAR = {2003},
	NUMBER = {6},
	PAGES = {643--662},
}

@article {HSR,
	AUTHOR = {Horodecki, Michael and Shor, Peter W. and Ruskai, Mary Beth},
	TITLE = {\href{https://doi.org/10.1142/S0129055X03001709}{Entanglement breaking channels}},
	JOURNAL = {Rev. Math. Phys.},
	VOLUME = {15},
	YEAR = {2003},
	NUMBER = {6},
	PAGES = {629--641},
}

@article {CK,
	AUTHOR = {Casazza, Peter G. and Kova{\v{c}}evi{\'c}, Jelena},
	TITLE = {\href{https://doi.org/10.1023/A:1021349819855}{Equal-norm tight frames with erasures}},
	JOURNAL = {Adv. Comput. Math.},
	VOLUME = {18},
	YEAR = {2003},
	NUMBER = {2-4},
	PAGES = {387--430},
}

@article {SH,
	AUTHOR = {Strohmer, Thomas and Heath, Jr., Robert W.},
	TITLE = {\href{https://doi.org/10.1016/S1063-5203(03)00023-X}{Grassmannian frames with applications to coding and
	communication}},
	JOURNAL = {Appl. Comput. Harmon. Anal.},
	VOLUME = {14},
	YEAR = {2003},
	NUMBER = {3},
	PAGES = {257--275},
}

@article {STDH,
	AUTHOR = {Sustik, M\'aty\'as A. and Tropp, Joel A. and Dhillon, Inderjit
	S. and Heath, Jr., Robert W.},
	TITLE = {\href{https://doi.org/10.1016/j.laa.2007.05.043}{On the existence of equiangular tight frames}},
	JOURNAL = {Linear Algebra Appl.},
	VOLUME = {426},
	YEAR = {2007},
	NUMBER = {2-3},
	PAGES = {619--635},
}

@article {S,
	AUTHOR = {Scott, A. J. },
	TITLE = {\href{https://doi.org/10.48550/arXiv.1703.03993}{SICs: Extending the list of solutions}},
	PAGES = {Preprint (2017)},
}

@article {W,
	AUTHOR = {Welch, L. },
	TITLE = {\href{https://doi.org/10.1109/TIT.1974.1055219}{Lower bounds on the maximum cross correlation of signals}},
	JOURNAL = {IEEE Trans. Inform. Theory},
	VOLUME = {20},
	YEAR = {1974},
	NUMBER = {3},
	PAGES = {397 - 399},
}

@article {XZG,
	AUTHOR = {Xia, Pengfei and Zhou, Shengli and Giannakis, Georgios B.},
	TITLE = {\href{https://doi.org/10.1109/TIT.2005.846411}{Achieving the {W}elch bound with difference sets}},
	JOURNAL = {IEEE Trans. Inform. Theory},
	VOLUME = {51},
	YEAR = {2005},
	NUMBER = {5},
	PAGES = {1900--1907},
}

@article {DK,
	AUTHOR = {Kribs, David W.},
	TITLE = {\href{https://doi.org/10.1017/S0013091501000980}{Quantum channels, wavelets, dilations and representations of $O_n$}},
	JOURNAL = {Proc. Edinb. Math. Soc. (2)},
	VOLUME = {46},
	YEAR = {2003},
	NUMBER = {2},
	PAGES = {421--433},
}

@article {EvaDE77,
    AUTHOR = {Evans, D. E.},
     TITLE = {\href{http://projecteuclid.org/euclid.cmp/1103900873}{Irreducible quantum dynamical semigroups}},
   JOURNAL = {Comm. Math. Phys.},
     VOLUME = {54},
      YEAR = {1977},
    NUMBER = {3},
     PAGES = {293--297},
}

@article {EvaDE:HoeR78,
    AUTHOR = {Evans, D. E. and H{\o}egh-Krohn, R.},
     TITLE = {\href{https://doi.org/10.1112/jlms/s2-17.2.345}{Spectral properties of positive maps on {$C\sp*$}-algebras}},
   JOURNAL = {J. London Math. Soc.},
    VOLUME = {17},
      YEAR = {1978},
    NUMBER = {2},
     PAGES = {345--355},
}

@article {FriA77,
    AUTHOR = {Frigerio, A.},
     TITLE = {\href{https://doi.org/10.1007/BF00398571}{Quantum dynamical semigroups and approach to equilibrium}},
   JOURNAL = {Lett. Math. Phys.},
    VOLUME = {2},
      YEAR = {1977},
    NUMBER = {2},
     PAGES = {79--87},
}

@article {BKT,
	AUTHOR = {Bhat, B. V. Rajarama and Kar, Samir and Talwar, Bharat},
	TITLE = {\href{https://doi.org/10.1016/j.laa.2023.08.020}{Peripherally automorphic unital completely positive maps}},
	JOURNAL = {Linear Algebra Appl.},
	VOLUME = {678},
	YEAR = {2023},
	PAGES = {191--205},
}

@article {BKT2,
	AUTHOR = {Bhat, B. V. Rajarama and Kar, Samir and Talwar, Bharat},
	TITLE = {\href{https://doi.org/10.1007/s11856-025-2830-2}{Peripheral Poisson boundary}},
	JOURNAL = {Isr. J. Math.},
	VOLUME = {273},
	YEAR = {2026},
	PAGES = {233–256},
}

@article {MR,
	AUTHOR = {Rahaman, Mizanur},
	TITLE = {\href{https://doi.org/10.1088/1751-8121/aa7b57}{Multiplicative properties of quantum channels}},
	JOURNAL = {J. Phys. A},
	VOLUME = {50},
	YEAR = {2017},
	NUMBER = {34},
	PAGES = {345302, 26},
}

@article {GKS,
	AUTHOR = {Gorini, Vittorio and Kossakowski, Andrzej and Sudarshan, E. C.
	G.},
	TITLE = {\href{https://doi.org/10.1063/1.522979}{Completely positive dynamical semigroups of {$N$}-level
	systems}},
	JOURNAL = {J. Math. Phys.},
	VOLUME = {17},
	YEAR = {1976},
	NUMBER = {5},
	PAGES = {821--825},
}

@article {L,
	AUTHOR = {Lindblad, G.},
	TITLE = {\href{http://projecteuclid.org/euclid.cmp/1103899849}{On the generators of quantum dynamical semigroups}},
	JOURNAL = {Comm. Math. Phys.},
	VOLUME = {48},
	YEAR = {1976},
	NUMBER = {2},
	PAGES = {119--130},
}

@article {SPWC,
	AUTHOR = {Sanz, Mikel and P\'erez-Garc\'ia, David and Wolf, Michael M.
	and Cirac, Juan I.},
	TITLE = {\href{https://doi.org/10.1109/TIT.2010.2054552}{A quantum version of {W}ielandt's inequality}},
	JOURNAL = {IEEE Trans. Inform. Theory},
	VOLUME = {56},
	YEAR = {2010},
	NUMBER = {9},
	PAGES = {4668--4673}
}

@article {RD,
	AUTHOR = {Rouz\'e, Cambyse and Datta, Nilanjana},
	TITLE = {\href{https://doi.org/10.1063/1.5023210}{Concentration of quantum states from quantum functional and
	transportation cost inequalities}},
	JOURNAL = {J. Math. Phys.},
	VOLUME = {60},
	YEAR = {2019},
	NUMBER = {1},
	PAGES = {012202, 22pp},
}

@article {BCGPY,
	AUTHOR = {Burgarth, D. and Chiribella, G. and Giovannetti, V. and
	Perinotti, P. and Yuasa, K.},
	TITLE = {\href{https://doi.org/10.1088/1367-2630/15/7/073045}{Ergodic and mixing quantum channels in finite dimensions}},
	JOURNAL = {New J. Phys.},
	VOLUME = {15},
	YEAR = {2013},
	PAGES = {073045, 33pp},
}

@book {Wo,
	AUTHOR = {Wolf, M. M.},
	TITLE = {\href{https://mediatum.ub.tum.de/node?id=1701036}{Quantum Channels \& Operations: A Guided Tour}},
	PUBLISHER = {Lecture Notes, TUM},
	YEAR = {2012},
}

@article {HRS,
	AUTHOR = {Hanson, Eric P. and Rouz{\'e}, Cambyse and Stilck Fran{\c{c}}a,
	Daniel},
	TITLE = {\href{https://doi.org/10.1007/s00023-020-00906-4}{Eventually entanglement breaking {M}arkovian dynamics:
	structure and characteristic times}},
	JOURNAL = {Ann. Henri Poincar{\'e}},
	VOLUME = {21},
	YEAR = {2020},
	NUMBER = {5},
	PAGES = {1517--1571},
}

@article {LG,
	AUTHOR = {Lami, L. and Giovannetti, V.},
	TITLE = {\href{https://doi.org/10.1063/1.4942495}{Entanglement-saving channels}},
	JOURNAL = {J. Math. Phys.},
	VOLUME = {57},
	YEAR = {2016},
	NUMBER = {3},
	PAGES = {Paper No. 032201, 34pp},
}

@article{RusMB:JunM:DavK:HayP:AndW12,
  title={\href{https://www.birs.ca/workshops/2012/12w5084/report12w5084.pdf}{Operator structures in quantum information theory}},
  author={Ruskai, Mary Beth and Junge, Marius and Kribs, David and Hayden, Patrick and Winter, Andreas},
  journal={Final Report, Banff International Research Station},
  year={2012}
}

@book {RocRT70,
    AUTHOR = {Rockafellar, R. Tyrrell},
     TITLE = {Convex analysis},
    SERIES = {Princeton Mathematical Series},
    VOLUME = {No. 28},
 PUBLISHER = {Princeton University Press, Princeton, NJ},
      YEAR = {1970},
}

@article {BA,
	AUTHOR = {Bhat, B. V. Rajarama and Dias, Astrid Swizell},
	TITLE = {\href{https://doi.org/10.1007/s12044-026-00877-2}{Peripheral Poisson boundary: extensions and examples}},
	JOURNAL = {Proc. Indian Acad. Sci. (Math. Sci.)},
	VOLUME = {136},
	YEAR = {2026},
	PAGES = {Article ID 0028, 21pp},
}

\end{document}